\newtheorem{thm}{Theorem}[section]
\newtheorem{cor}[thm]{Corollary}
\newtheorem{lem}[thm]{Lemma}
\newtheorem{prop}[thm]{Proposition}
\theoremstyle{definition}
\newtheorem{defin}[thm]{Definition}
\theoremstyle{remark}
\newtheorem{remark}[thm]{Remark}
\newtheorem{ex}[thm]{Example}
\newtheorem{claim}[thm]{Claim}
\newtheorem{question}[thm]{Question}
\newtheorem{problem}[thm]{Problem}
\numberwithin{equation}{section}
\newcommand{\delete}[1]{} 
\newcommand{\nt}{\noindent}
\newcommand{\sk}{\vskip 0.2cm}
\newcommand{\ben}{\begin{enumerate}}
\newcommand{\een}{\end{enumerate}}
\newcommand{\bit}{\begin{itemize}}
\newcommand{\eit}{\end{itemize}}
\def\R {{\mathbb R}}
\def\N {{\mathbb N}}
\def\Z {{\mathbb Z}}
\def\Q {{\mathbb Q}}
\def\T {{\mathbb T}}
\def\GL{\operatorname{GL}}
\def\E{{\mathrm{E}}}
\def\Aut{{\mathrm Aut}\,}
\def\Homeo{{\mathrm{Homeo}}\,}
\def\diam{{\mathrm{diam}}}
\def\E{{\mathcal E}}
\def\WAP{\operatorname{WAP}}
\def\Tame{\operatorname{Tame}}
\newcommand{\eva}{\rm{eva}}
\def\QED{\nobreak\quad\ifmmode\roman{Q.E.D.}\else{\rm Q.E.D.}\fi}
\newcommand{\al}{\alpha}
\newcommand{\Ga}{\Gamma}
\newcommand{\ga}{\gamma}
\newcommand{\del}{\delta}
\newcommand{\Del}{\Delta}
\newcommand{\ep}{\epsilon}
\newcommand{\sig}{\sigma}
\newcommand{\La}{\Lambda}
\newcommand{\om}{\omega}
\newcommand{\Om}{\Omega}
\newcommand{\br}{\vspace{3 mm}}
\newcommand{\cls}{{\rm{cls\,}}}
\newcommand{\supp}{{\rm{supp\,}}}
\newcommand{\tri}{\bigtriangleup}
\newcommand{\Asym}{\rm{Asym}}
\newcommand{\Id}{\rm{Id}}
\newcommand{\Fcal}{\mathcal{F}}
\newcommand{\Gcal}{\mathcal{G}}
\newcommand{\Ocal}{\mathcal{O}}
\newcommand{\cb}{\mathbf{c}}
\newcommand{\Split}{{\rm{Split}}}
\newcommand{\ch}{\mathbf{1}}
\newcommand{\card}{\rm{card\,}}
\newcommand{\Adh}{{\rm{Adh\,}}}
\newcommand{\osc}{{\rm{osc\,}}}
\newcommand{\inte}{{\rm{int\,}}}
\newtheorem{exa}[thm]{Example}
\newcommand{\rest}{\upharpoonright}
\begin{document}

\title[A hierarchy in the class of tame dynamical systems] 
{Todor\u{c}evi\'{c}' Trichotomy and a hierarchy in the class of tame dynamical systems}

\author[]{Eli Glasner}
\address{Department of Mathematics,
Tel-Aviv University, Ramat Aviv, Israel}
\email{glasner@math.tau.ac.il}
\urladdr{http://www.math.tau.ac.il/$^\sim$glasner}

\author[]{Michael Megrelishvili}
\address{Department of Mathematics,
Bar-Ilan University, 52900 Ramat-Gan, Israel}
\email{megereli@math.biu.ac.il}
\urladdr{http://www.math.biu.ac.il/$^\sim$megereli}

\date{2021, July 7}

\begin{abstract}  Todor\u{c}evi\'{c}' trichotomy in the class of 
separable Rosenthal compacta induces a hierarchy in the class of tame 
(compact, metrizable) 
dynamical systems $(X,T)$ according to the 
topological properties of their enveloping semigroups $E(X)$. 
More precisely, we define the classes
$$
	\mathrm{Tame}_\mathbf{2} \subset \mathrm{Tame}_\mathbf{1} \subset \mathrm{Tame},
$$ 
where $\mathrm{Tame}_\mathbf{1}$ is the proper subclass of tame systems  
with first countable $E(X)$, and $\mathrm{Tame}_\mathbf{2}$ is 
its proper subclass consisting of systems with hereditarily separable $E(X)$. 
We study some general properties of these classes
and exhibit many examples to illustrate these properties.	
\end{abstract} 

\subjclass[2010]{Primary 37Bxx; Secondary 54H15, 54H05, 54F05}

\keywords{almost automorphic system, circular order, enveloping semigroup, linear order, Rosenthal compact, Sturmian system, tame dynamical system}

\thanks{This research was supported by a grant of the Israel Science Foundation (ISF 1194/19)} 

\maketitle

\setcounter{tocdepth}{1}
\tableofcontents

\section*{Introduction}
\sk 

In this work we continue our study of the following theme:

\begin{problem} \label{q:theme} 
Let $T$ be a topological group and $(X,T)$ a compact dynamical system.
	Let $E:=E(X,T)$ be the enveloping semigroup of $(X,T)$. 
	How is the topology of the compact space $E$ related to the dynamical properties of the system $(X,T)$ ?
\end{problem}

For the background and history of research in this direction
see, for example, the survey papers \cite{G-env}, \cite{GM-survey}.  
It is a well known phenomenon that enveloping semigroups of compact metrizable dynamical systems often are nonmetrizable huge compacts. 
Recall that by \cite{G-M-U}, 
$E(X)$ is metrizable if and only if $(X,T)$ is hereditarily nonsensitive (HNS); see below the short explanation in 
Section \ref{s:Prel}. This is a quite restricted class of systems, 
although it contains the weakly almost periodic (WAP) systems. 

A much larger class is that of dynamical systems $(X,T)$ for which
$E(X)$ is a Rosenthal compactum.
It coincides with the class of tame metrizable dynamical systems. 
This fact is a consequence of the 
 dynamical analog of the  Bourgain--Fremlin--Talagrand dichotomy theorem \cite{G-M}
 (see Theorem \ref{f:DynBFT} below). 
 Thus tame systems play a principal role
 when we address the above problem.
 Many remarkable naturally defined systems coming from geometry, 
 analysis and symbolic dynamics are tame but not HNS. 
 
In the present paper we introduce two subclasses of tame metric dynamical systems.

\begin{defin} \label{d:Tame_k} Let $(X,T)$ be a compact metrizable dynamical system 
	and $E(X,T)$ be its (necessarily separable) enveloping semigroup. 
	We say that this system is:
	\ben 
	\item[(1)] tame$_\mathbf{1}$ if $E(X,T)$ is first countable; 
	\item[(2)] tame$_\mathbf{2}$ if $E(X,T)$ is hereditarily separable. 
	\een
	The corresponding classes will be denoted by 
	$\mathrm{Tame}_\mathbf{1}$ and $\mathrm{Tame}_\mathbf{2}$ respectively. 
	As is shown in Proposition \ref{r:incl} below, we have 
	$$
	\mathrm{Tame}_\mathbf{2} \subset \mathrm{Tame}_\mathbf{1} \subset \mathrm{Tame}.
	$$ 
\end{defin}

This hierarchy arises naturally from 
deep results of Todor\u{c}evi\'{c} 
and Argyros--Dodos--Kanellopoulos
about separable Rosenthal compacta.  

\begin{thm} \label{tri} \index{Todor\u{c}evi\'{c}' trichotomy} 
	(Todor\u{c}evi\'{c}' Trichotomy 
	\cite{T}, \cite[Section 4.3.7]{A-D-K})
	Let $K$ be a non-metrizable separable Rosenthal compactum. 
	Then $K$ satisfies exactly one of the following alternatives: 
	\begin{enumerate}
		\item[(0)]
		$K$ is not first countable 
		(it then contains a copy of 
		$A({\bf c})$, the
		Alexandroff compactification of a discrete space of size continuum).
		\item[(1)]
		$K$ is first countable but $K$ is not hereditarily separable 
		(it then  contains 
either a copy of 
		$D(\{0,1\}^\N)$, the \textit{Alexandroff duplicate of
		the Cantor set}, or $\widehat{D}(S(\{0,1\}^\N))$, the \textit{extended duplicate of the split Cantor set}).
		\item[(2)]
		$K$ is hereditarily separable and non-metrizable 
		(it then contains a copy of the split interval).
	\end{enumerate}
\end{thm}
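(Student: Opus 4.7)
The three alternatives are manifestly mutually exclusive: (0), (1), (2) correspond to the three possibilities for the pair of properties (first countable, hereditarily separable), excluding ``metrizable'' which forces both. So the content lies entirely in the existential halves, namely producing a copy of $A(\mathfrak{c})$, one of the two Alexandroff-type duplicates, or the split interval in the appropriate case. My plan would be to treat the three cases separately and to rely throughout on the Bourgain--Fremlin--Talagrand theorem, which guarantees that every separable Rosenthal compactum is sequentially compact and Fr\'echet--Urysohn; this is what makes it reasonable to recover topology from sequences in $K$.

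For alternative (0), pick a point $p\in K$ with no countable local base. By Fr\'echet--Urysohn together with a cardinal argument on the character of $p$, one can find an uncountable subset $D\subset K\setminus\{p\}$ with $p$ in its closure but with no countable $D_0\subset D$ having $p\in\overline{D_0}$. The main task is then to refine $D$ to a set of size $\mathfrak{c}$ which is discrete and for which $\{p\}\cup D$ is homeomorphic to $A(\mathfrak{c})$, i.e.\ every neighbourhood of $p$ omits only finitely many points of $D$. Here is where I would apply Todor\u{c}evi\'{c}'s Ramsey-theoretic analysis of Rosenthal compacta (coding each $x\in D$ by a Baire-1 function $f_x$ on a Polish space and applying a partition theorem to the resulting double sequence) to thin out $D$ to the required ``converging uncountable antichain'' structure.

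For alternative (1), $K$ is first countable but contains an uncountable left- or right-separated subset $L$. Using first countability and the Baire-1 representation, each point of $L$ admits a canonical coding. Applying Todor\u{c}evi\'{c}'s oscillation/minimal-walks machinery to these codes produces a ``duplicate'' structure on some uncountable $L'\subset L$: every $x\in L'$ is approached by a sequence $x_n\to x$ of points in $L'$ from a strictly controlled direction. A further Ramsey step splits into two sub-cases according to whether these approaches are ``one-sided'' or genuinely ``two-sided'' with respect to the natural partial order coming from the Baire-1 coding, yielding respectively $D(\{0,1\}^{\mathbb N})$ or $\widehat{D}(S(\{0,1\}^{\mathbb N}))$ as a subspace. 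This dichotomy step is, to my mind, the main obstacle: extracting a canonical form out of an arbitrary first-countable, non-hereditarily-separable Rosenthal compactum is precisely the content of the Argyros--Dodos--Kanellopoulos refinement.

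For alternative (2), assume $K$ is hereditarily separable and non-metrizable. The hereditary separability combined with the Baire-1 structure allows the construction of a Borel-measurable linear preorder $\leq$ on $K$ (obtained, in the Rosenthal setting, from a natural linear ordering on the coding functions). Non-metrizability forces $\leq$ to be non-separable as a linear order in the order topology sense, and then one builds a continuous order-preserving surjection from $K$ (or a subspace) onto the split interval by a transfinite back-and-forth argument using that $K$ is Fr\'echet--Urysohn. The bulk of the work is packaging the order-theoretic information in a way that survives the passage from Baire-1 functions to the compactum $K$; this is where Todor\u{c}evi\'{c}'s classification of separable, hereditarily separable Rosenthal compacta via the split interval is doing the real work.
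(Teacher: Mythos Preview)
The paper does not prove this theorem; it is quoted from the literature (Todor\u{c}evi\'{c} \cite{T} and Argyros--Dodos--Kanellopoulos \cite[Section 4.3.7]{A-D-K}) and used as a black box to motivate the hierarchy $\mathrm{Tame}_{\mathbf 2}\subset\mathrm{Tame}_{\mathbf 1}\subset\mathrm{Tame}$. So there is no proof in the paper to compare your proposal against.

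As for the proposal itself: what you have written is not a proof but a narrative summary of where the hard work lies, with explicit pointers back to \cite{T} and \cite{A-D-K} at every nontrivial step (``apply Todor\u{c}evi\'{c}'s Ramsey-theoretic analysis'', ``this is precisely the content of the Argyros--Dodos--Kanellopoulos refinement'', ``Todor\u{c}evi\'{c}'s classification \ldots\ is doing the real work''). That is an accurate map of the territory, and your identification of the three cases and of the role of the Bourgain--Fremlin--Talagrand theorem is correct. But none of the actual combinatorics is carried out: in case~(0) you have not explained how to pass from ``no countable subset accumulates at $p$'' to a set whose \emph{only} accumulation point is $p$; in case~(1) the ``oscillation/minimal-walks machinery'' and the dichotomy between $D(\{0,1\}^{\mathbb N})$ and $\widehat D(S(\{0,1\}^{\mathbb N}))$ are named but not executed; and in case~(2) the construction of the linear preorder and the embedding of the split interval are asserted rather than performed. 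If the intent is simply to cite the result, as the paper does, that is fine; if the intent is to supply an independent proof, substantially more would be needed.
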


\br  

By results of R. Pol \cite[Section 4, Theorem 3.3]{P}, 
every hereditarily separable Rosenthal compact space is first countable (see Debs \cite{Debs}).  
(A topological space $K$ is {\em perfectly normal} if it is normal and every closed subset of $K$ is
a $G_\del$ set.)

\begin{thm}\label{Pol}
For a separable Rosenthal compactum $K$ the following conditions are equivalent:
\begin{enumerate}
\item
$K$ is perfectly normal.
\item
$K$ is hereditarily separable.
\item
$K$ contains no discrete subspace of cardinality continuum.
\end{enumerate}
\end{thm}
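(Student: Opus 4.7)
The plan is to exploit Todor\u{c}evi\'{c}'s Trichotomy (Theorem \ref{tri}) together with the observation that the three ``pathological'' spaces $A(\mathbf{c})$, $D(\{0,1\}^\N)$ and $\widehat{D}(S(\{0,1\}^\N))$ appearing in alternatives (0) and (1) simultaneously witness the failure of hereditary separability, the failure of perfect normality, and the existence of a discrete subspace of cardinality $\mathbf{c}$. The implications (2) $\Leftrightarrow$ (3) and (1) $\Rightarrow$ (2) will follow at once from this remark, while (2) $\Rightarrow$ (1) will be the delicate direction and will rest on the Pol--Debs results cited immediately before the statement.

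For (2) $\Rightarrow$ (3): any discrete space of cardinality $\mathbf{c}$ has no countable dense subset and is therefore not separable, contradicting hereditary separability of any containing space. For (3) $\Rightarrow$ (2), assume $K$ is not hereditarily separable; in particular $K$ is non-metrizable, so Theorem \ref{tri} applies and places $K$ in alternative (0) or (1). In alternative (0), $K$ contains $A(\mathbf{c})$, whose isolated points form a discrete subset of cardinality $\mathbf{c}$. In alternative (1), $K$ contains $D(\{0,1\}^\N)$ or $\widehat{D}(S(\{0,1\}^\N))$, and in each case the set of ``duplicates'' is an open discrete subspace of cardinality $\mathbf{c}$. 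Either way (3) fails.

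Next I would establish (1) $\Rightarrow$ (2). Since perfect normality passes to subspaces, it suffices to show that none of the three pathological compacta is perfectly normal. In $A(\mathbf{c})$, any $G_\del$ set containing the point at infinity must contain all but countably many isolated points, so the closed singleton $\{\infty\}$ is not $G_\del$. In $D(\{0,1\}^\N)$, the closed subset $\{0,1\}^\N \times \{0\}$ fails to be $G_\del$: a short compactness argument shows that every closed subset of the isolated part $\{0,1\}^\N \times \{1\}$ is finite, so the isolated part is not $F_\sig$. An analogous argument handles $\widehat{D}(S(\{0,1\}^\N))$. Consequently, if $K$ is perfectly normal, the trichotomy forces $K$ to be metrizable or to lie in alternative (2), so $K$ is hereditarily separable.

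The main obstacle is (2) $\Rightarrow$ (1). A compact Hausdorff space is perfectly normal if and only if it is hereditarily Lindel\"of, so the task is to upgrade hereditary separability to hereditary Lindel\"ofness within the class of separable Rosenthal compacta. This is exactly the content of the Pol--Debs results (\cite[Section 4, Theorem 3.3]{P} together with \cite{Debs}) recalled in the paragraph preceding the statement, which in fact yield first countability and perfect normality of every hereditarily separable Rosenthal compactum. I would invoke this as a black box; any self-contained argument would have to enter the fine structure theory of separable Rosenthal compacta developed by Todor\u{c}evi\'{c} and Argyros--Dodos--Kanellopoulos.
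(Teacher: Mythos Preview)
The paper does not supply a proof of this theorem; it is stated with attribution to Pol (with a pointer to Debs' survey) and used as a black box. So there is no ``paper's own proof'' to compare against.

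Your proposal is internally consistent with how the paper treats the result: you correctly isolate (2) $\Rightarrow$ (1) as the substantive direction and defer it to the Pol--Debs reference, exactly as the paper does. Your arguments for the remaining implications via the trichotomy are sound (the computation that the non-isolated copy of the Cantor set in $D(\{0,1\}^\N)$ fails to be $G_\delta$ is correct), though note this is historically backwards and logically heavy: Pol's 1984 result predates Todor\u{c}evi\'{c}'s trichotomy, and the implications (2) $\Rightarrow$ (3) and (1) $\Rightarrow$ (3) admit direct elementary arguments (a perfectly normal compact space is hereditarily Lindel\"of, hence has no uncountable discrete subspace; an uncountable discrete subspace is non-separable). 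The one place to be a bit more careful is the sentence about $\widehat{D}(S(\{0,1\}^\N))$: ``an analogous argument'' is plausible but this space is less standard, and since you are already invoking the trichotomy you might as well cite the structural facts about it from \cite{T} or \cite{A-D-K} rather than leave it to the reader.
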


We also observe that $A({\bf c})$ is a continuous image of
$D(\{0,1\}^\N)$ 
(we refer to \cite{T, Debs} for the definition and discussion of these spaces).  
Thus a first countable Rosenthal compactum 
can admit a quotient which is not first countable.
We will have a similar situation among our tame classes; namely
a tame$_\mathbf{1}$ system can admit a factor which is not tame$_\mathbf{1}$
(Examples \ref{e:EllisProj},  \ref{e:Akin} 
and Lemma \ref{Y*}). 
Also, as we will see (Proposition \ref{p:AlmostAut}), the product of two tame$_\mathbf{2}$ systems
need not be tame$_\mathbf{2}$.
Note that the class Tame$_\mathbf{2}$ is closed under subsystems and factors.   
The class	Tame$_\mathbf{1}$ is closed under countable products
but not under subsystems (see Examples \ref{sub} below). 

\br    
\subsection{Preliminaries} 
\label{s:Prel} 

Recall that for any topological group $T$ and any \textit{dynamical $T$-system} $X$, defined by a 
continuous homomorphism $j \colon T \to \Homeo(X)$ into the group of homeomorphisms of the compact space $X$,  
the corresponding enveloping semigroup $E(X,T)$  (or just $E(X), E$ when $T$ and $X$ are understood)
was defined by Robert Ellis as the 
pointwise closure of the subgroup $j(T)$ of $\Homeo(X)$ in the product space $X^X$. 
One may easily modify this definition for semigroup actions. 

$E(X,T)$ is a compact right topological semigroup whose algebraic and topological structure often reflects dynamical properties of $(X,T)$. 

Let $\pi\colon X \to Y$ be a factor 
map
of $T$-systems (i.e., 
$\pi(tx)=t \pi(x)$ for all $t \in T$ and  $x \in X$). 
Then
there exists a (unique) continuous surjective semigroup homomorphism
$\pi_* \colon E(X) \to E(Y)$ such that 
$\pi \circ p=\pi_*(p) \circ \pi$ for every $p \in E(X)$
and such that $\pi_*(j_X(t)) = j_Y(t)$ for every $t \in T$.
%

For 
more
information about dynamical 
systems and their enveloping semigroups we refer to \cite{G-env} and \cite{GM-survey}. 
Of special interest for us 
will be the successively larger classes 
of almost periodic (AP), weakly almost periodic (WAP), 
hereditarily nonsensitive (HNS), and tame dynamical systems. 
We include here some information about the class of tame systems. 

\sk
\subsection*{Tame dynamical systems} 
A real valued
bounded 
 continuous function $f \in C(X)$ is said to be \textit{tame} if the family 
$fT:=\{f \circ t: t \in T\}$ of real valued functions has no \textit{independent} infinite sequence 
(in the sense of H. Rosenthal, 
\cite{Ro}); 
we denote by $\Tame(X)$ the collection of these functions.

By \cite{GM-rose} $f$ is tame iff for every $p \in E(X,T)$ the function $f \circ p \colon X \to \R$ has 
the {\em point of continuity property} (PCP in short).  
The system $(X,T)$ is {\em tame} if $\Tame(X)=C(X)$.
A metric dynamical system $(X,T)$ is tame iff 
every element $p$ of the enveloping semigroup $E=E(X,T)$ 
is a limit of a sequence of elements from $T$, \cite{G-M,G-M-U},
iff $p$ is of Baire class 1. 
Another equivalent condition is that 
${card} \; {E}(X)$ is $\leq
2^{\aleph_0}$ (Theorem \ref{f:DynBFT}). 

Tame dynamical systems were introduced by A. K\"{o}hler \cite{Koh} under the 
name \textit{regular systems}. The term ``tame" was proposed later in \cite{G-06}.

Tame dynamical systems form a class of low complexity dynamical systems 
where several remarkable dynamical and topological properties meet.  
This class is quite large and is closed under subsystems, products and factors.  
By the dynamical analog of the Bourgain--Fremlin--Talagrand theorem, a compact 
metrizable dynamical system is tame if and only if its 
(always, compact and separable) enveloping semigroup $E(X,T)$ is a  
Rosenthal compact space. More precisely we have:

\begin{thm} \label{f:DynBFT} 
	\cite{G-M} \emph{(A dynamical version of BFT dichotomy)}
	Let $(X,T)$ be a compact metric dynamical system and let $E$ be its
	enveloping semigroup. 
	Either
	\begin{enumerate}
		\item
		$E$ is a separable Rosenthal compact space (hence $E$ is a Fr\'{e}chet-Urysohn space and ${card} \; {E} \leq
		2^{\aleph_0}$); or
		\item
		$E$ contains a homeomorphic
		copy of $\beta\N$ (hence ${card} \; {E} = 2^{2^{\aleph_0}}$).
	\end{enumerate}
	The first possibility
	holds if and only if $(X,T)$ is a tame system.
\end{thm}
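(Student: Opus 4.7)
The plan is to transfer the classical Bourgain--Fremlin--Talagrand (BFT) dichotomy, applied coordinate-by-coordinate to orbits of test functions in $C(X)$, to the enveloping semigroup $E\subset X^X$. The bridge is the standard fact that, since $X$ is compact metric, any countable family $\{f_n\}_{n\in\mathbb{N}}\subset C(X)$ separating points gives rise to a topological embedding
\[
\Phi \colon E \longrightarrow \prod_{n} \overline{f_n T}^{\,\mathrm{ptwise}}\subset \prod_n \mathbb{R}^X,\qquad p\longmapsto (f_n\circ p)_n,
\]
so properties of $E$ can be read off from properties of the orbit closures $\overline{f_n T}^{\,\mathrm{ptwise}}$.

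For each individual orbit $fT$ with $f\in C(X)$, Rosenthal's $\ell^{1}$-theorem together with the classical BFT dichotomy yields two alternatives: either (a) $fT$ has no infinite independent subsequence, in which case $\overline{fT}^{\,\mathrm{ptwise}}$ is a Rosenthal compactum (its elements are Baire class one), hence separable, Fr\'echet--Urysohn, and of cardinality at most $2^{\aleph_0}$; or (b) $fT$ contains an infinite independent sequence, which spans an $\ell^1$-copy in $C(X)$ and whose pointwise closure carries a topological copy of $\beta\mathbb{N}$. By definition, $(X,T)$ is tame precisely when (a) holds for every $f\in C(X)$.

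Combining these two steps, if $(X,T)$ is tame then $\Phi$ embeds $E$ into a countable product of Rosenthal compacta. Each element of $E$ is a map $X\to X$ whose coordinates $f_n\circ p$ are Baire class one, so, via the embedding $X\hookrightarrow[0,1]^{\mathbb{N}}$ induced by $\{f_n\}$, the map $p$ itself is of Baire class one; separability of $E$ follows from its being the pointwise closure of $j(T)$ inside the second countable space obtained by restricting the target of $\Phi$ to the relevant coordinates. This delivers the first alternative together with all the asserted properties. In the non-tame case one picks $f\in C(X)$ with an independent sequence $\{f\circ t_k\}$; the associated sequence $\{t_k\}\subset T\subset E$ is topologically discrete, and for each ultrafilter $\mathcal{U}$ on $\mathbb{N}$ the pointwise limit $p_{\mathcal{U}}=\lim_{\mathcal{U}} t_k$ lies in $E$. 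A Rosenthal-style argument, using independence of $\{f\circ t_k\}$, forces $p_{\mathcal{U}}\neq p_{\mathcal{V}}$ whenever $\mathcal{U}\neq\mathcal{V}$, producing a homeomorphic copy of $\beta\mathbb{N}$ in $E$ and hence the cardinality bound $|E|=2^{2^{\aleph_0}}$. The two alternatives are exclusive because a Rosenthal compactum has cardinality at most $2^{\aleph_0}$, whereas $\beta\mathbb{N}$ does not.

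The main obstacle, and the genuine content of BFT in this setting, is the last step: extracting a concrete copy of $\beta\mathbb{N}$ inside $E$ from an abstract independent sequence in $fT$. The remaining ingredients are largely bookkeeping: verifying that $\Phi$ is a homeomorphism onto its image, that the Baire class one property is preserved under the countable product $\prod_n \mathbb{R}^X$ and under composition with the inclusion $X\hookrightarrow[0,1]^{\mathbb{N}}$, and that separability of $E$ survives even when $T$ itself need not be metrizable (exploiting that a countable dense subset of $T$ already yields a dense subset of $\Phi(E)$).
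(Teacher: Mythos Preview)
The paper does not supply its own proof of this theorem; it is quoted verbatim from \cite{G-M} and used as a black box throughout. So there is no in-paper argument to compare against. Your outline is essentially the standard route taken in \cite{G-M}: reduce to orbit closures $\overline{fT}$ in $\mathbb{R}^X$ via a separating sequence in $C(X)$, invoke Rosenthal's $\ell^1$-theorem and the classical BFT dichotomy on each orbit, and then transport the conclusion back to $E$.

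Two places in your sketch are softer than they look. First, your separability argument is not quite right as written: the ambient space $\prod_n \mathbb{R}^X$ is not second countable when $X$ is uncountable, and you are not given that $T$ itself is separable. The clean fix is to note that $\Homeo(X)$ with the compact-open topology is a Polish group (since $X$ is compact metric), so $j(T)\subset\Homeo(X)$ is separable there, hence separable in the coarser pointwise topology, and $E=\overline{j(T)}$ inherits a countable dense subset. Second, the assertion that independence of $\{f\circ t_k\}$ ``forces $p_{\mathcal U}\neq p_{\mathcal V}$'' for distinct ultrafilters is not immediate from the finite-intersection definition of independence; this is really where the hard analysis of BFT lives. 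One either appeals to the full BFT theorem to get $\beta\mathbb{N}\hookrightarrow\overline{\{f\circ t_k\}}$ and then lifts to $E$ using that $\beta\mathbb{N}$ is extremally disconnected (hence projective among compact Hausdorff spaces, so the surjection $\overline{\{t_k\}}\to\overline{\{f\circ t_k\}}$ admits a section), or one reproduces the relevant part of the BFT proof. Either way, more than a ``Rosenthal-style argument'' is needed at that step. With these two points tightened, the outline is correct.
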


The metrizable tame systems are exactly those systems which admit 
a representation on a separable Rosenthal Banach space \cite{GM-rose}
(a Banach space is called \emph{Rosenthal} if it does not contain an isomorphic copy of $l_1$). 
An important subclass of the class of Rosenthal Banach spaces is the class of Asplund Banach spaces. 
(A Banach space $V$ is {\em Asplund} iff its dual has the Radon-Nikod\'ym property  
iff the dual of every separable linear subspace of $V$ is separable.
Reflexive spaces and spaces of the type $c_0(\Gamma)$ are Asplund,
and every Asplund space is Rosenthal.) 
We denote by RN the class of {\em  Radon--Nikod\'ym dynamical systems} (see \cite{G-M}). 
A (metrizable) dynamical system is 
said to be 
 RN if 
 it is representable on a (resp., separable) Asplund Banach space. 

Any RN system $X$ is \textit{hereditarily non-sensitive} (HNS) and any HNS system is tame.  
A metric system is RN iff it is 
HNS.  
It was shown in \cite{G-M-U} that a metric system $(X,T)$ is RN iff its enveloping semigroup $E(X,T)$ is metrizable.

Any continuous topological group action on a dendrite is tame, \cite{GM-dendr}. Circularly (in particular, linearly) ordered dynamical $T$-systems are RN, hence tame, \cite{GM-c}.

\sk 
Yet another characterization of tameness, 
of combinatorial nature via the notion of independence tuples, is due to Kerr and Li \cite{K-L}. 
For more information and references about tame systems see \cite{GM-tLN,GM-survey}. 

\sk 

For 
the
definitions of WAP and AP see Remark \ref{r:determ}. 
For the definition and basic properties of HNS we refer to \cite{G-M, G-M-U}. 
The following  inclusions hold in general 
$$
\mathrm{AP} \subset \WAP \subset \mathrm{HNS} \subset \Tame.
$$

\sk

\begin{prop} \label{r:incl} 
	For metrizable systems
	we have the following inclusions
	$$
	\mathrm{RN}=\mathrm{HNS} \subset \mathrm{Tame}_\mathbf{2} \subset \mathrm{Tame}_\mathbf{1} \subset \Tame.
	$$
\end{prop}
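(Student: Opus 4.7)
The plan is to verify each inclusion using results already collected in Section \ref{s:Prel}. The equality $\mathrm{RN}=\mathrm{HNS}$ in the metric case is recalled above as a theorem from \cite{G-M, G-M-U}. For $\mathrm{HNS}\subset \mathrm{Tame}_\mathbf{2}$, I would invoke the same reference: when $(X,T)$ is metric HNS, the enveloping semigroup $E(X,T)$ is compact metrizable, hence second countable, hence automatically hereditarily separable. Thus $(X,T)\in \mathrm{Tame}_\mathbf{2}$.

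For the rightmost inclusion $\mathrm{Tame}_\mathbf{1}\subset \Tame$ I would use the dynamical BFT dichotomy (Theorem \ref{f:DynBFT}) to rule out the $\beta\N$ alternative. If $E(X,T)$ contained a homeomorphic copy of $\beta\N$, it could not be first countable, since by Arhangelskii's bound a first countable compact Hausdorff space has cardinality at most $2^{\aleph_0}$, while $|\beta\N|=2^{2^{\aleph_0}}$. Hence first countability of $E(X,T)$ forces $(X,T)\in\Tame$. The same strategy, using the standard fact that $\beta\N$ contains an uncountable discrete subspace (so is not hereditarily separable), shows that any tame$_\mathbf{2}$ system is tame as well.

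For the middle inclusion $\mathrm{Tame}_\mathbf{2}\subset \mathrm{Tame}_\mathbf{1}$: by the preceding step a tame$_\mathbf{2}$ system is tame, so $E(X,T)$ is a separable Rosenthal compactum. The Pol--Debs theorem stated just before Theorem \ref{Pol} --- every hereditarily separable Rosenthal compact space is first countable --- then immediately gives that $E(X,T)$ is first countable, placing $(X,T)$ in $\mathrm{Tame}_\mathbf{1}$. The only genuinely nontrivial input in the whole argument is this Pol--Debs result, used as a black box; everything else reduces to Theorem \ref{f:DynBFT} together with the two standard pathologies of $\beta\N$.
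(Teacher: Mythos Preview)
Your proof is correct and follows essentially the same approach as the paper: both arguments reduce the inclusions to the dynamical BFT dichotomy (Theorem \ref{f:DynBFT}) and then invoke Pol's theorem (Theorem \ref{Pol}) for $\mathrm{Tame}_\mathbf{2}\subset\mathrm{Tame}_\mathbf{1}$. The only minor difference is that for $\mathrm{Tame}_\mathbf{1}\subset\Tame$ the paper uses the more elementary observation that first countable spaces are Fr\'{e}chet--Urysohn (so $E$ cannot contain $\beta\N$), whereas you appeal to Arhangelskii's cardinality bound---both work, but the paper's route avoids the extra black box.
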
 

\begin{proof}	
The dynamical BFT dichotomy (Theorem  \ref{f:DynBFT}) implies that tame$_\mathbf{1}$ and 
tame$_{\bf 2}$ systems are tame 
(hence, $E(X,T)$ is a Rosenthal compactum in both cases). 
In fact, a tame$_\mathbf{1}$ system is tame because a first countable space is Fr\'{e}chet-Urysohn. 
Finally, the above mentioned result of R. Pol 
(Theorem \ref{Pol}) implies that 
$\mathrm{Tame}_\mathbf{2} \subset  \mathrm{Tame}_\mathbf{1}$.  	
\end{proof}

\sk
When a metric system 
is tame$_\mathbf{2}$ we have the following detailed information.
This is a version of a 
theorem from \cite[Proposition 15.1]{G-M},
which in turn is based on results of S. Todor\u{c}evi\'{c}, (\cite[Theorem 3]{T}):

\begin{thm}\label{Todor-cor}
Let $X$ be a metric tame, point transitive $T$-system, where $T$ is 
an arbitrary topological group. 

Then 
\begin{enumerate}
\item
Either $E(X,T)$ contains an uncountable discrete subspace 
(i.e. it is not tame$_\mathbf{2}$) or,
\item
There exists a metric tame system $(Z,T)$ and a factor map $Z \to X$ such that
$E(X,T) = E(Z,T)$ and such that, with $\pi \colon E(X,T) \to Z$, the evaluation map
$p \mapsto pz_0$, where $z_0 \in Z$ is any transitive point, we have
$$
|\pi^{-1}(z)| \leq 2,\quad \forall z \in Z.
$$ 
\end{enumerate}
\end{thm}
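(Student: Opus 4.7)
The plan is to apply Todor\u{c}evi\'{c}'s trichotomy to the enveloping semigroup and, in the ``hereditarily separable'' branch, use the structural part of Todor\u{c}evi\'{c}'s theorem to extract the split-interval-like refinement $Z$.

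\textbf{Step 1 (Dichotomy).} Since $(X,T)$ is metric and tame, Theorem \ref{f:DynBFT} yields that $E := E(X,T)$ is a separable Rosenthal compactum. Apply Theorem \ref{Pol}: either $E$ contains an uncountable discrete subspace, which is precisely alternative (1) of the present theorem, or $E$ is hereditarily separable. In the first case we are done, so we focus on the second.

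\textbf{Step 2 (Construction of $Z$).} Assume $E$ is hereditarily separable. The key ingredient is the structural content behind case (2) of Theorem \ref{tri} (essentially \cite[Theorem 3]{T}): every hereditarily separable separable Rosenthal compactum $K$ admits, via a canonical countable family of functions separating its ``twin pairs'', a continuous at-most-two-to-one surjection onto a metric compactum which collapses each twin pair to a single point. I would apply this construction to $E$, regarded as a $T$-system under the left-translation action of $j(T)$. The separating family used in Todor\u{c}evi\'{c}'s proof can be chosen $T$-invariant (or symmetrized to become so), which makes the resulting quotient $\pi \colon E \to Z$ $T$-equivariant. Using the hereditarily separable structure and the ordering provided by Todor\u{c}evi\'{c}'s analysis of separable Rosenthal compacta, one arranges that $\pi$ refines the evaluation $\pi_0 \colon E \to X$, $p \mapsto p z_0$, in the sense that $\pi(p) = \pi(q)$ implies $\pi_0(p) = \pi_0(q)$. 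This produces a metric compact $T$-system $Z$, a $T$-equivariant factor map $\bar{\pi} \colon Z \to X$ with $\pi_0 = \bar{\pi} \circ \pi$, and a continuous $T$-equivariant surjection $\pi \colon E \to Z$ with $|\pi^{-1}(z)| \leq 2$ for every $z \in Z$.

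\textbf{Step 3 (Identification and verification).} Setting $z_0 := \pi(e)$ with $e = j(1_T) \in E$, the orbit $T z_0 = \pi(j(T))$ is dense in $\pi(E) = Z$, so $z_0$ is transitive. Both $\pi$ and the map $p \mapsto p z_0$ agree on the dense subset $j(T) \subseteq E$ (sending $j(t)$ to $\pi(j(t)) = j(t) z_0$); since the right-multiplication action of $E$ on $Z$ by $p \mapsto p z$ is continuous in $p$ on $Z$ (standard Ellis theory once $Z$ is a $T$-factor of $E$), the two maps coincide on all of $E$. The equality $E(Z,T) = E(X,T)$ is then a formal Ellis-theoretic consequence: the canonical surjective semigroup maps $E(X) \to E(Z)$ induced by the factor $Z \to X$ and $E \to E(Z)$ induced by evaluation compose to the identity of $E(X)$, forcing the identifications.

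\textbf{Main obstacle.} The hard part is Step 2: producing the at-most-two-to-one quotient in a $T$-equivariant manner which simultaneously refines $\pi_0$. This is exactly the content of \cite[Proposition 15.1]{G-M}, which in turn rests on \cite[Theorem 3]{T}; once granted, Steps 1 and 3 are routine.
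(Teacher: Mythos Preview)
Your Step~1 matches the paper, and Step~3 has the right shape (though note that a factor map $Z \to X$ induces a surjection $E(Z) \to E(X)$, not the reverse; the identification $E(Z,T) = E(X,T)$ comes from sandwiching $Z$ between $X$ and $E(X,T)$ together with $E(E(X,T),T) = E(X,T)$). The real gap is in Step~2.

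You assert that \cite[Proposition 15.1]{G-M} delivers an at-most-two-to-one equivariant quotient $\pi \colon E \to Z$ which \emph{simultaneously refines} $\pi_0 \colon E \to X$. It does not. That result (built on \cite[Theorem~3]{T}) only produces a metric $T$-factor $Y$ of $E(X,T)$ together with an at-most-two-to-one equivariant map $\phi \colon E(X,T) \to Y$; the system $Y$ carries no a priori relation to $X$, and your appeal to ``the ordering provided by Todor\u{c}evi\'{c}'s analysis'' does not manufacture one. The paper closes this gap with a one-line join: take transitive points $x_0 \in X$ and $y_0 \in Y$ and set $Z = \overline{T(x_0,y_0)} \subset X \times Y$. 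Then $Z$ is metric and tame, the first-coordinate projection $Z \to X$ is the required factor map, and the evaluation $p \mapsto p z_0 = (p x_0,\, \phi(p))$ from $E(X,T)$ onto $Z$ is at most two-to-one because its fibers are contained in the fibers of $\phi$. In short, the refinement you need is not extracted from inside Todor\u{c}evi\'{c}'s construction at all; it is obtained \emph{after the fact} by taking the orbit closure in the product with $X$.
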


\begin{proof}
Suppose that $E(X,T)$ does not contain an uncountable discrete subspace.
Then there exists, by \cite[Theorem 15.1]{G-M}
a factor map $\phi \colon E(X,T) \to Y$ for some metric system $(Y,T)$
such that $|\phi^{-1}(y)| \leq 2$ for every $y \in Y$.
Let $x_0 \in X$ and $y_0 \in Y$ be transitive points and set $z_0 =(x_0, y_0)$ and 
$Z = \overline{T(x_0, y_0)}$. Then the system $(Z,T)$ is metric and tame with
$E(X,T) \to Z \to X$, such that
$E(Z,T) = E(X,T)$, 
and the evaluation map $\pi \colon p \mapsto p(x_0, y_0)$
from $E(X,T)  \to Z$ to is at most $2$ to $1$.
\end{proof}

\sk

\begin{remark}
Dynamical systems with the group $\Z$ of integers as the acting group are sometimes called {\em cascades}.
Often when dealing with cascades we change
 our notation and denote our system as $(X,T)$, 
where here the letter  $T$ denotes
the homeomorphism of $X$ which corresponds to $1 \in \Z$.
\end{remark}

\sk 
\begin{remark} \label{r:LOCM} \ 
	\begin{enumerate}
		\item Note that every linearly ordered compact separable space $X$ is homeomorphic to a special linearly ordered space $K_A$ which can be obtained using 
a splitting points construction. More precisely, by a result of Ostaszewski (see \cite{Ost} and its reformulation \cite[Result 1.1]{Mar}) for $X$ there exist: a closed subset $K \subset [0,1]$ and a subset $A \subset K$ such that $K_A=(K \times \{0\} \cup (A \times \{1\}))$ is endowed with the corresponding lexicographic order inherited from $K \times \{0,1\}$. Every splitting point is \textit{singular} (see definition in Section \ref{s:Order}). Hence, Lemma \ref{l:sing} implies that $K_A$ is metrizable if and only $A$ is countable. 
	
		\item 
		The splitting point construction can be generalized in several directions. Among others for \textit{circularly ordered} compact spaces. In \cite{GM-c,GM-int} we use such circularly ordered versions of Ostaszewski type spaces $K_A$ (which, as in \cite{GM-c}, 
we denote by $\Split(K; A)$) 
the space which we get after splitting points of $A\subset K$ in the circularly ordered space $K$. 
We use such examples also in the present paper. In particular, in Proposition \ref{p:AlmostAut} we have the space $\T_A$, with $A:=\{n \alpha: n \in \Z\}$ and the double circle $\T_{\T}$ (see also Corollary \ref{cor:St}).
	\end{enumerate}	
\end{remark}

\sk
\subsection*{Some 
highlights
of the present work}

\begin{enumerate}
	\item In Theorem \ref{t:1-1} we give a sufficient condition for
the tame$_\mathbf{1}$ property for almost one-to-one extensions $X \to Y$ of 
a
tame$_\mathbf{1}$ system
$Y$. 
This implies, in particular, that $X$ is tame$_\mathbf{1}$ for the following symbolic systems: (a) Tribonacci 3-letter substitution system; (b) Arnoux-Rouzy substitutions; (c) Brun substitution; (d) Jacobi-Perron substitution.
\item 	
By Theorem \ref{t:Floyd} the Floyd minimal set 
is tame$_\mathbf{1}$.

	\item By Corollary \ref{grom} The action of a hyperbolic group $\Ga$ on its \textit{Gromov boundary} $\partial \Ga$ is tame but not tame$_\mathbf{1}$. 
	
	\item Proposition \ref{p:AlmostAut} shows that there exists a tame$_\mathbf{1}$ almost automorphic $\Z$-system which is not tame$_\mathbf{2}$. 
	
	\item By Theorem \ref{t:LinAreTame} 
	every linear action $\GL(n,\R) \times \R^n \to \R^n$ 
	is tame but, for $n \geq 2$ it is not tame$_\mathbf{1}$.   
	\item 
As we 
have shown in \cite{GM-c}
circularly ordered $T$-dynamical systems are tame. 
This implies that many Sturmian like symbolic dynamical systems are tame. 
Now we can say more: 
Here we show that Sturmian like systems are tame$_\mathbf{2}$, Corollary \ref{cor:St}. 
And every linearly ordered metric system is tame$_\mathbf{1}$, Theorem \ref{t:LOT1}.
\item  
By Proposition \ref{ex:T},  
the
circularly ordered system $(\T,H_+(\T))$ is tame but not tame$_\mathbf{1}$, where $H_+(\T)$ 
is 
the Polish topological group of all c-order preserving homeomorphisms of the circle $\T$. Note also that 
the
``circular analog of Helly's space" $M_+(\T,\T)$ 
(which is a separable Rosenthal compactum) is not first countable 
(see Remark \ref{r:c-Helly}).
\end{enumerate}

\br 
We thank Gabriel Debs, Jan van Mill, and Roman Pol for helpful (e-mail) discussions.
We thank the anonymous referee for his many helpful suggestions.

\br
\section{$G_\del$-points in enveloping semigroups $E(X,T)$}
\sk

We recall the following well known result.

\begin{lem} \label{Gdel} \cite[3.1.F]{Eng} 
Let $X$ be a compact space. A point $x_0 \in X$ is  a $G_\del$ point if and only if 
there is a countable basis for the topology at $x_0$.
\end{lem}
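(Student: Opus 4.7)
The plan is to prove both implications, noting that throughout we rely on $X$ being compact Hausdorff (so in particular normal).

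For the easy direction ($\Leftarrow$), I would assume that $\{U_n\}_{n \in \N}$ is a countable local basis at $x_0$, and show that $\{x_0\} = \bigcap_n U_n$. The inclusion $\subseteq$ is trivial. For $\supseteq$, if $y \neq x_0$ then by Hausdorffness there is an open neighborhood $V$ of $x_0$ with $y \notin V$, and by the local-basis property some $U_n \subseteq V$, whence $y \notin U_n$. This exhibits $\{x_0\}$ as a countable intersection of open sets, so $x_0$ is a $G_\delta$ point.

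For the harder direction ($\Rightarrow$), I would start from $\{x_0\} = \bigcap_n V_n$ with each $V_n$ open, and construct a countable local basis $\{W_n\}$. The first step is to shrink the $V_n$ using normality of $X$: for each $n$, choose an open set $U_n$ with $x_0 \in U_n \subseteq \overline{U_n} \subseteq V_n$. Then set $W_n = U_1 \cap \cdots \cap U_n$, a decreasing sequence of open neighborhoods of $x_0$ whose closures satisfy
$$
\overline{W_n} \subseteq \overline{U_1} \cap \cdots \cap \overline{U_n} \subseteq V_1 \cap \cdots \cap V_n,
$$
so that $\bigcap_n \overline{W_n} = \{x_0\}$.

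The final step, where compactness enters essentially, is to verify that $\{W_n\}$ is a local basis. Given any open neighborhood $U$ of $x_0$, consider the decreasing family of closed subsets $F_n := \overline{W_n} \cap (X \setminus U)$ of the compact space $X$. Since $\bigcap_n F_n \subseteq \bigcap_n \overline{W_n} \setminus \{x_0\} = \emptyset$, the finite intersection property forces $F_{n_0} = \emptyset$ for some $n_0$, i.e., $\overline{W_{n_0}} \subseteq U$ and in particular $W_{n_0} \subseteq U$. The main technical point, which I view as the crux, is precisely this interplay: one cannot argue with the original $V_n$ directly, because without passing to the shrunk closures $\overline{W_n}$ there is no compact closed set to which the finite intersection property can be applied.
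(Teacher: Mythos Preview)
Your proof is correct. Note, however, that the paper does not actually give a proof of this lemma: it is stated as a ``well known result'' with a citation to Engelking's \emph{General Topology} \cite[3.1.F]{Eng}, and the paper moves on immediately to the next proposition. So there is nothing to compare against; your argument is the standard textbook proof (use regularity/normality to shrink the $G_\delta$ representation to one with nested closures, then use compactness via the finite intersection property to show the resulting sequence is a local base), and it is exactly what one would expect to find behind the Engelking reference.
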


The following useful result is a generalization of \cite[Proposition 2]{Bour}.

\begin{prop}\label{countable}
	Let $X$ be a set, $(Y,d)$  a metric space, 
	and $E \subset Y^X$ a compact subspace in the pointwise topology. 
	The following conditions are equivalent:
	\begin{enumerate}
		\item a point $p \in E$ admits a countable local basis in $E$;  
		\item there is a countable set $C \subset X$ which determines $p$
		in $E$,  
		meaning that for 
		any given $q \in E$, the condition $q(c) =p(c)$ for all $c \in C$ implies that $q(x)=p(x)$ for every $x \in X$.
	\end{enumerate} 
\end{prop}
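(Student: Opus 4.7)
The plan is to establish the two implications separately, exploiting the fact that the topology on $E \subset Y^X$ is the restriction of the pointwise-convergence topology and is therefore Hausdorff.

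For (1)$\Rightarrow$(2), I would first reduce to a local basis of a convenient shape. Any neighborhood of $p$ in $E$ contains a basic pointwise-open set of the form
\[
W(F,\ep) := \{q \in E : d(q(x),p(x)) < \ep \text{ for all } x \in F\},
\]
where $F \subset X$ is finite and $\ep > 0$. Hence, given any countable local basis $\{V_n\}_{n \in \N}$ of $p$, I can choose for each $n$ a basic neighborhood $W(F_n,\ep_n) \subset V_n$ containing $p$; the family $\{W(F_n,\ep_n)\}_{n\in\N}$ is still a countable local basis at $p$. Then I would set $C := \bigcup_{n \in \N} F_n$, a countable subset of $X$. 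If $q \in E$ agrees with $p$ on $C$, then $q \in W(F_n,\ep_n)$ for every $n$. Since $E$ is Hausdorff, the intersection of any local basis at $p$ equals $\{p\}$, and thus $q = p$, proving that $C$ determines $p$.

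For (2)$\Rightarrow$(1), enumerate the determining set as $C = \{c_i : i \in \N\}$ and introduce the countable family of open neighborhoods of $p$
\[
W_{n,k} := \{q \in E : d(q(c_i),p(c_i)) < 1/k \text{ for } 1 \leq i \leq n\}, \qquad n,k \in \N.
\]
I would show these form a local basis at $p$. Suppose for contradiction that some neighborhood $V$ of $p$ in $E$ contains no $W_{n,k}$. Then for each pair $(n,k)$ I can pick $q_{n,k} \in W_{n,k} \setminus V$, producing a net in the compact set $E \setminus V$ (indexed by $\N \times \N$ with the product order). By compactness, it has a subnet converging pointwise to some $q \in E \setminus V$. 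For each fixed $i$, eventually $n \geq i$ and $k \to \infty$, so $d(q_{n,k}(c_i),p(c_i)) < 1/k \to 0$; hence $q(c_i) = p(c_i)$ for every $i$. Condition (2) then forces $q = p$, which contradicts $p \in V$ and $q \in E \setminus V$.

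The only delicate point is the net argument in (2)$\Rightarrow$(1): one must ensure that passing to a subnet still gives $q(c_i) = p(c_i)$ for every $i$, which is where the specific index set $\N \times \N$ with product order is used so that for each $i$ the indices eventually satisfy $n \geq i$ and $k$ is arbitrarily large. Everything else, including the reduction to basic open sets and the Hausdorff argument $\bigcap_n V_n = \{p\}$, is a routine consequence of the pointwise topology on $Y^X$.
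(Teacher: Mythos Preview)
Your proposal is correct, and the implication (1)$\Rightarrow$(2) is exactly the paper's argument: refine a countable local basis to basic pointwise neighborhoods and take the union of the finite index sets.

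For (2)$\Rightarrow$(1) you and the paper diverge. The paper simply notes that if $C$ determines $p$ then $\{p\}=\bigcap_{n,k}W_{n,k}$, so $p$ is a $G_\delta$ point of the compact Hausdorff space $E$, and then invokes the general fact (Lemma~\ref{Gdel}, quoted from Engelking) that in a compact space a $G_\delta$ point admits a countable local base. Your direct net/compactness argument is essentially an inline proof of that lemma in this particular setting; it is slightly longer but self-contained and avoids the external reference. One minor point: for $E\setminus V$ to be compact you should take $V$ open, which is harmless since any neighborhood of $p$ contains an open one.
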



\begin{proof}
	We note that the following collection of subsets  forms 
	a basis for the topology at a point $p \in E$:
	\begin{equation}\label{basis}
	V(x_1, x_2, \dots, x_n; \ep) =
	\{q \in E : d(px_i, qx_i) < \ep; \  1\leq i \leq n\},
	\end{equation}
	with $\{x_1, x_2, \dots, x_n\}$ a finite subset of $X$ and $\ep >0$.

	Assuming that $p$ admits a countable basis for its topology, there is then also a basis
	for this topology whose elements of the form (\ref{basis}), and the union 
	of all those finite sets that appear in this representation clearly forms a countable set 
	$C \subset X$ which determines $p$.
	
	Conversely, when $p$ is determined by its values on a countable subset $C \subset X$,
	it is clearly a $G_\del$ point of $E$ and thus, by Lemma \ref{Gdel} it admits a countable base for its topology.
	\end{proof}

\begin{remark} \label{r:determ} \ 
	\begin{enumerate}
		\item Recall that a $T$-system $X$ is WAP (weakly almost periodic) if and only if every 
		$p \in E(X,T)$, as a function $p \colon X \to X$, is continuous 
		\cite{E-N}. 
		Clearly then for a metric WAP system
		each countable  dense subset $D \subset X$ determines every $p \in E(X,T)$. 
		\item A system $Y$ is AP (= equicontinuous) if and only every $p \in E(X)$ is a homeomorphism of $X$. In this case $E(X,T)$ is a compact topological group. 
		If, in addition, the action of $T$ on $X$ is
		 point transitive (i.e., has a dense orbit) then $X$ is a coset space $E(X,T) / H$ for some closed subgroup $H$ of $E(X,T)$. 
When $T$ is abelian, $H$ is trivial and $X$ is identified with the topological compact group 
$E(X,T)$. In this case each element $p \in E(X,T)$ is completely determined 
by its value at any single point $x$ of $X$. 
	\end{enumerate}
	
\end{remark}

\begin{remark} 
Note that if one can choose a countable subset $C \subset X$, as in Proposition \ref{countable},
which is independent of $p$, then the map $p \mapsto (px)_{x \in C} \in X^C$,
is a 
topological embedding, 
and it follows that $E(X)$ is metrizable. Thus, in view of \cite{G-M-U},
the existence of such a set is a 
sufficient condition for a metric $(X,T)$ to be a HNS system.
(It is also necessary, because cylindrical sets in $E(X) \subset X^X$
form a basis for the topology, so that when $E(X)$ is a metric space,
this basis admits a countable subfamily which is also a basis.)
\end{remark}

\sk   
By a theorem of Bourgain \cite{Bour}, in every Rosenthal compactum $K$ the set of $G_\del$-points is dense.
As remarked by Debs,
since any non empty $G_\del$ subset of $K$ contains a non empty compact 
$G_\del$ subset, 
it follows from Bourgain's result that the set of all $G_\del$-points of a 
Rosenthal compactum is actually non meager. 
[In fact, let $D \subset X$ be the collection of $G_\del$-points in $X$.
If $D$ is meager it is contained in a union $\bigcup_{n \in \N} K_n$ where each 
$K_n$ is closed with empty interior. Now 
$$
X \setminus \bigcup_{n \in \N} K_n
= \bigcap_{n \in \N} K_n^c
$$ 
is a $G_\del$ subset of $X$, hence contains a compact
$G_\del$ set which, in turn, contains a $G_\del$ point by the proof of Bourgain's theorem].

Debs remarks in \cite{Debs} however that the question whether  the set of all $G_\del$-points of a Rosenthal 
compactum $K$ is comeager in $K$ is still open.

\sk

\begin{question} \label{pr:G-delta} 
	For a metric tame system $(X,T)$
is the set of all $G_\del$-points of $E(X,T)$ 
comeager ?
\end{question}


\sk 
In order to formulate a related question we need some notations. 

\sk 

Let $I$ be the unit interval, $\Om = I^I, \Sigma = \{0,1\}^I$. Let
$$
\Sigma_c = \{\sigma \in \Sigma : |\supp(\sigma)| \leq \aleph_0\}.
$$
For $p \in \Om$ and $C \subset I$ let 
$$
\Om_C(p) = \{q \in \Om : q \rest C = p \rest C\}.
$$  

\sk 

Now, suppose $A \subset \Om$ is a closed subset. Consider the sets:
\begin{gather*}
	\Gcal = \{(p, C) : \Om_C(p) \cap A = \{p\}\} \subset A \times \Sigma_c,\\
	G = \{p \in A : \exists C \in \Sigma_c, \ \Om_C(p) \cap A = \{p\}\} = \pi_1(\Gcal),
\end{gather*}
where 
elements of $\Sigma_c$ are identified with their supports and
$\pi_1$ is the projection on $\Om$. 
So $G$ is a kind of an `analytic' set. 

\sk

\nt {\bf Question}:
Is  $G$ a Baire set;
i.e. of the form $U \tri M$ for $U \subset \Om$ open and $M \subset \Om$ meager ?

\sk 
In a private communication Jan van Mill have shown that in general the answer to this
question may be negative.
However, in our case we have further information concerning the set $A$ that might be relevant.
The compact set $A \subset \Omega$ is 
a separable Rosenthal compactum (so in particular a Fr\'{e}chet-Urysohn space). 
Thus by Bourgain's theorem it has a dense (and non-meager) subset of $G_\delta$ points.

Note that if we know that the set of $G_\del$ points in a separable compactum $K$ is a Baire set,
then it is comeager.

\begin{question} \label{pr:RosComp=E} 
Among the class of separable Rosenthal compacta, which members are homeomorphic to
$E(X,T)$ 
(or $\Adh(X,T)$, see Section
\ref{wr})
for some system $(X,T)$ (with $(X,T)$ metric, minimal, $T$ abelian, $T = \Z$ etc.) ?
\end{question}

\br 
\section{When is an almost automorphic system tame ?}
\label{s:AlmostAut} 
\sk 

Almost automorphic dynamical system were first studied by Veech in \cite{V65}.
Since then they play a central role in various aspects of the theory of dynamical systems.

\begin{defin} \label{d:str1-1} Let $\pi \colon (X,T) \to (Y,T)$ be a factor map of 
metric minimal dynamical systems. 
	\begin{enumerate}
		\item $\pi$ is said to be an \emph{almost one-to-one extension} if 
		the set 
		$$
		X_0:=\{x \in X: \ \pi^{-1}(\pi(x))= \{x\}\}
		$$ 
		is a dense $G_\del$ subset of $X$.  
		When $\pi$ is an almost one-to-one extension we let
		$$
		Y_0 = \pi(X_0) = \{y \in Y : |\pi^{-1}(y)| = 1\}.
		$$
		When the set $Y \setminus Y_0$ is countable we say that $\pi$ is of 
		{\em type c},
		and we say that $\pi$ is of {\em type cc} when $X \setminus X_0$ is countable.
		\footnote{In \cite{GM-tLN} we used the term `strongly almost one-to-one' for the
		type cc property.}
		\item  
		$(X,T)$ 
		is \textit{almost automorphic} if there exists an almost one-to-one extension $\pi \colon (X,T) \to (Y,T)$ with an equicontinuous system $(Y,T)$; 
		we denote by AA the class of almost automorphic systems.
	\item 	
We say that an almost automorphic system $(X,T)$ is:
\begin{itemize}
	\item [(a)] of {\em type} AA$_c$ when 
	the corresponding set $Y \setminus Y_0$ is countable; 
	\item [(b)] of {\em type} AA$_{cc}$ when
	$X \setminus X_0$ is countable.
\end{itemize} 
\end{enumerate}	
Thus, we have
$$
{\rm AA}_{cc} \subset {\rm AA}_c \subset {\rm AA}.
$$ 
\end{defin}

\br  
As was shown in \cite[Corollary 5.4.(2)] {G-18},  a minimal metric tame system $(X,T)$ 
which admits an invariant
measure is almost automorphic and moreover, the invariant measure is unique and the 
almost one-to-one map $\pi \colon X \to Y$, from $X$ onto its maximal equicontinuous factor,
is an {\em isomorphic extension}, i.e. $\pi$
is a measure theoretical isomorphism when $Y$ is equipped with its Haar measure.
It was shown recently in 
\cite[Corollary 3.7]{FGJO} that a tame AA system is necessarily regular
(i.e. the unique invariant measure is supported on the set $X_0$ of singleton fibers).
Of course there are many examples of AA systems which are not tame.
For example there are  almost automorphic systems having more than one invariant measure,
or having positive entropy, and
in \cite[Theorem 3.11, Corollary 3.13]{F-K} there are examples of regular AA systems 
which are not tame. 
 
\sk

\begin{prop}\label{str-1-1}
Let $(X,T)$ be an AA$_{cc}$ system with $T$ abelian.
Then $E(X,T)$ is first countable (hence $(X,T)$ is tame$_\mathbf{1}$).
\end{prop}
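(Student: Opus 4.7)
My plan is to invoke Proposition \ref{countable}: it suffices, for each $p \in E(X,T)$, to exhibit a countable set $C \subset X$ that determines $p$ in $E(X,T)$.

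The key ingredient is the structure of $(Y,T)$. Since $T$ is abelian and $(Y,T)$ is minimal equicontinuous, $E(Y,T)$ coincides with the compact abelian topological group $Y$, acting on itself by translation (Remark \ref{r:determ}). The induced continuous surjective homomorphism $\pi_* \colon E(X,T) \to E(Y,T) = Y$ attaches to each $p \in E(X,T)$ an element $y_p := \pi_*(p) \in Y$ satisfying $\pi(p(x)) = y_p + \pi(x)$ for every $x \in X$. Moreover $X\setminus X_0 = \pi^{-1}(Y\setminus Y_0)$, so the standing hypothesis that $X\setminus X_0$ is countable forces $Y\setminus Y_0 = \pi(X\setminus X_0)$ to be countable as well.

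Given $p$, I would fix a countable dense subset $D \subset X$ and set
$$
C \;:=\; D \;\cup\; (X\setminus X_0) \;\cup\; \pi^{-1}\bigl((Y\setminus Y_0) - y_p\bigr).
$$
The first two pieces are countable by construction; the third is $\pi^{-1}$ of a countable subset of $Y$, and each of its fibres is either a singleton (if the base point lies in $Y_0$) or contained in the countable set $X\setminus X_0$ (if the base point lies in $Y\setminus Y_0$). Hence $C$ is countable. To verify that $p|_C = q|_C$ forces $p = q$, first use $p|_D = q|_D$: this gives $y_p + \pi(x) = y_q + \pi(x)$ for $x \in D$, and since $\pi(D)$ is dense in the topological group $Y$, one obtains $y_p = y_q$. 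Now fix an arbitrary $x \in X$ and split into cases: if $x \in X \setminus X_0$ then $x \in C$ and $p(x)=q(x)$ directly; if $x \in X_0$ and $\pi(x) + y_p \in Y\setminus Y_0$, then $x \in \pi^{-1}((Y\setminus Y_0) - y_p) \subset C$ and again $p(x)=q(x)$; finally, if $x \in X_0$ and $\pi(x) + y_p \in Y_0$, then $\pi^{-1}(\pi(x)+y_p)$ is a single point containing both $p(x)$ and $q(x)$, so $p(x)=q(x)$ once more. Thus $C$ determines $p$.

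The main subtle point — and the place where the hypothesis AA$_{cc}$ is essential rather than merely AA$_c$ — is the countability of the third piece of $C$: had $X\setminus X_0$ been uncountable, the pull-back $\pi^{-1}((Y\setminus Y_0) - y_p)$ could fail to be countable and the construction would break down. Proposition \ref{countable} then yields a countable local basis at $p$; since $p$ was arbitrary, $E(X,T)$ is first countable, and the tameness of $(X,T)$ (so that Tame$_\mathbf{1}$ applies) follows automatically from Theorem \ref{f:DynBFT}, since a first countable enveloping semigroup cannot contain a copy of $\beta\N$.
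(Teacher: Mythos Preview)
Your proof is correct and takes essentially the same approach as the paper: both invoke Proposition~\ref{countable} via the group structure $E(Y,T)\cong Y$ (Remark~\ref{r:determ}(2)), and your third piece $\pi^{-1}\bigl((Y\setminus Y_0) - y_p\bigr)$ is precisely the paper's determining set $C_p = \{x \in X : px \in X\setminus X_0\}$. The extra pieces $D$ and $X\setminus X_0$ you include are harmless redundancies (a single point already forces $y_p=y_q$, and the case $x\in X\setminus X_0$ with $px\in X_0$ is covered by your third case anyway), so your write-up is simply a more explicit version of the paper's terse argument.
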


\begin{proof}
Note first that for a minimal equicontinuous system $(Y,T)$
with $T$ abelian, an element $p \in E(Y,T)$ is
completely determined by its value at any single point $y$ of $Y$ 
(Remark \ref{r:determ} (2)). 
Now let $\pi \colon X \to Y$ be the largest equicontinuous factor of $(X,T)$ and
let $X_0 = \{x \in X : \pi^{-1}(\pi(x)) = \{x\}\}$. By assumption the set $C = X \setminus X_0$ is countable.
Given $p \in E(X,T)$, consider the set $C_p = \{ x \in X : px \in C\}$.
This is at most countable, and taking into account the above remark,
if we know the restriction of $p \in E(X,T)$ to $C_p$, we know $p$.
Now apply Proposition \ref{countable}.
\end{proof}

\sk 
A more general statement is as follows:


\begin{thm} \label{t:1-1}  
Let $\pi \colon  (X,T) \to (Y,T)$ be an almost one-to-one extension of type 
cc,
and let $\pi_* \colon E(X) \to E(Y)$ be the induced surjective homomorphism.
Suppose $p \in E(X)$ satisfies:
\begin{itemize}
	\item [(a)] $E(Y)$ is first countable at $\pi_*(p)$;  
	\item [(b)] $\pi_*(p)^{-1}(y)$ is at most countable for every $y \in Y$.
\end{itemize}

Then $E(X,T)$ is first countable at $p$. Furthermore, if the conditions above (a), (b) 
are true for every $p \in E(X)$ then $X$ is tame$_\mathbf{1}$. 
\end{thm}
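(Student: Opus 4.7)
The plan is to apply Proposition \ref{countable} in the $X$-coordinate, i.e.\ to exhibit a countable set $C \subset X$ which determines $p$ in $E(X,T)$. Once the local statement is established at each $p \in E(X)$ satisfying (a) and (b), the ``furthermore'' clause follows at once: first countability at every point makes $E(X,T)$ first countable, and since tame$_\mathbf{1}$ is by definition this property for metric systems, we obtain $X \in \mathrm{Tame}_\mathbf{1}$.

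Set $q := \pi_*(p) \in E(Y,T)$. By hypothesis (a) and Proposition \ref{countable} applied on the $Y$-side, there is a countable set $C_Y \subset Y$ which determines $q$ in $E(Y,T)$. Pick one preimage $x_y \in \pi^{-1}(y)$ for each $y \in C_Y$, and let $C_0 := \{x_y : y \in C_Y\}$; this is countable with $\pi(C_0) = C_Y$. Whenever $p' \in E(X,T)$ agrees with $p$ on $C_0$, the intertwining identity $\pi \circ p' = \pi_*(p') \circ \pi$ gives
\[
\pi_*(p')(y) \;=\; \pi(p'(x_y)) \;=\; \pi(p(x_y)) \;=\; q(y) \qquad \text{for every } y \in C_Y,
\]
and since $C_Y$ determines $q$, we conclude $\pi_*(p') = q$ on all of $Y$.

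Next we absorb the exceptional fibers. One checks directly from the definitions that $Y \setminus Y_0 = \pi(X \setminus X_0)$, so the \emph{type cc} hypothesis forces $Y^\star := Y \setminus Y_0$ to be at most countable. Let $N := q^{-1}(Y^\star) \subset Y$; by hypothesis (b) it is a countable union of at most countable sets, hence at most countable. Moreover, for $y \in Y^\star$ one has $\pi^{-1}(y) \subset X \setminus X_0$ (every point in the fiber has a non-singleton $\pi$-fiber, namely $\pi^{-1}(y)$ itself), so $\pi^{-1}(y)$ is at most countable; therefore $\pi^{-1}(N)$ is at most countable. Set
\[
C \;:=\; C_0 \,\cup\, \pi^{-1}(N) \;\subset\; X.
\]

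To conclude the verification, suppose $p' \in E(X,T)$ agrees with $p$ on $C$. By the second paragraph $\pi_*(p') = q$, so for every $x \in X$ both $p(x)$ and $p'(x)$ lie in the common fiber $\pi^{-1}(q(\pi(x)))$. If $q(\pi(x)) \in Y_0$ this fiber is a singleton and hence $p'(x) = p(x)$ automatically; otherwise $\pi(x) \in N$, so $x \in \pi^{-1}(N) \subset C$ and the equality $p'(x) = p(x)$ is given. Thus $p' = p$, and Proposition \ref{countable} supplies a countable local basis of $E(X,T)$ at $p$. The only delicate point is exactly the countability of $\pi^{-1}(N)$, which is where the \emph{type cc} assumption meets hypothesis (b); the rest is routine bookkeeping along the factor map.
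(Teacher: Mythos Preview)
Your proof is correct and follows essentially the same strategy as the paper's: both apply Proposition \ref{countable} by taking the union of (i) a countable lift of a determining set for $\pi_*(p)$ in $E(Y)$ and (ii) the set $\pi^{-1}\bigl(\pi_*(p)^{-1}(Y\setminus Y_0)\bigr)$, and both verify countability of the second set using the type cc hypothesis together with condition (b). Your $C_0$ and $\pi^{-1}(N)$ are exactly the paper's sets $A$ and $B$, and the two-step verification (agreement on $C_0$ forces $\pi_*(p')=\pi_*(p)$; agreement on $\pi^{-1}(N)$ then forces $p'=p$) is the paper's Claim (i)--(ii).
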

\begin{proof} In order to prove that $E(X)$ is first countable at $p$, 
according to Proposition \ref{countable}, 
we have to show that there exists a countable subset $C(p)$ 
which ``determines" $p$. That is, $p'(x)=p(x)$ for every 
$x \in C(p)$ and $p' \in E(X,T)$ implies $p=p'$.  
	
	For our $p \in E(X)$ choose first 
	a countable subset $C(\pi_{*}(p))$ of $Y$ which ``determines" $\pi_{*}(p)$ in $E(Y)$ 
	(such a set exists by Proposition \ref{countable} because $E(Y)$ is first countable at $\pi_*(p)$). Fix a countable subset $A$ of 
	$X$
	such that $\pi(A)=C(\pi_*(p))$. 
	
	By Definition \ref{d:str1-1} the set $X \setminus X_0$ is countable, where 
	$X_0: =\{x \in X| \ \pi^{-1}(\pi(x))=\{x\}\}$. Set $Y_0:=\pi(X_0)$.  
	Define now 
	$$B:=\pi^{-1} (\pi_*(p)^{-1}(Y \setminus Y_0))).$$ 
	Since $Y \setminus Y_0$ is countable 
	then $\pi_*(p)^{-1}(Y \setminus Y_0))$ is countable (use (b)). 
	Since $\pi$ is of type cc (Definition \ref{d:str1-1}(1)) it follows that the preimage $\pi^{-1}(y)$ is countable for every $y \in Y$. Hence, $B$ is also continuous. 
	Now we claim that the countable set 
	$$C(p) := A \cup B$$
	determines $p$. It is enough to verify the following 
	
	\sk 
   \nt \textbf{Claim.} 
	\textit{Let $p', p \in E(X)$. 
	\begin{itemize}
		\item [(i)] If $p'(a)=p(a) \ \forall a \in A$. Then $\pi_*(p')=\pi_*(p)$. 
		\item [(ii)] Assume that $\pi_*(p')=\pi_*(p)$. If $p'(x)=p(x) \ \forall x \in B$ then $p'=p$. 
	\end{itemize} }
	\sk 
	\nt \textit{Proof of} (i): note that 
	$\pi p'(a) = \pi p(a)  \ \forall a \in A$. Therefore, $\pi_*(p')(\pi(a))=\pi_*(p)(\pi(a))$ \ $\forall a \in A$. So, 
	$\pi_*(p')(y)=\pi_*(p)(y) \ \forall y \in C(\pi_*(p))$. This means that $\pi_*(p')=\pi_*(p)$. 
	
	\sk 
	\nt \textit{Proof of }(ii): it is enough to show that for every $x \notin B$ we have 
	$p'(x)=p(x)$. 
	
	So let $x \notin B$. Then 
	$\pi_*(p) (\pi(x)) \in Y_0$. By definition of $X_0$ there exists unique element $z \in X$ such that $\pi(z) = \pi_*(p) (\pi(x))$. Since $\pi p(x)=\pi_*(p) \pi(x)$ we have  $p(x)=z$. Moreover, since $\pi_*(p')=\pi_*(p)$ we have also  $p'(x) = z$. So, $p'=p$. 
\end{proof}

\sk  
\begin{remark} \label{r:corollaries} \ 
	\begin{enumerate}
		\item The dynamical system $(X,T)$ from 
		Theorem \ref{t:1-1}, in particular, is tame (being tame$_{\bf 1}$, see Proposition \ref{r:incl}). This 
		strengthens \cite[Theorem 6.16]{GM-tLN}.  
		Similarly, Proposition \ref{str-1-1}, in turn, strengthens a result of Huang \cite{H}.  
		
		\item Proposition \ref{str-1-1} together with results of Jolivet \cite[Theorem 3.1.1]{diss:Jolivet}, imply that $E(X)$ is first countable (i.e., $X$ is tame$_\mathbf{1}$) for the following symbolic systems: (a) Tribonacci 3-letter substitution system; (b) Arnoux-Rouzy substitutions; (c) Brun substitution; (d) Jacobi-Perron substitution. 
	\end{enumerate}
	
\end{remark}

\br 
A similar argument will show that the Floyd minimal set \cite{F},
which is AA$_{c}$ but not AA$_{cc}$, is tame$_{\bf 1}$. 
For more details on this dynamical system
see \cite{Au} and \cite{H-J}.

\begin{thm} \label{t:Floyd} 
	The Floyd minimal set, which is
	AA$_c$ but not AA$_{cc}$,  is tame$_\mathbf{1}$. 
\end{thm}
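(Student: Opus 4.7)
The plan is to adapt the proof of Theorem \ref{t:1-1}: for each $p \in E(X,T)$ we construct a countable subset $C(p) \subset X$ that determines $p$ in the sense of Proposition \ref{countable}, thereby showing that $E(X,T)$ is first countable at $p$. Let $\pi \colon X \to Y$ be the almost one-to-one factor onto the maximal equicontinuous factor. Since $Y$ is a metric equicontinuous minimal system, $E(Y,T)$ is a compact metric topological group and so is first countable; Proposition \ref{countable} applied in $E(Y,T)$ then yields a countable $A \subset X$ whose image $\pi(A)$ determines $\pi_*(p)$ in $E(Y,T)$. Since $(X,T)$ is AA$_c$ the set $Y \setminus Y_0$ is countable, and because $\pi_*(p)$ is a homeomorphism of $Y$, the set
\[
X_{\mathrm{amb}} \;:=\; X_0 \cap \pi^{-1}\bigl(\pi_*(p)^{-1}(Y \setminus Y_0)\bigr)
\]
is also countable (it meets each of the countably many fibres over $\pi_*(p)^{-1}(Y \setminus Y_0)$ in at most one point, since $X_0$ consists of singleton-fibre points).

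The new difficulty beyond Theorem \ref{t:1-1} is that $\pi^{-1}(Y \setminus Y_0)$ is uncountable: in the Floyd construction each bad fibre $F_y := \pi^{-1}(y)$ (for $y \in Y \setminus Y_0$) is a non-degenerate arc. To handle this we use a structural dichotomy for $p \in E(X,T)$ on each such fibre: either $p$ lies in the $T$-orbit in $E(X,T)$ and $p|_{F_y}$ is a homeomorphism onto another arc, or $p$ is a genuine limit point of the orbit and $p|_{F_y}$ is constant. This rests on the Floyd topology: along any non-eventually-constant sequence $T^{n_j}$ the arcs $T^{n_j}(F_y)$ have parameters $y_j \to y^{*} \in Y$, and these inserted arcs collapse in $X$ to a single point---either $y^{*}$ itself when $y^{*} \in Y_0$, or one of the two endpoints of $F_{y^{*}}$ when $y^{*} \in Y \setminus Y_0$.

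Choose a countable dense subset $D_y$ of each arc $F_y$ and set
\[
C(p) \;:=\; A \;\cup\; X_{\mathrm{amb}} \;\cup\; \bigcup_{y \in Y \setminus Y_0} D_y,
\]
a countable set. If $p' \in E(X,T)$ agrees with $p$ on $C(p)$ then agreement on $A$ yields $\pi_*(p') = \pi_*(p)$; for $x \in X_0 \setminus X_{\mathrm{amb}}$ we have $\pi_*(p)(\pi(x)) \in Y_0$, so $p'(x) = p(x)$ is forced by the singleton fibre; agreement on $X_{\mathrm{amb}}$ is direct; and on each $F_y$ the dichotomy combined with density of $D_y$ forces $p'|_{F_y} = p|_{F_y}$, because a constant and a homeomorphism cannot coincide on a dense subset of a non-degenerate arc, while two continuous (respectively constant) maps agreeing on a dense set (respectively at a single point) are equal. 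Hence $p' = p$, and Proposition \ref{countable} gives first countability at $p$. The main obstacle is establishing the continuous-or-constant dichotomy, which depends on the explicit blow-up geometry of the Floyd space (see \cite{Au,H-J}) rather than on the abstract AA$_c$ hypothesis; once this is in hand, the remainder of the argument mirrors the scheme of Theorem \ref{t:1-1}.
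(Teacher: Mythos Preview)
Your overall architecture is sound and close to the paper's: build, for each $p \in E(X,T)$, a countable determining set and invoke Proposition \ref{countable}. The paper also reduces everything to understanding $p$ on the non-degenerate fibres $F_y = \pi^{-1}(y)$ and uses a countable dense subset of $\bigcup_y F_y$. Where you diverge is in the geometric input, and there you have a genuine error.

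Your ``continuous-or-constant dichotomy'' is false. You claim that for $p$ outside the $T$-orbit, $p|_{F_y}$ is constant, justified by the assertion that along any non-eventually-constant sequence $T^{n_j}$ the arcs $T^{n_j}(F_y)$ collapse to a point. They do not: the Floyd system has, for example, infinitely many fibres of length $\tfrac12$, and one can find $p \in \Adh(X,T)$ and $x,x'$ in a common fibre with $px \neq px'$. (This is exactly why the paper notes, just before the theorem, that the Floyd extension is \emph{not} asymptotic.) So your case analysis ``constant vs.\ homeomorphism'' does not exhaust the possibilities, and the step ``a constant and a homeomorphism cannot coincide on a dense subset'' is not the relevant comparison.

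The correct geometric fact, which the paper uses, is that for every $p \in E(X,T)$ the restriction $p|_{F_y}$ is an \emph{affine} map of the segment $F_y$ into the segment $F_{py}$ (or into the singleton $\pi^{-1}(py)$ when $py \in Y_0$). In particular $p|_{F_y}$ is always continuous. With this in hand your argument goes through unchanged: two continuous maps agreeing on the dense set $D_y \subset F_y$ agree on all of $F_y$, and your set $C(p) = A \cup X_{\mathrm{amb}} \cup \bigcup_y D_y$ determines $p$. The paper organises the same ingredients slightly differently, taking $C_p = \{x : px \in C\}$ for a fixed countable $C$ dense in each fibre and using that an affine map on a segment is determined by two values; but once the affine property is established, either packaging works.
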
  
\begin{proof}
	Let $(X,T)$ be the Floyd minimal system, with $\pi : X \to Y$ the almost one-to-one
	factor map onto its largest equicontinuous factor $Y$, which is the $3$-adic adding machine.
	In Figure 1 we recall an idea constructing the  homeomorphism which generates the Floyd's cascade. 
	\begin{figure}[h]
		\begin{center} \label{F1}
			\scalebox{0.7}{\includegraphics{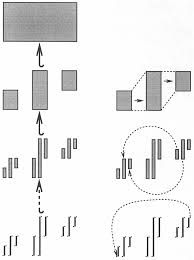}}
			\caption{Floyd system (from JSTOR)}
		\end{center}
	\end{figure}  
	As in \cite{Au} we consider $X$ as a subset of the square $[0,1] \times [0,1]$ in $\R^2$,
	$Y$ as a subset of the unit interval $[0,1]$, and the map $\pi$ as the projection on the first coordinate. 
	We let
	$$
	X_0 = \{x \in X : \pi^{-1}(\pi(x)) = \{x\}\}, \quad Y_0 = \pi(X_0).
	$$
	We know that the set $Y_1 =Y \setminus Y_0$ is countable and that for
	each $y \in Y_1$ the set $A_y = \pi^{-1}(y)$ is a non-degenerate vertical line segment over the point $y$.
	It is not hard to see that for each $p \in E(X,T)$, and such line segment $A_y$, the restriction 
	of $p$ to $A_y$ is an affine map of $A_y$ onto a closed subinterval of $A_{py}$,
	if $py \in Y_1$, and into the singleton $\pi^{-1}(py)$ if $py \in Y_0$.
	As a consequence we see that for $y \in Y_1$, the map
	$p \rest A_y$ is determined by the values of $p$ at any two distinct points of $A_y$.
	Choose a set $C \subset X_1 = X \setminus X_0$ such that for each $y \in Y_1$ we have that
	$C \cap A_y$ is a countable set which is dense in $A_y$. Of coarse $C$ is a countable set.
	
	Now given $p \in E(X,T)$, let $C_p = \{x \in X : px \in  C\}$. 
	We observe that, because $p$ restricted to a non-degenerate vertical segment 
	which is mapped by $p$ onto a non-degenerate vertical segment, is an affine map (hence one-to-one)
	$C_p$ is a countable set.
	We will show that
	$p$ is determined by its values on $C_p$; i.e. if $q \in E(X,T)$ is such that $p \rest C_p = q \rest C_p$
	then $p =q$.
	We first note that $E(Y,T)=Y$ is an abelian group and by 
	Remark \ref{r:determ}.2 any single element of $E(Y)$
	 is completely determined by its value at any single point $y$ of $Y$. 
	 So, $p$ and $q$ have the same image in $E(Y,T)$. 
	It then follows that $px = qx$ for all $x \in X_0$. So we now assume that $x \in X_1 = X \setminus X_0$.
	
	{\bf Case 1:} $px \not\in C$.
	Then $\pi^{-1}(\pi(px)) = \{px\}$ and, since $\pi(px) = \pi(qx)$, we have $qx = px$.
	
	{\bf Case 2:}  $px \in C$; i.e. $x \in C_p$, hence $y = \pi(px) \in Y_1$ and $A_y$ is a non-degenerate interval.
	Let $z = \pi(x)$, which by assumption is in $Y_1$. Thus $A_z$ is a non-degenerate interval containing $x$.
	
	{\bf Case 2a:}  $pA_z = \{px\} \subset A_y$ is a singleton. It then follows that also
	$qA_z = \{px\}$, hence $qx = px$.
	
	{\bf Case 2b:} $pA_z \subset A_y$ is a non-degenerate interval.
	In this case $|pA_z \cap C| = \infty$ and there are distinct points $x_1, x_2 \in A_z$
	so that $px_1$ and $px_2$ are distinct points of $pA_z \cap C$. In particular
	$x_1, x_2 \in C_p$ and by assumption $qx_1 = px_1$ and $qx_2 = px_2$.
	As both $p \rest A_z$ and $q \rest A_z$ are affine maps,
	 it follows that
	$p \rest A_z = q \rest A_z$ and, in particular $qx = px$.
	
	Thus we have shown that indeed $p=q$, and the countable set $C_p$ determines $p$.
	Applying Proposition \ref{countable} we deduce that $E(X,T)$ is first countable,
	and hence also that $(X,T)$ is tame$_\mathbf{1}$.
\end{proof}

\sk 

\begin{ex}
	In a beautiful work  \cite{Auj}, Aujogue gives a complete description of the enveloping semigroup
	of a large class of cut and project $\R^d$ systems called {\em  almost canonical model sets} 
	(which play a major role in mathematical models of quasicristals). 
	These are almost automorphic $\R^d$ actions obtained by the cut and project scheme,
	from a lattice $\Sigma$ in  $\R^n \times \R^d$ with a window $W \subset \R^n$,
	which is a polytope satisfying certain conditions.
	In particular  Aujogue shows that these systems are tame with a first countable enveloping semigroup. 
	See also \cite{ABKL}. 
\end{ex}

The interested reader will find many more examples of tame
and non-tame almost automorphic systems in the recent works
\cite{F-K}, \cite{F-K-Y}.


\br 
\section{Asymptotic extensions}\label{sec-asy}
\sk

Recall that the {\em adherence semigroup} $\Adh(X,T)$ of a dynamical system $(X,T)$ 
is the set of accumulation points of $T$ in $E(X,T)$. It is the
semigroup $E(X,T) \setminus T$ 
iff
$T$ forms an open dense subset of $E(X,T)$,
iff $(X,T)$ is not weakly rigid 
 (see \cite{BGKM} and Section \ref{wr} below).
An extension $\pi \colon (X,T) \to (Y,T)$ of minimal systems is called {\em asymptotic} if 
$R_\pi \subset {\rm ASYM}$, where 
$$
R_\pi =  \{(x, x') \in X \times X : \pi(x) = \pi(x')\},
$$
and 
$$
{\rm ASYM} = \{(x, x') \in X \times X : px = px', \ \forall p \in \Adh(X,T)\}.
$$
By \cite[Theorem 6.29]{AAG} an asymptotic extension is irreducible and, when $X$ is metrizable,
it is an almost one-to-one extension.
If in addition the system $(Y,T)$ is equicontinuous, then $(X,T)$ is almost automorphic.
Note that the Floyd system is not an asymptotic extension of its maximal
equicontinuous factor (it has, for example, infinitely many fibers of length $1/2$).

\begin{thm}\label{asym}
Let $(X,T)$ be a minimal metric system, with $T$ an abelian group.
Suppose further that it is AA$_c$, and that the extension $\pi \colon X \to Y$,
where $Y$ is the largest equicontinuous factor of $X$, is asymptotic. 
Then $X$ is tame$_\mathbf{1}$. 
\end{thm}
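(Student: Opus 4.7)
The plan is to apply Proposition~\ref{countable} and show that for every $p \in E(X,T)$ there is a countable set $C(p) \subset X$ which determines $p$ in $E(X,T)$. Two preliminary observations feed the construction. First, since $T$ is abelian and $(Y,T)$ is minimal equicontinuous, Remark~\ref{r:determ}(2) identifies $E(Y,T)$ with the compact metric abelian group $Y$, so every $\pi_*(p)$ is a bijection of $Y$ uniquely determined by its value at any single point $y_0 := \pi(x_0)$. In particular the set $B(p) := \pi_*(p)^{-1}(Y_1)$ is in bijection with $Y_1 = Y \setminus Y_0$ and is therefore countable by the AA$_c$ hypothesis. Second, asymptoticness of $\pi$ says that every $r \in \Adh(X,T)$ is constant on each fiber $\pi^{-1}(y)$. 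If $Y_1 = \emptyset$, then $\pi$ is one-to-one, $(X,T) = (Y,T)$ is equicontinuous and $E(X,T) = Y$ is metrizable, so one may assume $Y_1 \neq \emptyset$; this in turn implies $T \cap \Adh(X,T) = \emptyset$, since no homeomorphism can be constant on a nontrivial fiber, giving a disjoint decomposition $E(X,T) = T \cup \Adh(X,T)$.

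Fix once and for all: a reference point $x_0 \in X$ with $y_0 := \pi(x_0)$; a point $y_* \in Y_1$ together with two distinct points $a, b \in \pi^{-1}(y_*)$; and, for each $y \in Y_1$, a chosen element $x_y \in \pi^{-1}(y)$, with $x_{y_*} = a$. For $p \in E(X,T)$ define
$$
C(p) \;=\; \{x_0, b\} \;\cup\; \{x_y : y \in Y_1\} \;\cup\; \pi^{-1}\bigl(B(p) \cap Y_0\bigr).
$$
All three pieces are countable (the last one because $B(p) \cap Y_0$ is countable with singleton fibers), so $C(p)$ is countable.

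To verify that $C(p)$ determines $p$, let $q \in E(X,T)$ agree with $p$ on $C(p)$. The equality $q(x_0) = p(x_0)$ gives $\pi_*(q)(y_0) = \pi_*(p)(y_0)$, hence $\pi_*(q) = \pi_*(p)$. For $x \in X_0$ with $\pi(x) \notin B(p)$ the fiber $\pi^{-1}(\pi_*(p)\pi(x))$ is a singleton, so $q(x) = p(x)$; for $x \in X_0$ with $\pi(x) \in B(p)$ one has $x \in \pi^{-1}(B(p) \cap Y_0) \subset C(p)$ and again $q(x) = p(x)$. Thus $q = p$ on the dense set $X_0$. For $x \in X_1 = \pi^{-1}(Y_1)$ the pair $\{a, b\}$ acts as a witness: if $p \in \Adh(X,T)$, asymptoticness gives $p(a) = p(b)$, so $q(a) = q(b)$, which rules out $q \in T$ (a homeomorphism being injective); hence $q \in \Adh(X,T)$ is constant on fibers and $q(x) = q(x_{\pi(x)}) = p(x_{\pi(x)}) = p(x)$. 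If instead $p \in T$, then $p(a) \neq p(b)$, and $q \in \Adh(X,T)$ would give the contradiction $p(a) = q(a) = q(b) = p(b)$; therefore $q \in T$ is continuous and agrees with $p$ on the dense set $X_0$, forcing $q = p$ on $X$.

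The main obstacle is handling the dichotomy $p \in T$ versus $p \in \Adh(X,T)$ uniformly and, in each case, excluding the ``wrong'' type for $q$. The decisive device is the inclusion in $C(p)$ of the two distinct points $a, b$ from a common nontrivial fiber $\pi^{-1}(y_*)$: this is precisely what exploits the fiber-collapsing property of $\Adh(X,T)$ (supplied by asymptoticness) to separate $\Adh(X,T)$ from $T$ inside $E(X,T)$.
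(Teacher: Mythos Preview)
Your proof is correct and follows essentially the same approach as the paper: apply Proposition~\ref{countable} by constructing a countable determining set $C(p)$, using that $\pi_*(p)$ is determined by a single value, that $\Adh(X,T)$ collapses fibers by asymptoticness, and then separating the cases $p \in T$ versus $p \in \Adh(X,T)$. The only minor variation is that you use a single fixed witness pair $\{a,b\}$ in one nontrivial fiber to distinguish $T$ from $\Adh(X,T)$ globally, whereas the paper places two points in every relevant nontrivial fiber; your version is slightly more economical but the underlying idea is the same.
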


\begin{proof}
Let $\pi \colon (X,T) \to (Y,T)$ be the largest equicontinuous factor of $(X,T)$, so that
$\pi$ is an almost one-to-one extension.
By assumption the set
$$
Y_m = \{y \in Y : |\pi^{-1}(y)| > 1\}
$$ 
is at most countable.
Let 
\begin{gather*}
X_m = \pi^{-1}(Y_m),\\ 
X_0 = \{x \in X : \pi^{-1}(\pi(x)) = \{x\}\} = X \setminus X_m,  \ {\text{and}}\\
Y_0 = \pi(X_0) = \{ y \in Y : |\pi^{-1}(y)| =1\} = Y \setminus Y_m.
 \end{gather*}
If $X_0 = X$, the system $(X,T)$ is equicontinuous and $E(X,T) \cong (X,T)$ is metrizable.
So we now assume that $X_0 \subsetneq X$, in which case it is a dense $G_\del$ subset of $X$.
We recall that the map $\pi$ induces a surjective (dynamical and semigroup) homomorphism 
$\pi_* \colon E((X,T) \to E(Y,T)$. In our case, as $(Y,T)$ is minimal, metric, equicontinuous and 
$T$ is abelian,
$E(Y,T) \cong (Y,T)$ is
 a second countable compact abelian topological group.
We have $\pi(px) = \pi_*(p)\pi(x)\ (x \in X, p \in E(X,T))$. For simplicity we will dispense with the symbol $\pi_*$
and write this as $\pi(px) = p\pi(x)$. Note that then, considering $p$ as an element of 
the group $E(Y,T)$, it has an inverse and the notation $p^{-1}y$ for $y \in Y$ makes sense.
Now given $p \in E(X,T)$ choose 
$C(p) \subset \pi^{-1}(p^{-1}Y_m)$ such that 
\begin{gather}\label{12}
\begin{split}
| C(p) \cap   \pi^{-1}(y)| = 1,\quad  {\text {if}}\  y \in p^{-1}Y_m \cap Y_0\\
| C(p) \cap   \pi^{-1}(y)| = 2, \quad {\text {if}}\   y \in p^{-1}Y_m \setminus Y_0.
\end{split}
\end{gather}
We write $C(p) = C_1 \cup C_2$ where the sets $C_1, C_2$ are determined by the conditions in (\ref{12}).

Suppose now that $q \in E(X,T)$ is such that $q \rest C(p) = p \rest C(p)$;
we claim that $p=q$.
Let $x$ be any point in $X$ and let $y = \pi(x)$.
	As $C(p)$ is nonempty our assumption implies that $p$ and $q$ define the same element of the group $E(Y,T)$. Thus if $px \in X_0$ then clearly $px = qx$. 
So we now assume that $px \in X_m$, and it then follows that $x \in \pi^{-1}(p^{-1}Y_m)$.
If $y \in p^{-1}Y_m \cap Y_0$ then $\pi^{-1}(y) = \{x\}$ and we have $qx = px$.
Otherwise, $y \in p^{-1}Y_m \setminus Y_0$ and $\pi^{-1}(y)$ is not a singleton.
We have to consider two cases.

In case $p \in \Adh(X,T)= E(X,T) \setminus T$, then the assumption that $\pi$ is an 
asymptotic extension implies that $p \pi^{-1}(y) = p(\pi^{-1}x)$ is a singleton, say $\{z\}$, and then 
$px = z = px_1 = px_2$, where $\{x_1, x_2\} = C_2 \cap \pi^{-1}(y)$, hence $z = px_1 = qx_1 = px_2 = qx_2$. 
It follows that also $q \in \Adh(X,T) = E(X,T) \setminus T$, hence $qx = px =z$.
Finally if $p \in T$ then $qx_1 = px_1 \not = px_2 = qx_2$, therefore also $q \in T$, whence 
$p=q$.
In view of Proposition \ref{countable} our proof is complete.
\end{proof}

\sk

Now the class of dynamical systems satisfying the conditions of Theorem \ref{asym} is very large.
It contains e.g. all the Sturmian like systems 
and many other semi-cocycle extensions (see subsection \ref{semi} below).
We demonstrate this with the following construction.

\br

\begin{exa}
Here is a simple construction of a minimal cascade $(X,T)$ on a Cantor set
with a factor map $\pi \colon X \to Y$ such that 
$N(y): =\card \pi^{-1}(y) < \omega$ 
for all $y\in Y$ but $N(\cdot)$ is not bounded.

Start with an adding machine (or any other minimal system on a Cantor set),
say $(Y,S)$. Choose a convergent sequence of distinct points
$\{y_n\}_{n=1}^\infty$ in $Y$, with $\lim_{n\to\infty} y_n =y_\infty \not\in \{y_n\}_{n \in \N}$,
such that the orbits $\Ocal_S(y_i), \ i=1,2, \dots,\infty$, are all distinct.
Construct by induction a sequence of disjoint clopen neighborhoods
$V_n \ni y_n$,\  $n< \infty$.
Choose for each $n$ a nesting sequence of clopen sets $B_{n,k}$, neighborhoods of
$y_n$, with $B_{n,1} \subset V_n$ and $\lim_{k\to\infty}{\diam} \ B_{n,k}=0$.
Next define a function $f \colon Y \to [0,1]$ as follows.
For a fixed $n<\infty$ let $f = \frac{j}{n^2}$ on $B_{n,k}\setminus B_{n,k+1}$
where $0 \le j \le n-1$ and $j\equiv k \pmod{n}$. On the rest of the space
put $f=0$. Clearly $f$ is continuous on the complement of $\{y_1,y_2,\dots\}$.
In particular $f$ is continuous at $y_\infty$.

Let $y_0 \in Y$ be a point of continuity of $f$ outside of $\{y_n\}_{n=1}^\infty \cup \{y_\infty\}$,
and let $X$ be the closure of the orbit of $(y_0,\{f(S^j y_0)\}_{-\infty < j < \infty})$
in $Y \times [0,1]^\Z$, where on $[0,1]^\Z$ the action is the shift $\sig$,
and $T$ is defined as the restriction to $X$ of the product $S \times \sig$.

It is not hard to verify that $\pi \colon X \to Y$, the projection on
the first coordinate, has the property ${\card}\pi^{-1}(S^jy_k)= k$, for $k=1,2,\dots$,
$j\in\Z$, and that ${\card}\pi^{-1}(y)= 1$ for any other point $y\in Y$.
As a point transitive almost 1-1 extension of $Y$, the system $(X,T)$ is minimal.

When we start with $(Y,S)$ which is an adding machine, it is easy to check,
that the system $(X,T)$ is tame and that the extension $\pi$ is asymptotic.
We thus obtain a metric tame system $(X,T)$ with $E(X,T)$ first countable.

Similar constructions will yield many other examples of this type with, say,
$|\pi^{-1}(y)| = \aleph_0$ or  $\pi^{-1}(y) = [0,1]$, for some $y \in Y$; or in fact, given any compact separable
space $\Om$, we can build such an example with $\pi^{-1}(y) = \Om$ for some $y \in Y$.
See \cite[Example 5.7]{Y-Z} for such constructions.
\end{exa}

\sk


\begin{remark}
In Section \ref{mi}, Theorem \ref{mli}, we will show that, e.g. when $T$ is abelian,
the condition AA$_c$ suffices to ensure that if $(X,T)$ is a minimal metric tame system,
then the unique minimal left ideal $M$ in $E(X,T)$ is a first countable Rosenthal compactum.
\end{remark}




\br
\subsection{Semi-cocycles}\label{semi}
\sk

Let $T$ be a discrete countable group.
Let $(Y,T)$ be a metric minimal infinite dynamical system and let $K$ be a compact space.
Let $C \subset Y$ be a nonempty finite or countable set. 
Let 
$F \colon X \setminus C \to K$ be  a continuous map.
Such a function is called {\em a semi-cocycle} in 
\cite{D-D}, \cite{D}.
We refer to \cite{D-S} and the recent \cite{F-K} for the theory of 
semi-cocycles, whose roots can be traced to \cite{Fu}.

We further assume that for each $c \in C$ the 
function $F$ can not be extended continuously to $c$,
and that {\bf for every $c \in C$ the set $Tc \cap C$ is finite}. 

We fix a point $y_0 \in X \setminus C$ and set $f(t) = F(ty_0 )$, a function in $\Om = K^T$.
Let $x_0 \in Y \times K^T$ be defined by
$$
x_0(t) = (y_0, F(ty_0)) = (y_0, f(t)), \quad t \in T.
$$
For $t \in T$ let $T_t \colon Y \times \Om  \to Y \times \Om$ be defined by
$$
T_t(y, \om) = (ty, S_t\om), 
$$
where for $\om \in \Om$ the shift homeomorphism $S_t \colon \Om \to \Om$ 
is defined by $S_t\om(s) = \om(st)$.
Let 
$$
X = \cls \Ocal_T(x_0) = \overline{\{T_t x_0 : t \in T\}} \subset  Y \times \Om.
$$
We let $\pi \colon X \to Y$ denote the projection on the $Y$ coordinate.
Clearly $\pi \colon (X,T) \to (Y, T)$ is a homomorphism of dynamical systems.
One can also easily see that $\pi$ is an almost one-to-one extension, with
$\pi^{-1}(\pi(x)) = \{x\}$ for every $x$ with $\pi(x) \not\in TC$.
In fact, for such $x$ and for a sequence $t_i$ such that $\lim t_i y_0 =\pi(x) = y \not \in TC$, we have:
$$
\lim_{i \to \infty} T_{t_i}x_0 = x, \quad \text{with} \quad 
x(t) = (y, F(ty)),\quad (t \in T).
$$
It then follows that the system $(X,T)$ is minimal and almost automorphic of type AA$_c$.

Next fix a point $c \in C$ and let $t_i$ be a sequence in $T$ such that
$x = \lim T_{t_i} x_0$ with $\pi(x) = c$. We then have
$$
x(t)  = \lim_{i \to \infty}( T_{t_i}x_0 )(t) =  \lim_{i \to \infty}(t_iy_0, f(tt_i)) =
 \lim_{i \to \infty} (t_iy_0, F(tt_iy_0)) = (c, k(t)) ,\quad (t \in T)
$$
with  $k(t) =  \lim_{i \to \infty}F(t_iy_0) \in K$.

The element $k(t)$ may depend on $t$ but, by our assumption
that $Tc \cap C$ is finite, we now see that any two points in 
$\pi^{-1}(c)$ may differ in at most finitely many coordinates.
The same of course holds for $\pi^{-1}(tc)$ for every $t \in T$.
Thus the map $\pi$ is asymptotic, so that the conditions 
in  Theorem \ref{asym} are fulfilled.
We have proved the following:

%
\begin{prop}
Let $F$ be a semi-cocycle as above. Then the
map $\pi$ is an asymptotic extension, whence (by Theorem \ref{asym})
the system $(X,T)$ is tame$_\mathbf{1}$.
\end{prop}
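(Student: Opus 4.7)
The strategy is to invoke Theorem~\ref{asym}. The discussion preceding the statement has already done most of the work: it was shown that $\pi \colon X \to Y$ is an almost one-to-one extension of $(Y,T)$ with singleton fibers outside the orbit $TC$, so that $(X,T)$ is minimal almost automorphic of type AA$_c$. It remains to prove that $\pi$ is an \emph{asymptotic} extension; that is, $R_\pi \subset \mathrm{ASYM}$.

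Let $(x, x') \in R_\pi$, write $y = \pi(x) = \pi(x')$, and suppose first that $y \notin TC$. Then $\pi^{-1}(y) = \{x\}$ and there is nothing to prove. So I may assume $y = tc$ for some $t \in T$ and $c \in C$. From the analysis preceding the statement, any two points in $\pi^{-1}(c)$ differ (in their $\Omega$-component) on at most finitely many coordinates of $T$, since disagreement can occur only at those $r \in T$ for which $rc \in C$, and $Tc \cap C$ is finite by hypothesis. An entirely analogous (or transported) computation shows the same for $\pi^{-1}(tc)$. Write $x = (y, \om_x)$, $x' = (y, \om_{x'})$, and let $F_0 \subset T$ be the finite set of coordinates where $\om_x$ and $\om_{x'}$ disagree.

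Now fix $p \in \Adh(X,T)$. Being an accumulation point of $T$ in the compact Hausdorff space $E(X,T)$, $p$ is the limit of a net $(t_i)$ in $T$ which, after passing to a subnet, eventually leaves every finite subset of $T$ (otherwise a subnet would be eventually constant in $E(X,T)$, contradicting that $p$ is an accumulation point of $T \setminus \{p\}$). For each $i$ we have
$$
T_{t_i} x = (t_i y,\, S_{t_i}\om_x), \qquad T_{t_i} x' = (t_i y,\, S_{t_i}\om_{x'}),
$$
and for any fixed coordinate $r \in T$,
$$
(S_{t_i}\om_x)(r) = \om_x(r t_i), \qquad (S_{t_i}\om_{x'})(r) = \om_{x'}(r t_i).
$$
Since $(t_i)$ eventually avoids the finite set $r^{-1} F_0$, we have $r t_i \notin F_0$ eventually, so the two values agree for $i$ large. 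Passing to the pointwise limit yields $px = px'$, proving $(x,x') \in \mathrm{ASYM}$.

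This establishes that $\pi$ is an asymptotic extension; combined with the AA$_c$ property, Theorem~\ref{asym} applies and gives that $(X,T)$ is tame$_\mathbf{1}$. The one point that requires a little care is the passage to a net that escapes every finite subset of $T$, but this follows from a standard compactness argument in $E(X,T)$; the rest is a direct consequence of the fiber structure already isolated in the setup.
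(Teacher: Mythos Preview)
Your proof is correct and follows essentially the same route as the paper. The paper's argument is really the discussion preceding the proposition: it establishes that any two points in a fiber $\pi^{-1}(tc)$ differ in at most finitely many $\Omega$-coordinates (because $Tc\cap C$ is finite) and then simply asserts that $\pi$ is therefore asymptotic; you supply the missing link with an explicit net argument showing that finitely many differing coordinates get shifted off to infinity under any $p\in\Adh(X,T)$, forcing $px=px'$. Your remark that the net can be taken to eventually leave every finite subset of $T$ is justified exactly as you indicate.
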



\br 
\section{A tame$_\mathbf{1}$ almost automorphic system which is not tame$_\mathbf{2}$}
\sk

\begin{prop} \label{p:AlmostAut}    \  
\begin{enumerate}
\item
There exists a tame$_\mathbf{1}$ almost automorphic $\Z$-system which is not tame$_\mathbf{2}$.
\item
There are two disjoint minimal tame$_{\bf 2}$ systems whose product
(which is minimal and tame$_{\bf 1}$) is not tame$_{\bf 2}$.
\end{enumerate}
\end{prop}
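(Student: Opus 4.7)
The plan is to construct a single example that witnesses both (1) and (2) simultaneously. First I fix $\alpha_1, \alpha_2 \in \R$ with $\{1, \alpha_1, \alpha_2\}$ linearly independent over $\Q$ and, for $i=1,2$, form the Sturmian-type system $(X_i, T_i) = (\T_{A_i}, T_{\alpha_i})$ with $A_i = \{n\alpha_i : n \in \Z\}$, obtained by splitting the orbit of $0$ in the irrational rotation $(\T, R_{\alpha_i})$. By Corollary \ref{cor:St}, each $(X_i, T_i)$ is a minimal almost automorphic tame$_{\mathbf{2}}$ system; in fact a direct analysis shows that $E(X_i, T_i)$ is the disjoint union of the countable discrete orbit $\{T_i^n : n \in \Z\}$ and a copy of the double circle $\T_\T$, which is a hereditarily separable Rosenthal compactum.

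Next I verify that the product system $(Z,T) := (X_1 \times X_2, T_1 \times T_2)$ (a single $\Z$-system) is minimal, almost automorphic, and tame$_{\mathbf{1}}$. Rational independence of $\{1, \alpha_1, \alpha_2\}$ makes $(\T^2, R_{(\alpha_1, \alpha_2)})$ minimal, and combined with the fact that each $(X_i, T_i)$ is a tame almost one-to-one extension of its maximal equicontinuous factor, this yields Furstenberg-disjointness of $(X_1, T_1)$ and $(X_2, T_2)$, so $(Z,T)$ is minimal. Almost automorphicity of $Z$ is immediate: the projection $Z \to \T^2$ is an almost one-to-one extension, its singleton-fibre locus $(\T \setminus A_1) \times (\T \setminus A_2)$ being a dense $G_\del$. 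Since the class Tame$_{\mathbf{1}}$ is closed under countable products (stated in the introduction), $(Z,T)$ is tame$_{\mathbf{1}}$.

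The crux will be to show that $E(Z,T)$ contains an uncountable discrete subspace, forcing $(Z,T)$ not to be tame$_{\mathbf{2}}$. Using the natural inclusion $E(Z,T) \hookrightarrow E(X_1,T_1) \times E(X_2,T_2)$ and the explicit description of each $E(X_i, T_i)$ in terms of $\T_\T$, the idea is to construct, for each $\beta \in \T$, an element $q_\beta \in E(Z,T)$ whose projections are the split-point operators $(\beta, +)$ and $(\beta, -)$ in the two copies of $\T_\T$, where $+$ stands for approach from the right and $-$ from the left. By Weyl equidistribution of $\{(n\alpha_1, n\alpha_2)\}_{n \in \Z}$ in $\T^2$ (using rational independence of $\{1, \alpha_1, \alpha_2\}$), the open ``sector'' $(\beta, \beta + \delta) \times (\beta - \delta, \beta)$ contains infinitely many orbit points for every $\delta > 0$; choosing $n_k$ inside these shrinking sectors, $(T_1^{n_k}, T_2^{n_k})$ converges to the desired $q_\beta$.

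Finally I will show that $D = \{q_\beta : \beta \in \T\}$ is discrete in $E(Z,T)$. The key calculation is that for each $\beta_0 \in \T$ and small $\delta > 0$, the product neighbourhood of $q_{\beta_0}$ whose first coordinate lies in the half-open interval $[(\beta_0, +), (\beta_0 + \delta, -))$ of $\T_\T$ and whose second coordinate lies in $((\beta_0 - \delta, +), (\beta_0, -)]$ meets $D$ only at $q_{\beta_0}$: the first condition demands $\beta \in [\beta_0, \beta_0 + \delta)$ while the second demands $\beta \in (\beta_0 - \delta, \beta_0]$, and the intersection is $\{\beta_0\}$. This simultaneously proves (1), since $(Z,T)$ is then a tame$_{\mathbf{1}}$ almost automorphic $\Z$-system which fails tame$_{\mathbf{2}}$, and (2), since the pair $(X_1, T_1), (X_2, T_2)$ consists of disjoint minimal tame$_{\mathbf{2}}$ systems whose product is minimal, tame$_{\mathbf{1}}$, and not tame$_{\mathbf{2}}$. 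The main obstacle is precisely this antidiagonal construction, which requires careful bookkeeping with the half-open basis of the split-point topology on $\T_\T$ and with the joint equidistribution needed to lift the antidiagonal from $\T_\T \times \T_\T$ into $E(Z,T)$.
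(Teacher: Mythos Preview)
Your proof is correct and follows essentially the same route as the paper: both take the product of two Sturmian-type systems with rationally independent rotation numbers and exhibit an uncountable discrete ``antidiagonal'' in the enveloping semigroup, coming from the Sorgenfrey-torus structure of $\Adh(X_1)\times\Adh(X_2)$. The only cosmetic differences are that the paper first establishes the identification $\Adh(X_1\times X_2)\cong\Adh(X_1)\times\Adh(X_2)$ abstractly (via minimality) and then reads off the discrete set, whereas you build the elements $q_\beta$ directly by Weyl equidistribution; and you obtain tame$_{\mathbf 1}$ from closure under products while the paper invokes the asymptotic-extension criterion (Theorem~\ref{asym}).
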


\begin{proof}
Let $\T=\R/\Z$ be the one-dimensional torus, and let $\al\in \R$
be a fixed irrational number and $T_\al \colon \T \to \T$ is the
rotation by $\al$,\ $T_\al \theta = \theta  +\al \pmod{1}$. We
define a topological space 
$X=\T_A$ and a continuous map $\pi \colon X \to
\T$ as follows.
Intuitively, $\T_A$ is a circularly ordered space which we get by splitting on the circle $\T$ the points of the orbit $A:=\{n \alpha: n \in \Z\}$. 
Then the map $\pi \colon X \to \T$ is just the natural projection (gluing back the splitted points). 
For $\theta\in \T \setminus \{n\al : n \in \Z\}$ the
preimage $\pi^{-1}(\theta)$ 
is
a singleton $x_\theta$. On the
other hand for each $n\in \Z$, $\pi^{-1}(n\al)$ 
consists of
exactly two points $x^{-}_{n\al}$ and $x^{+}_{n\al}$. For
convenience we will use the notation $\theta^{\pm}$,\ ($\theta \in
\T$) for points of $X$, where $(n\al)^{-}=x^{-}_{n\al}$,\
$(n\al)^{+}=x^{+}_{n\al}$ and $\theta^{-}=\theta^{+}=x_\theta$ for
$\theta\in \T \setminus \{n\al : n \in \Z\}$. A basis for the
topology at a point of the form $x_\theta,\ \theta\in \T \setminus
\{n\al: n \in \Z\}$, is the collection of sets
$\pi^{-1}(\theta-\ep,\theta +\ep),\ \ep > 0$. For $(n\al)^{-}$ a
basis will be the collection of sets of the form $\{(n\al)^{-}\}
\cup \pi^{-1}(n\al-\ep,n\al)$, where $\ep>0$. Finally for
$(n\al)^{+}$ a basis will be the collection of sets of the form
$\{(n\al)^{+}\} \cup \pi^{-1}(n\al,n\al+\ep)$. It is not hard to
check that this defines a compact metrizable zero dimensional
topology on $X$ (in fact $X$ is homeomorphic to the Cantor set)
with respect to which $\pi$ is continuous. Next define $T \colon X\to X$
by the formula $T\theta^{\pm} = (\theta + \al)^{\pm}$.
It is easy to see that the system $(X,T)$ is minimal and that the natural factor map
$\pi \colon (T,X) \to (R_\al,\T)$ is asymptotic; so that the AA system $(X,T)$ is tame.

We now define for each $\ga \in \T$ two distinct maps
$p_\ga^{\pm} \colon X \to X$ by the formulas
$$
p_\ga^+(\theta^{\pm})= (\theta + \ga)^+,
\qquad
p_\ga^-(\theta^{\pm})= (\theta + \ga)^-.
$$
We leave the easy verification of the following claims
to the reader.
\begin{enumerate}
\item
For every $\ga\in \T$ and every {\em sequence}, $n_i
\nearrow \infty$ with
 $$\lim_{i\to\infty}n_i\al = \ga,\
{\text{and}}\ \forall i,\ n_i\al < \ga,$$  
we have
$\lim_{i\to\infty}T^{n_i} = p_\ga^{-}$ in $E(X,T)$.
An analogous statement holds for $p_\ga^{+}$.
\item
$E(X,T) = \{T^n: n\in \Z\} \cup
\{p_\ga^{\pm}: \ga \in \T\}$
\item
The subspace $\{T^n: n\in \Z\}$ inherits from
$E$ the discrete topology.
\item
The adherence semigroup 
$$\Adh(X,T) = E(X,T) \setminus \{T^n: n\in \Z\} = \{p_\ga^{\pm}:
\ga \in \T\}$$ is homeomorphic to the ``two arrows" space of
Alexandroff and Urysohn (see \cite[page 212]{Eng}, and also to Ellis'
``two circles" example
\cite[Example 5.29]{E-book}).
It thus follows that $E$ is a separable
Rosenthal compactum of cardinality $2^{\aleph_0}$.
\item
For each $\ga\in \T$ the complement of the set
$C(p_\ga^{\pm})$ of continuity points of $p_\ga^{\pm}$ is
the countable set $\{\theta^{\pm}: \theta + \ga = n\al,
\ {\text{for some}}\ n\in \Z\}$.
In particular, each element of $E$ is of Baire class 1.
\end{enumerate}

Algebraically 
$$
\Adh(X,T) = \{p_\ga^{\pm}: \ga \in \T\} = \T \times \{1, -1\}
$$ 
and it can be checked that its topology is
separable and first countable. Each of the two components $\T \times \{1\}$ and 
$\T \times \{-1\}$ is dense, and with the induced topology, it is the Sorgenfrey circle.
These components though are highly non-measurable sets.
Dynamically $M= \Adh(X,T)$ is the unique minimal left ideal of $E(X,T)$
and thus a minimal system (see Section \ref{mi} below).

Next let $\beta \in \R$ be an irrational number such that the pair $(\al, \beta)$ 
is independent over the rational numbers. We consider the dynamical system
$(Y,T)$ which is obtained as above from $ (\T, R_\beta)$.
The two systems  $(X,T)$ and $(Y,T)$ are then {\em disjoint}, which means that the product system
$(X \times Y, T \times T)$ is minimal.
It then follows that also the two minimal systems $\Adh(X,T)$ and $\Adh(Y,T)$ are disjoint,
so that the system
$$
\Om = \Adh(X,T)\times \Adh(X,T) \cong (\T \times \{1,-1\}) \times (\T \times \{1,-1\})
$$
is a minimal system.

Given $r \in E(X \times Y, T \times T)$ we let $(p,q) =(p_r,q_r) \in E(X,T) \times E(Y,T)$
be its canonical image in $E(X,T) \times E(Y,T)$. This then defines a homomorphism
of $\Adh(X \times Y, T \times T)$ into $\Om = \Adh(X,T) \times \Adh(X,T)$
which is clearly injective.
Conversely, given a pair $(p,q) \in \Adh(X,T)\times \Adh(X,T)$
there is, by minimality, an $r \in \Adh(X \times Y,T \times T)$ with $r(\Id_X, \Id_Y) = (p,q)$.
Thus our homomorphism is also surjective, i.e. an isomorphism
of $\Adh(X \times Y,T \times T)$ and $\Om = \Adh(X,T)\times \Adh(X,T)$.


%
%

It is now easy to check that the subset 
$$
\Om_0 = (\T \times \{1\}) \times (\T \times \{-1\}) \subset \Om
$$
is a copy of the Sorgenfrey torus and that its subset
$$
\Del = \{(p_\ga^+, p_{-\ga}^+)  : \ga \in \T\}
$$ 
inherits the discrete topology from $\Om$.

\sk 
Thus our system $(X \times Y, T \times T)$ is
\begin{itemize}
\item
a metric minimal $\Z$-system,
\item
almost automorphic (over the $2$-torus $(\T \times \T, R_\al \times R_\beta)$),
\item
tame ($E(X \times Y, T \times T)$ is a Rosenthal compactum),
\item
$E(X \times Y, T \times T)$ is first countable (e.g. by Theorem \ref{asym}),

\item
and $E(X \times Y, T \times T)$ is not hereditarily separable. 
\end{itemize}

This completes our proof.
\end{proof}

\sk 

\begin{remark}
As we have shown $\Adh(X \times Y)$ is the topological square of the two arrows space. 
Note that it contains the Alexandroff duplicate $D(\{0,1\}^\N)$ of the Cantor set. 
It is equivalent to 
showing that the square $S^2$ of the split interval $S:=S([0,1])$ 
contains $D(\{0,1\}^\N)$ (here $S:=\{p^{\pm}: p \in [0,1]\}$ and $p^{-} < p^{+}$). 
In order to see this consider the Cantor set $C \subset [0,1]$ and define in $S^2$ the following subset 
$$
Y:=\{(p^{+},(1-p)^{+}): p \in C\} \cup  \{(p^{-},(1-p)^{+}): p \in C\}.  
$$
Then $Y$ is homeomorphic to $D(\{0,1\}^\N)$. 
The ``positive half"
 $\{(p^{+},(1-p)^{+}): p \in C\}$
  is a discrete subset of $Y$.   
\end{remark}

\br 
\section{Weak rigidity}\label{wr}
\sk

A dynamical system $(X,T)$ (where we assume that the $T$-action is effective) is 
said to be {\em weakly rigid} when $\Adh(X,T) = E(X,T)$ or equivalently when 
there is a {\bf net} 
$\{t_i\} \subset T \setminus \{e\}$ with $t_i \to \Id$ (pointwise on $X$).
It is called {\em rigid} when there is a {\bf sequence} 
$\{t_i\} \subset T \setminus \{e\}$ with $t_i \to \Id$ (pointwise on $X$).
Finally $(X,T)$ is {\em uniformly rigid} when the convergence of the sequence $\{t_i\}$
to $\Id$ is uniform. 
Note that a system $(X,T)$ is not weakly rigid when and only when $T$
is an open dense subset of $E(X,T)$.

The following claim follows directly from the fact that
the enveloping semigroup of a 
metric 
tame system is a Fr\'{e}chet-Urysohn topological space (Theorem \ref{f:DynBFT}).

\begin{prop}
A tame system is weakly rigid if and only if it is rigid.
\end{prop}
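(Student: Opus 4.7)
The proof naturally splits into two directions. The implication rigid $\Rightarrow$ weakly rigid is immediate, since every sequence is a net; so the real content is the converse, and this is where I would concentrate.

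Suppose $(X,T)$ is a (metric) tame system which is weakly rigid. By the stated definition, there exists a net $\{t_i\} \subset T \setminus \{e\}$ with $t_i \to \mathrm{Id}$ pointwise on $X$, i.e., $t_i \to \mathrm{Id}$ in $E(X,T)$. Equivalently, $\mathrm{Id}$ lies in the closure (taken inside $E(X,T)$) of the subset $T \setminus \{e\}$. The plan is then to upgrade this net convergence to sequential convergence via the Fr\'echet--Urysohn property of $E(X,T)$.

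Since $(X,T)$ is tame, Theorem \ref{f:DynBFT} tells us that $E(X,T)$ is a separable Rosenthal compactum, hence Fr\'echet--Urysohn; this is precisely the property that for every subset $A \subset E(X,T)$ and every point $p \in \overline{A}$, there exists a sequence in $A$ converging to $p$. Applying this to $A = T \setminus \{e\}$ and $p = \mathrm{Id}$, we obtain a sequence $\{t_n\}_{n \in \mathbb{N}} \subset T \setminus \{e\}$ with $t_n \to \mathrm{Id}$ in $E(X,T)$, that is, pointwise on $X$. This is exactly the definition of rigidity, so $(X,T)$ is rigid.

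There is essentially no obstacle here: the only point to check carefully is the translation between the three equivalent formulations of weak rigidity (existence of a net in $T \setminus \{e\}$ tending to $\mathrm{Id}$; the condition $\mathrm{Id} \in \overline{T \setminus \{e\}}$ in $E(X,T)$; and the identity $\mathrm{Adh}(X,T) = E(X,T)$ recorded at the start of this section), and the parallel translation for rigidity using sequences instead of nets. Once this bookkeeping is done, the Fr\'echet--Urysohn property supplied by the dynamical BFT dichotomy does all the work in a single step.
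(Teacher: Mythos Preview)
Your proof is correct and is exactly the approach the paper takes: the paper simply remarks that the claim follows directly from the fact that $E(X,T)$ is Fr\'echet--Urysohn for metric tame systems (Theorem \ref{f:DynBFT}), and you have spelled out that one-line argument in full detail.
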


\begin{prop}
For any dynamical system $(X,T)$, weak rigidity implies that $$\Asym(X,T) =\Delta.$$
\end{prop}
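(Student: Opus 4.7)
The plan is to show the non-trivial inclusion $\Asym(X,T) \subset \Delta$, since the reverse containment $\Delta \subset \Asym(X,T)$ is immediate from the defining condition (for every $p$ we have $px=px$). So the task reduces to showing that under weak rigidity no pair $(x,x')$ with $x\neq x'$ can satisfy $px = px'$ for every $p \in \Adh(X,T)$.

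The key step is to observe that the identity map $\Id \in E(X,T)$ actually lies in $\Adh(X,T)$. This is essentially the definition of weak rigidity: the paper states that $(X,T)$ is weakly rigid precisely when $\Adh(X,T) = E(X,T)$. Equivalently (and this is how I would record the step cleanly), weak rigidity provides a net $\{t_i\} \subset T \setminus \{e\}$ with $t_i \to \Id$ pointwise on $X$, which means exactly that $\Id$ is an accumulation point of $T$ in $E(X,T)$, i.e.\ $\Id \in \Adh(X,T)$.

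Once $\Id \in \Adh(X,T)$ is in hand, I would take any $(x,x') \in \Asym(X,T)$ and simply apply the defining condition to the element $p = \Id$, obtaining $x = \Id(x) = \Id(x') = x'$, whence $(x,x') \in \Delta$. No genuine obstacle arises; the entire content of the proposition is that the identity itself witnesses $\Asym(X,T) = \Delta$ whenever it belongs to the adherence semigroup, and weak rigidity is precisely the hypothesis that supplies this membership.
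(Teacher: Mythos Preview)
Your proof is correct and follows essentially the same approach as the paper: both arguments use the weak rigidity hypothesis to place $\Id$ in $\Adh(X,T)$ (via a net $t_i \to \Id$ with $t_i \neq e$) and then evaluate the asymptotic condition at $p=\Id$ to force $x=x'$. The paper phrases the final step as $x=\lim t_i x = \lim t_i x' = x'$, which is exactly your observation written in terms of the net.
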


\begin{proof}
Suppose $(X,T)$ is weakly rigid and let $(x, x')$ be an asymptotic pair.
Let $\{t_i\} \subset T \setminus \{e\}$ be a net such that $t_i \to \Id$.
Then
$$
x = \lim t_i x, \quad x'  = \lim t_i x' 
$$
and $ \lim t_i x   = \lim t_i x'$, whence $x = x'$. 
\end{proof}

%
%

\begin{exa}(See \cite{G-Ma}, \cite{GW} and \cite{AAB})
	\begin{enumerate}
		\item
		Every distal system is weakly rigid.
		\item
		Every WAP cascade is uniformly rigid.
		\item
		There are minimal weakly mixing uniformly rigid 
		cascades.
	\end{enumerate}
\end{exa}

\begin{exa}
The Glasner-Weiss examples (see \cite[theorem 11.1]{G-M}) 
are uniformly rigid cascades which are HNS hence tame.
These systems are metric, not equicontinuous and not minimal.
Being HNS their enveloping semigroups are actually metrizable (see \cite{G-M-U}).
\end{exa}


\begin{exa}
	In \cite{H-J} the authors show that their `Auslander systems' (a family of
	almost automorphic systems which generalize the Floyd system) is never weakly rigid.
\end{exa}

\begin{question} \label{q-rigid} \
	\begin{enumerate}
		\item
		Is there an example of a tame rigid (metric, cascade) system $(X,T)$ which is not HNS (i.e. with
		non-metrizable enveloping semigroup)?
		\item
		Is there  a minimal, tame, rigid system $(X,T)$ which is not equicontinuous ?
	\end{enumerate}
\end{question}

\begin{exa} \label{e:EllisProj}  
	In \cite{E} the author shows that the action of 
	$G =\mathrm{GL}(d,\R)$ on the projective space
	$\mathbb{P}^{d-1}, \ d \ge 2$, is tame and that the corresponding
	enveloping semigroup $E(\mathbb{P}^{d-1}, G)$ is not first countable. 
	He also shows that the group $G$ embeds as an open subset of  $E(\mathbb{P}^{d-1}, G)$.
	Thus this action is not weakly rigid. 
\end{exa}

The system in the following example is a 
(two-to-one) extension of the system from Example \ref{e:EllisProj}. 
This demonstrates that {\bf the class Tame$_\mathbf{1}$ is not closed under factors}.

\begin{ex} \label{e:Akin}  
	\cite{Ak-98} 
	The sphere $\mathbb{S}^{d-1}$ 
	with the projective action of the group $G=\mathrm{GL}(d,\R)$ (or, of the projective group 
	$\mathrm{PGL}(d,\R)$) is tame$_\mathbf{1}$ but not tame$_\mathbf{2}$.
\end{ex}
%
%
%

\begin{exa}\label{sub}
	Let $G=\mathrm{GL}(d,\R)$ and $X_1 = \mathbb{S}^{d-1}$, $X_2 = \mathbb{P}^{d-1}$.
	Set $X = X_1 \cup X_2$, 
	the disjoint sum of $G$-systems.  
	The dynamical system $(X,G)$ is tame and, as the system $(X_2,G)$ 
	is a factor of $(X_1,G)$, we have $E(X,G) = E(X_1,G)$.
	We see that the tame$_{\bf 1}$ system 
	$(X,G)$
	admits a subsystem, namely $(X_2,G)$, which 
	is not tame$_{\bf 1}$. Thus 
	{\bf the class Tame$_\mathbf{1}$ is not closed under subsystems}.
\end{exa}

\br 
\section{On tame dynamical systems which are not tame$_\mathbf{1}$} 
\sk

Let $(X,T)$ be a dynamical system with enveloping semigroup $E = E(X,T)$.
Let us call an element $p \in E$  a {\em parabolic idempotent with target $x_0$} if there is a point $x_0 \in X$
such that $px = x_0, \forall x \in X$, and a {\em loxodromic idempotent with target $(x_0, x_1)$} if there are
distinct points $x_0, x_1 \in X$ with $px = x_0, \forall x \in X \setminus \{x_1\}$ and $px_1 =x_1$.
We say that $x_0$ and $x_1$ are the {\em attracting and repulsing points of $p$} respectively.
Clearly, if $(X,T)$ admits a parabolic idempotent, it is necessarily a proximal system
and therefore contains a unique minimal set $Z \subseteq X$.
If $(X,T)$ is a proximal system then every minimal idempotent is parabolic with target at the minimal set 
(and of course conversely,
every parabolic idempotent whose target is in the minimal subset of $X$ is a minimal idempotent).

\begin{prop}\label{two}
Let $(X,T)$ be a proximal dynamical system. 
Let $Z \subset X$ be its (necessarily unique) minimal subset.
\begin{enumerate}
\item
Suppose that there is an uncountable set $B \subset X$ such that
for each $b \in B$ there is a loxodromic idempotent $p_b$ with target $(a_b, b)$, with 
$b$ as a repulsing point and $a_b \in Z$ the attracting point
such that $b \not = a_b$. 
Then $E(X,T)$ contains the uncountable discrete subset  
$\{p_b : b \in B\}$,
hence it is not hereditarily separable.
\item
Suppose there is a point $a \in Z$ and an uncountable set 
 of points $B =\{b_\nu\} \subset   X \setminus \{a\}$
such that each pair $(a, b_\nu)$ is the target pair of a loxodromic idempotent $p_{(a, b_\nu)}$
with attracting point $a$ and a repulsing point $b_\nu$.
Then the parabolic idempotent $p_a$ defined by $p_ax = a, \ \forall x \in X$,
does not admit a countable basis for its topology, 
hence $E(X,T)$ is not first countable.
\end{enumerate}
\end{prop}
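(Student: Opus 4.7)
For part (1) I will show the uncountable set $\{p_b : b \in B\}$ is discrete in the pointwise topology on $E(X,T) \subset X^X$; since an uncountable discrete subspace is not separable, this yields that $E(X,T)$ is not hereditarily separable. Fix $b_0 \in B$ and suppose for contradiction that some net $p_{b_\alpha} \to p_{b_0}$ with $b_\alpha \in B \setminus \{b_0\}$. Evaluating at the coordinate $b_0$: since $b_\alpha \neq b_0$, we have $p_{b_\alpha}(b_0) = a_{b_\alpha}$, while $p_{b_0}(b_0) = b_0$, so $a_{b_\alpha} \to b_0$. To get a second, incompatible limit, pick two distinct points $x_1, x_2 \in X \setminus \{b_0\}$ and note that at each index at most one of $x_1, x_2$ can equal $b_\alpha$. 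A pigeonhole step lets me pass to a cofinal subnet along which, say, $b_\alpha \neq x_1$ throughout, so $p_{b_\alpha}(x_1) = a_{b_\alpha}$, and convergence at $x_1$ gives $a_{b_\alpha} \to p_{b_0}(x_1) = a_{b_0}$. Comparing the two limits forces $a_{b_0} = b_0$, contradicting the hypothesis $b_0 \neq a_{b_0}$. Hence $p_{b_0}$ is isolated in $\{p_b : b \in B\}$, and the set is discrete.

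For part (2), I will apply Proposition \ref{countable} to the parabolic idempotent $p_a$ defined by $p_a x = a$ for all $x$: a countable local basis at $p_a$ in $E$ exists iff some countable $C \subset X$ determines $p_a$, meaning every $q \in E$ with $q\rest C = p_a \rest C$ already equals $p_a$. I will show no such $C$ exists. Given any countable $C \subset X$, uncountability of $B$ lets me choose $b_\nu \in B \setminus C$; then the loxodromic idempotent $q := p_{(a, b_\nu)}$ satisfies $q(x) = a = p_a(x)$ for every $x \in X \setminus \{b_\nu\}$, hence for every $x \in C$, while $q(b_\nu) = b_\nu \neq a = p_a(b_\nu)$ (note $b_\nu \neq a$ since $B \subset X \setminus \{a\}$). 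Thus $C$ fails to determine $p_a$; as $C$ was arbitrary, $p_a$ admits no countable local basis in $E(X,T)$, so $E(X,T)$ is not first countable.

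The main technical point is the coordinate-selection step in part (1): testing only at $b_0$ yields $a_{b_\alpha} \to b_0$, but the contradiction requires also $a_{b_\alpha} \to a_{b_0}$, which in turn requires evaluating at some point $x_0 \neq b_0$ that is avoided by the moving index $b_\alpha$ along a cofinal subnet. Choosing two witnesses $x_1, x_2 \neq b_0$ and pigeonholing on which of them coincides with $b_\alpha$ handles this cleanly, since a single $b_\alpha$ cannot equal both. Part (2) is then essentially a direct invocation of Proposition \ref{countable} together with the cardinality mismatch between $B$ and any countable $C$.
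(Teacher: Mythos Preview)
Your proof is correct and follows the paper's approach: for part (2) your argument via Proposition~\ref{countable} is exactly the paper's (which credits Ellis), and for part (1) the paper simply writes ``Straightforward'', so you are supplying the omitted details. One minor streamlining: your net-plus-pigeonhole argument in (1) can be recast as exhibiting an explicit isolating neighborhood --- with $\delta = d(b_0, a_{b_0}) > 0$ and any two distinct $x_1, x_2 \in X \setminus \{b_0\}$, the basic open set $\{q : d(q(b_0), b_0) < \delta/2,\ d(q(x_i), a_{b_0}) < \delta/2,\ i=1,2\}$ contains $p_{b_0}$ but no other $p_b$, by the same triangle-inequality contradiction you derive --- but this is only a cosmetic difference.
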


\begin{proof}
(1) \ Straightforward. 

(2) \ 
We repeat Ellis' argument in \cite{E} as follows:
Assuming otherwise, in view of Lemma \ref{countable}, there is a countable set $C \subset X$
such that for any $q \in E(X,T)$, if $qc = p_ac$ for every $c \in C$ then $q=p_a$.
Now the set $B$ is uncountable and we can choose an element $b_\nu \in B \setminus C$.
It then follows that for every $c \in C$ we have
$$
p_{(a,b_\nu)}c = p_a c =a, 
$$
but nonetheless $p_{(a,b_\nu)}b_\nu = b_\nu \not= a = p_ab_\nu$.
Thus $p_{(a,b_\nu)} \not = p_a$, a contradiction. 
\end{proof}

\begin{remark} \label{r:AlexComp} 
	The subset $\{p_a\} \cup B$ of $X$ is a topological copy of the
	Alexandroff compactification $A({\bf c})$, of a discrete space of size continuum.
	Note that, in view of Theorem \ref{tri}, the existence of a copy of $A({\bf c})$ in $E(X)$ is another
	way of seeing that $(X,T)$ is not 
	tame$_{\bf 1}$ 
	(see 
	additional cases where 
	 Proposition \ref{two}(2) applies, in Theorem \ref{uniform}, Theorem \ref{t:LinAreTame}
	 and Proposition \ref{ex:T}).  
\end{remark}

\br

Let $\Ga$ be an infinite countable uniform convergence group acting on a compact metric space
$X$ with infinite limit set $\La \subset X$. Then the system $(\La, \Ga)$ 
is minimal and every point of $\La$ is conical (see \cite[Proposition 3.3]{Bo}).


\begin{defin}
A point $b \in \La$ is a {\em conical limit point} if there is a wandering net, $\ga_n \in \Ga$,
such that for all $y \in \La \setminus \{b\}$, the ordered pairs $(\ga_n b, \ga_n y)$ lie in a compact subset of the
space of distinct pairs of $\La$.
\end{defin}

\begin{claim}
The following conditions are equivalent for a point $b \in \La$:
\begin{enumerate}
\item
It is a conical point.
\item
There is a collapsing sequence $\ga_n$ and a point $a \in \La$ with 
$\ga_n y \to a$ for all $y \in \La \setminus \{b\}$ and $\ga_n b \to b$.
\item
There is a point $y \in X, y \not = b$ and 
there are nets $x_n$  in $X \setminus \{b, y\}$ with $x_n \to b$,
and $\ga_n \in \Ga$,
such that the sequence
$\ga_n (b, y, x_n)$  remains in a compact subset of $\Theta^0(X)$.
\item
The point $b$ is the repulsing point for a loxodromic projection $p \in E(X,\Ga)$.
\end{enumerate}
\end{claim}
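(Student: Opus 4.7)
The plan is to prove the cyclic implications (1)$\Rightarrow$(4)$\Rightarrow$(2)$\Rightarrow$(3)$\Rightarrow$(1). The central tool is the convergence-group property: every wandering sequence $\ga_n\subset\Ga$ admits a subsequence that \emph{collapses}, i.e.\ there exist $a,r\in X$ with $\ga_n|_K\to a$ uniformly on every compact $K\subset X\setminus\{r\}$; correspondingly $\ga_n^{-1}$ collapses with attractor $r$ and repeller $a$. Since $\La$ is closed and $\Ga$-invariant, all attractors, repellers, and subsequential limits of orbit points automatically lie in $\La$.

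For (4)$\Rightarrow$(2), given the loxodromic idempotent $e\in E(X,\Ga)$ with target $(a,b)$, choose $\ga_n\in\Ga$ with $\ga_n\to e$ pointwise; since $e\notin\Ga$, $\ga_n$ is wandering, and its collapsing subsequence must have attractor $a$ and repeller $b$ (matching $e$'s pointwise behavior), with $\ga_nb\to eb=b$. For (2)$\Rightarrow$(3), pick any $y\in X\setminus\{b\}$ and a third point $c\in X\setminus\{a,b\}$; the collapse of $\ga_n^{-1}$ with attractor $b$ makes $x_n:=\ga_n^{-1}c$ tend to $b$ (so $x_n\notin\{b,y\}$ eventually), while $\ga_n(b,y,x_n)\to(b,a,c)$ stays in a compact subset of $\Theta^0(X)$. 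For (3)$\Rightarrow$(1), extract a collapsing subsequence of $\ga_n$ with attractor $\al$ and repeller $\rho$; distinctness of the triple together with $x_n\to b$ excludes $b\neq\rho$ (either $\ga_nb$ and $\ga_ny$ would both converge to $\al$, or $\ga_nx_n$ would converge to $\al=\lim\ga_nb$), forcing $\rho=b$, whence every $y'\in\La\setminus\{b\}$ satisfies $\ga_ny'\to\al$ while $\ga_nb$ stays bounded away from $\al$, witnessing conicality.

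The principal step is (1)$\Rightarrow$(4). I would start with a wandering sequence $\ga_n$ satisfying (1) and extract a subsequence along which $\ga_n\to p$ and $\ga_n^{-1}\to q$ both converge in $E(X,\Ga)$ and both collapse. Compact distinctness of $(\ga_nb,\ga_ny)$ forces the repeller of $\ga_n$ to be $b$; with $a$ its attractor, $p\equiv a$ on $X\setminus\{b\}$ and $pb=:b^*\neq a$. Symmetrically, $q\equiv b$ off $\{a\}$ with $qa=:a^*$. A convergence-group argument -- using the uniform-convergence hypothesis together with minimality of $(\La,\Ga)$ to approximate $b^*$ by the $\Ga$-orbit of $b$ and compose $p$ with suitably chosen group elements (equivalently, replacing $\ga_n$ by a product $\delta_k\ga_{n(k)}$ whose tail action pins $\ga_nb$ back to $b$) -- brings us to the case $b^*=b$, in which $p$ itself is a loxodromic idempotent with target $(a,b)$.

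The main obstacle is precisely this last modification: a naive subsequential limit $p$ may have $pb=b^*\notin\{a,b\}$, in which case $p^2$ degenerates to the parabolic constant map $\equiv a$, and no iteration of $p$ yields a loxodromic projection. The uniform-convergence hypothesis (which, by classical convergence-group results, provides a rich supply of loxodromic elements with prescribed repelling fixed points in $\La$) is what ultimately allows one to genuinely modify the sequence; both fixed points of the resulting idempotent then lie in $\La$ automatically by closedness and $\Ga$-invariance.
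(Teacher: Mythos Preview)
The paper does not actually supply a proof of this Claim; it is stated as a known fact from convergence-group theory (the relevant source being Bowditch \cite{Bo}), and is then used in the proof of Theorem~\ref{uniform}. So there is no ``paper's proof'' to compare against.

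Your cycle $(4)\Rightarrow(2)\Rightarrow(3)\Rightarrow(1)$ is correct and cleanly argued. For $(1)\Rightarrow(4)$ you correctly isolate the only genuine difficulty: after extracting a collapsing subsequence one gets repeller $b$, attractor $a$, and $\ga_n b\to b^*$ with $b^*\neq a$, but nothing forces $b^*=b$, so the pointwise limit $p$ need not be idempotent. Your proposed repair, however, is vaguer than it needs to be, and the appeal to the \emph{uniform} convergence hypothesis is a red herring. A clean way to finish, using only what the paper already records: the system $(\La,\Ga)$ is minimal and proximal, hence weakly mixing (this is exactly \cite[Corollary II.2.2]{G}, invoked a few lines later in the proof of Theorem~\ref{uniform}). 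Weak mixing gives $\delta_k\in\Ga$ with $\delta_k(b^*,a)\to(b,c)$ for some $c\neq b$. Then $r:=\lim_k \delta_k p\in E(X,\Ga)$ satisfies $r(x)=\lim_k\delta_k a=c$ for $x\neq b$ and $r(b)=\lim_k\delta_k b^*=b$, so $r$ is a loxodromic idempotent with repelling point $b$. (Equivalently, one may take $q=\lim\ga_n^{-1}$ and check that $qp$ sends $X\setminus\{b\}$ to $qa$ and $b$ to $b$; the same weak-mixing argument disposes of the residual case $qa=b$.) Thus your outline is sound, but the closing step should be made precise along these lines rather than deferred to an unspecified ``classical result''.
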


\begin{thm}\label{uniform}
Let $\Ga$ be an infinite countable uniform convergence group acting on a compact metric space
$X$ with infinite limit set $\La \subset X$.
Then the dynamical system $(X, \Ga)$ is tame but $E(X, \Ga)$  
\begin{enumerate}
\item[(i)]
is not hereditarily separable; 
in fact it contains a  subset of cardinality $2^{\aleph_0}$ which is discrete in the relative topology.
\item[(ii)] 
is not first countable,
\end{enumerate}
\end{thm}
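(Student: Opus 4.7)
The plan is to invoke Proposition \ref{two} after establishing two ingredients: tameness of $(X,\Ga)$, and a rich supply of loxodromic idempotents in $E(X,\Ga)$ with prescribed target pairs.

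For tameness I would argue by a cardinality count. Every $p \in E(X,\Ga)$ is either an element of $\Ga$ or the limit of a net $\{\ga_i\}$ going to infinity in $\Ga$. The convergence-group axiom guarantees that, after extraction, such a net collapses onto a pair $(a,b) \in X \times X$ in the sense that $\ga_i y \to a$ for every $y \neq b$; hence every $p \in E(X,\Ga) \setminus \Ga$ is determined by the triple $(a,b,p(b))$. This forces $|E(X,\Ga)| \le \mathfrak{c}$, and the Bourgain--Fremlin--Talagrand dichotomy (Theorem \ref{f:DynBFT}) then excludes a copy of $\beta\N$ in $E(X,\Ga)$, so $(X,\Ga)$ is tame.

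The crucial technical step is the following \emph{realization lemma}: for every distinct pair $(a,b) \in \La \times \La$ there is a loxodromic idempotent $p_{(a,b)} \in E(X,\Ga)$ with $p_{(a,b)}y = a$ for all $y \neq b$ and $p_{(a,b)}b = b$. I would prove this by combining the cocompactness of the $\Ga$-action on the space $\Theta^0(\La)$ of distinct triples (which is the uniform convergence hypothesis) with minimality of the action on $\La$, so as to produce a sequence $\ga_n \in \Ga$ with $\ga_n y \to a$ uniformly on compacta of $X \setminus \{b\}$ and $\ga_n b \to b$; any cluster point of $\ga_n$ in $E(X,\Ga)$ is then the required idempotent. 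Note that $\La$ is a perfect compact metric space and hence has cardinality $\mathfrak{c}$.

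With the realization lemma in hand, fix any $a \in \La$ and set $B = \La \setminus \{a\}$. For (i), the family $\{p_{(a,b)} : b \in B\}$ satisfies the hypothesis of Proposition \ref{two}(1) with the uniform choice $a_b = a$; this family is discrete in $E(X,\Ga)$, since for each $b \in B$ and any $\ep$ with $0 < \ep < d(a,b)$ the basic neighborhood $\{q \in E : d(qb, b) < \ep\}$ separates $p_{(a,b)}$ (which sends $b$ to $b$) from every $p_{(a,b')}$ with $b' \ne b$ (which send $b$ to $a$). This yields (i). For (ii), the same family meets the hypothesis of Proposition \ref{two}(2); the parabolic idempotent $p_a\colon x \mapsto a$ lies in $E(X,\Ga)$, e.g.\ as the composition $p_{(a,b_0)} \circ p_{(a,b_1)}$ for any distinct $b_0, b_1 \in B$ (a direct verification), and Proposition \ref{two}(2) then yields that $p_a$ has no countable local basis in $E(X,\Ga)$.

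The principal obstacle is the realization lemma of the third paragraph; everything else is a routine application of Proposition \ref{two}, together with the proximality of $(X,\Ga)$ implicitly used there (which holds because any divergent sequence in $\Ga$ collapses every pair of points both distinct from its repelling point).
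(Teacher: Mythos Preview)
Your overall strategy matches the paper's: use Proposition \ref{two} after producing enough loxodromic idempotents. The tameness argument via a cardinality bound is fine and essentially what the paper does.

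The substantive difference --- and the gap --- is your realization lemma. You assert that for \emph{every} distinct pair $(a,b) \in \La \times \La$ one can build the loxodromic idempotent $p_{(a,b)}$, and you justify this only with the phrase ``cocompactness on triples plus minimality.'' Cocompactness on triples, together with conicality of $b$, does produce a collapsing sequence with repeller $b$, but the attractor that emerges is not under your control; minimality of $\La$ lets you push the attractor toward $a$, but at the cost of moving $b$ as well. Getting both endpoints prescribed simultaneously is exactly the hard part, and your sketch does not explain how to do it. (Equivalently: to pass from the loxodromic $p_{(c,b)}$ that conicality hands you to the desired $p_{(a,b)}$, you would need $(a,b) \in \overline{\Ga(c,b)} \subset \La^2$, and this is not obvious.)

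The paper avoids this issue entirely. For (i) it uses only that each $b \in \La$ is conical, obtaining one loxodromic idempotent $p_b$ per point with repeller $b$ and \emph{some} attractor $a_b$; discreteness of $\{p_b : b \in \La\}$ is then argued by showing that any accumulation point of this family must be parabolic. For (ii) the paper does not try to realize every pair: it invokes the fact that a minimal proximal system is weakly mixing, so the set of transitive pairs is a dense $G_\delta$ in $\La \times \La$, and then a Kuratowski--Ulam argument yields a residual set $D$ of attractors $a$, each admitting residually (hence uncountably) many repellers $b$ with $p_{(a,b)} \in E$. This residual supply is already enough for Proposition \ref{two}(2). If your realization lemma is true it would give a cleaner route, but as written its proof is missing; the paper's weak-mixing argument is what actually closes the gap.
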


\begin{proof}
The fact that $(X, \Ga)$ is tame follows directly from the definition of being a divergence group.
In  \cite{K-L} the authors show, with a direct simple argument, a stronger staetment,
namely that it is null.

(i) \  As every point of $X$ is conical, for each $b \in X$ let $p_b$ be a loxodromic idempotent,
say with repulsing point $b$. It is easy to check that every limit point of the 
collection $B = \{p_b : b \in X\}$ is parabolic. Thus the topology induced on $B$
from $E(X. \Ga)$ is discrete 
(by Proposition \ref{two}.1). 

(ii) \ 
By \cite[Corollary II, 2.2]{G} every minimal proximal system is weakly mixing;
i.e. the product system $X \times X$ is topologically transitive, and $X$ being 
a metric space, the set of transitive points
$$
A = \{(x, y) \in X \times X : \overline{\Ga(x,y)} = X \times X\}
$$ 
is a dense $G_\delta$ subset of $X\times X$.
We then conclude, by the theorem of Kuratowski and Ulam,
that there is a residual subset $D$ of $X$ such that for every $x \in D$
the set $A_x = \{y \in X : (x,y) \in A\}$ is residual in $X$.
In particular, $A_x$ is of cardinality $2^{\aleph_0}$.

Fix a point $(x_0, y_0) \in A$ and let $(z,w)$ be an arbitrary point in $X \times X$ with $z \not=w$.
There is the a sequence $\ga_n \in \Ga$ with $\ga_n(x_0, y_0) \to (z,w)$.
We can assume that $\ga_n $ is a collapsing sequence and then we have a pair
$(a,b) \in X \times X$ such that
$\ga_n x \to a$ for every $x \in X \setminus \{b\}$ and $\ga_n b \to b$.
Let $p_{(a,b)}$ denote the corresponding idempotent in $E(X, \Ga)$.
Thus $p_{(a,b)} x = a$ for every $x \in X \setminus \{b\}$ and $p_{(a,b)} b = b$.

It then follows that either
$z = \lim \ga_n x_0 = a$ and $w = \lim \ga_n =b$, 
or  $z = \lim \ga_n x_0 = b$ and $w = \lim \ga_n =a$.
Of course, by replacing $\ga_n$ by $\ga_n^{-1}$, we can assume with no loss of generality
that the first case occurs.

As a consequence of this discussion we conclude that:
\begin{quote}
For every $a \in D$ there is an a residual set of points $B_a =\{b_\nu\} \subset   X \setminus \{a\}$
such that each pair $(a, b_\nu)$ is the target pair of a loxodromic idempotent $p_{(a, b_\nu)}$
with attracting point $a$ and a repulsing point $b_\nu$.
\end{quote}
%
%
Now apply Proposition \ref{two} (2).
\end{proof}

A Gromov hyperbolic group acting on its boundary is an example of a group satisfying
the conditions of Theorem \ref{uniform}.
In fact, it was shown by Bowditch \cite[Theorem 0.1]{Bo-98} that essentially these are the only examples:
If an  infinite countable group $\Ga$ is acting as a
uniform convergence group on a compact perfect metric space $X$ 
then, $\Ga$ is hyperbolic and the dynamical system $(X,\Ga)$ 
is isomorphic to the action of $\Ga$ on its Gromov boundary.

\begin{cor}\label{grom}
The action of a hyperbolic group $\Ga$ on its Gromov boundary 
$\partial \Ga$ is tame but not tame$_\mathbf{1}$.
\end{cor}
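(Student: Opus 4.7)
The plan is to derive Corollary \ref{grom} as a direct application of Theorem \ref{uniform}. Essentially all of the dynamical work has already been done in that theorem, so the proof reduces to verifying that the hypotheses are satisfied for the action $\Ga \curvearrowright \partial\Ga$.

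First I would invoke the classical fact (going back to Gromov, and made precise by Tukia, Freden, and Bowditch) that a non-elementary word hyperbolic group $\Ga$ acts on its Gromov boundary $\partial\Ga$ as a \emph{uniform convergence group}. The boundary $\partial\Ga$ is a compact metrizable space, and for a non-elementary $\Ga$ it is perfect and infinite. Minimality of the boundary action implies that the limit set $\La$ coincides with $\partial\Ga$ itself, so the limit set is infinite. Thus the hypotheses of Theorem \ref{uniform} are fulfilled verbatim, with $X=\La=\partial\Ga$.

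Next I would simply quote Theorem \ref{uniform}: this says that $(\partial\Ga,\Ga)$ is tame, while its enveloping semigroup $E(\partial\Ga,\Ga)$ fails to be first countable (and in fact contains a discrete subset of cardinality $2^{\aleph_0}$). By Definition \ref{d:Tame_k}, this is exactly the assertion that the system is tame but not tame$_\mathbf{1}$. In the elementary case (where $\Ga$ is finite or virtually cyclic) the boundary has at most two points and the statement is vacuous, so it is understood that $\Ga$ is non-elementary.

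The only subtle point, and the one I would emphasize as the main content of the proof, is the verification of the uniform convergence group property for $\Ga \curvearrowright \partial\Ga$ in a self-contained way; but this is standard in the geometric group theory literature and need not be reproved here. Once it is cited, the conclusion is immediate from Theorem \ref{uniform}, and the corollary can be proved in one or two sentences. Bowditch's converse \cite[Theorem 0.1]{Bo-98} mentioned just before the corollary highlights that, up to topological conjugacy, this class of examples coincides with the uniform convergence actions on perfect compact metric spaces, so no essentially new examples beyond those already covered by Theorem \ref{uniform} arise here.
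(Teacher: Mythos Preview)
Your proposal is correct and matches the paper's approach exactly: the paper states the corollary immediately after observing that a Gromov hyperbolic group acting on its boundary satisfies the hypotheses of Theorem \ref{uniform}, and gives no separate proof. Your handling of the elementary case and the citation of the uniform convergence group property are appropriate additions that the paper leaves implicit.
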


\begin{ex}\label{dynkin}(Dynkin and Maljutov \cite{D-M})
The free group $F_2$ on two generators, say $a$ and $b$, is hyperbolic and its 
boundary can be identified with the compact metric space $\Om$ (a Cantor set) of all
the one-sided infinite reduced words $w$ on the symbols $a, b, a^{-1}, b^{-1}$.
The group action is
$$
F_2 \times \Om \to \Om,   \quad (\ga, w) = \ga\cdot w,
$$
where $\ga\cdot w$ is obtained by concatenation of $\ga$ (written in its reduced form)
and $w$ and then performing the needed cancelations (see \cite{D-M}).
The resulting dynamical system is tame (in fact, null), and the 
enveloping semigroup $E(\Om, F_2)$ is Fr\'{e}chet-Urysohn but not first countable. 
\end{ex}

\begin{question}\label{q:abel}
	Suppose $T$ is abelian (or even more specifically that $T= \Z$). 
	Is there a tame metric (minimal) system $(X,T)$
	such that $E(X,T)$ is not first countable ?
\end{question}


\br
\section{ Linear actions on Euclidean spaces} \label{s:LinAct} 
 \sk 

 For many important actions $T \times X \to X$ the phase space is not compact. 
 One may still study the tameness of this action via $T$-compactifications of $X$. 
 It is especially natural whenever $X$ is locally compact. 
 Since $T$-factors preserve the class Tame (but not Tame$_\mathbf{1}$) 
 the question can be reduced to the case of the one-point $T$-compactification of $X$. 
 
 \begin{thm} \label{t:LinAreTame} 
 	Every linear action $\GL(n,\R) \times \R^n \to \R^n$ 
 	 is tame but, for $n \geq 2$ it is not tame$_\mathbf{1}$.   
 \end{thm}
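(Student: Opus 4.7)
The plan is to pass to the one-point compactification $X = \R^n \cup \{\infty\}$, on which the linear $\GL(n,\R)$-action extends with $\infty$ as a global fixed point (this reduction is legitimate since Tame is closed under factors). For tameness I will realize $(X, \GL(n,\R))$ as an equivariant factor of Ellis' system from Example \ref{e:EllisProj}. Via the embedding $\iota \colon \GL(n,\R) \hookrightarrow \GL(n+1,\R)$, $A \mapsto \operatorname{diag}(A,1)$, the group $\GL(n,\R)$ acts on $\mathbb{P}^n$ preserving both the affine chart $\{[v:1]: v \in \R^n\} \cong \R^n$ and the invariant hyperplane at infinity $\mathbb{P}^{n-1} = \{[v:0]\}$. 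Collapsing $\mathbb{P}^{n-1}$ to a single point yields the one-point compactification $X$, and the quotient map $\mathbb{P}^n \to X$ is $\GL(n,\R)$-equivariant. By Example \ref{e:EllisProj}, $(\mathbb{P}^n, \GL(n+1,\R))$ is tame, so $|E(\mathbb{P}^n, \GL(n+1,\R))| \leq 2^{\aleph_0}$. Since the enveloping semigroup of a sub-action is contained in that of the ambient action (as the pointwise closure of a subfamily), $|E(\mathbb{P}^n, \GL(n,\R))| \leq 2^{\aleph_0}$, and the BFT dichotomy (Theorem \ref{f:DynBFT}) then forces $(\mathbb{P}^n, \GL(n,\R))$ to be tame. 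Tameness then descends to the factor $X$.

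For the failure of tame$_\mathbf{1}$ when $n \geq 2$, my plan is to apply Proposition \ref{countable} in the spirit of Proposition \ref{two}(2). Let $p_* \in E(X, \GL(n,\R))$ be the pointwise limit of $kI$, so $p_*(v) = \infty$ for every $v \in X \setminus \{0\}$ and $p_*(0) = 0$. For each unit vector $u \in \mathbb{S}^{n-1}$ let $P_u$ denote the orthogonal projection onto the line $\R u$, and set $B_k^{(u)} := \tfrac{1}{k} P_u + k(I - P_u) \in \GL(n,\R)$. A direct computation shows that the pointwise limit $p_u := \lim_k B_k^{(u)}$ lies in $E(X,\GL(n,\R))$ and sends $\R u$ to $0$, sends every $v \in X \setminus (\R u \cup \{0\})$ to $\infty$, and fixes $\infty$. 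Hence $p_u$ coincides with $p_*$ off the line $\R u$ but differs from $p_*$ on $\R u \setminus \{0\}$, and distinct lines $\R u$ yield distinct $p_u$. Given any countable $C \subset X$, the set $C$ meets only countably many lines through the origin, so uncountably many $u \in \mathbb{S}^{n-1}$ satisfy $\R u \cap C \subseteq \{0\}$; for each such $u$ we have $p_u|_C = p_*|_C$ yet $p_u \neq p_*$. Proposition \ref{countable} therefore rules out a countable local base of $E(X,\GL(n,\R))$ at $p_*$, so the enveloping semigroup is not first countable and the system is not tame$_\mathbf{1}$.

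The main delicacy is organisational: one must confirm that the ambient tameness really does descend to the smaller group $\GL(n,\R)$, which works because the enveloping semigroup of a sub-action is automatically a closed subset of the ambient enveloping semigroup in the pointwise topology, making the BFT cardinality bound inherited. The identification of $\mathbb{P}^n/\mathbb{P}^{n-1}$ with the one-point compactification of $\R^n$ as a $\GL(n,\R)$-space, and the verification of the formulas for $p_*$ and $p_u$, are routine once the set-up is in place.
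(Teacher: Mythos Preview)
Your argument is correct, but it proceeds differently from the paper's main proof on both halves.

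For tameness, the paper argues intrinsically: for any $p \in E(X_n,\GL(n,\R))$ one checks that $V_p := p^{-1}(\R^n)$ is a linear subspace and that $p\rest V_p$ is linear, so $p$ is determined by the pair (subspace, linear map), giving $|E(X_n)| \le 2^{\aleph_0}$ and tameness via BFT. Your route via $\mathbb{P}^n \to \mathbb{P}^n/\mathbb{P}^{n-1} \cong X_n$ and Ellis' theorem on projective actions is a perfectly valid shortcut that trades a small self-contained computation for an appeal to a nontrivial external result; note, incidentally, that the cardinality detour is unnecessary, since a closed subspace of a Rosenthal compactum is again Rosenthal compact, so tameness passes to subgroups immediately. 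For the failure of tame$_{\bf 1}$, the paper's primary argument is structural: it isolates the affine subgroup $S = \{\begin{smallmatrix} a & b \\ 0 & 1 \end{smallmatrix}\}$ acting on an affine line $Y_1 \cup \{\infty\}$, shows this is not tame$_{\bf 1}$ via Proposition~\ref{two}(2), and then embeds $S \hookrightarrow \GL(n,\R)$ to transport the obstruction. Your argument is closer in spirit to the paper's alternative proof (Lemma~\ref{Xn}), which also exhibits the $\infty$-idempotent $p_\infty$ and, for each line $L$, an element $q_L$ agreeing with $p_\infty$ off $L$; the only difference is cosmetic---your $p_u$ collapses $\R u$ to $0$ whereas the paper's $q_L$ fixes $L$ pointwise---and your version has the minor advantage of working uniformly for all $n \ge 2$ without a separate reduction step.
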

 \begin{proof}
 	Let 
	$X_n:=\R^n \cup \{\infty\}$ be the one point compactification of $\R^n$ and $G \times X_n \to X_n$, 
	with $G = \GL(n,\R)$, be the canonical extension. 
	We have to show that $X_n$ is a tame $G$-system. 
 	For $p \in E(X,G)$ let
 	$$
	V_p : = p^{-1}(\R^n)=\{v \in \R^n : \ p(v) \in \R^n\}=\{v \in X : \ p(v) \neq \infty\}.
	$$
 	\sk 
 \noindent	{\bf Claim:}  The subset $V_p$ is a linear subspace (hence, closed) in $\R^n$ and the restriction 
	$p \rest V_p \colon V_p \to \R^n$ is a (continuous) linear map. 
 	\sk 
 	Indeed, let $g_i$ be a net in $G$ such that $\lim g_i=p$ in $E$. If $u,v \in V_p$ 
	then $\lim g_iu=p(u) \in \R^n$ and $\lim g_iv=p(v) \in \R^n$. 
	Then by the linearity of maps $g_i$ we obtain 
	$\lim g_i (c_1u+c_2v)=c_1p(u)+c_2p(v) \in \R^n$. Since  $V_p$ is finite dimensional, 
	the linear map $p \rest V_p  \colon V_p \to \R^n$ is necessarily continuous.  	\qed
 	
 	\sk
 	Using this claim we obtain that the cardinality of $E(X_n)$ is not greater than $2^{\aleph_0}$. 
 	Now the tameness follows by the dynamical BFT dichotomy (Theorem \ref{f:DynBFT}).

To see that $(X_n,G)$ is not tame$_{\bf 1}$ we first observe that the following lemma holds.
	The proof is by a verification of the conditions in Proposition \ref{two} (2).
	
	\begin{lem}
	Let $S < GL(2,\R)$ be the subgroup 
	$$
S = \left\{
	\begin{pmatrix}
	a & b \\ 0 & 1
	\end{pmatrix}
	: a > 0, \ b \in \R 
	\right\}.
	$$
	Then the action of $S$ on the space 
	$$
	Y_1 =\left\{\left( \begin{array}{cc}
 			t  \\
 			1  
 		\end{array}  \right) :  \ t \in \R \right\} \cup \{\infty\}
		 \subset \R^2 \cup \{\infty\}
		$$  
	is tame but not tame$_{\bf 1}$.
	\end{lem}

Denoting, for each $s \in \R$
$$
	Y_s =\left\{\left( \begin{array}{cc}
 			t  \\
 			s  
 		\end{array}  \right) :  \ t \in \R \right\} \cup \{\infty\}
		 \subset \R^2 \cup \{\infty\},
		$$
we have $X_2 = \bigsqcup _{s \in \R}Y_s$ and it follows that
$E(X_2,S) = E(Y_1,S)$.

Next observe that for any $n \geq 2$ the dynamical system $(X_2,S)$ is a subsystem 
of the system $(X_n, S)$
and we now consider 
$S$ as a subgroup of $GL(n,\R)$, embedded at the left top corner:
$$
A =
\begin{pmatrix}
	a & b \\ 0 & 1
\end{pmatrix}
\mapsto
\begin{pmatrix}
	A & {\bf{0}}_{2,n-2} \\ 
	{\bf{0}}_{n-2, 2} & {\bf {I}}_{n-2,n-2}
\end{pmatrix}
$$
Moreover, as under this embedding the action of $S$ on $X_n$ has the property that 
it acts as the identity on the last $n-2$ coordinates of a vector in $\R^n$,
it follows that $E(X_n, S) = E(X_2,S) = E(Y,S)$.
Now, since
$E(X_n,S) \subset E(X_n, \GL(n, \R))$ we conclude that 
$E(X_n, \GL(n,\R))$  is also not tame$_{\bf 1}$.
 \end{proof}

 The following lemma  presents another way of seeing that the action of 
 $\GL(2,\R)$ on $X_2 = \R^2 \cup \{\infty\}$ is not tame$_{\bf 1}$. 
 
 \begin{lem} \label{Xn}
 	The action $\GL(2,\R) \times X_2 \to X_2$ is not tame$_\mathbf{1}$. 
 \end{lem}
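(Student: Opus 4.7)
The plan is to apply Proposition \ref{countable}: exhibit a specific element $q \in E := E(X_2, \GL(2,\R))$ for which no countable set $C \subset X_2$ determines $q$, which forces $E$ to fail first countability at $q$. Conceptually this follows the pattern of Proposition \ref{two}(2), with $q$ playing the role of the parabolic idempotent $p_a$. The twist is that the linearity claim proved inside Theorem \ref{t:LinAreTame} rules out genuine loxodromic idempotents whose repulsing point is a single nonzero vector in $\R^2$ (the fibre $V_p$ would have to be a line that is not a subspace). The right substitute is a family of ``line--repulsing'' idempotents parametrised by the projective line.

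Take $q$ to be the pointwise limit of $g_n = nI$: then $q(0) = 0$, $q(\infty) = \infty$, and $q(v) = \infty$ for every $v \in \R^2 \setminus \{0\}$. For each line $\ell \subset \R^2$ through the origin, pick a rotation $R_\ell \in \mathrm{SO}(2)$ sending the $y$-axis onto $\ell$ and set
$$
h_n^{(\ell)} := R_\ell\,\mathrm{diag}(n, 1/n)\, R_\ell^{-1} \in \GL(2,\R).
$$
A direct computation, using that $\mathrm{diag}(n,1/n)$ blows up any vector with nonzero first coordinate while shrinking vectors on the $y$-axis to the origin, shows that $h_n^{(\ell)}$ converges pointwise on $X_2$ to the element $p_\ell \in E$ defined by $p_\ell(v) = 0$ for $v \in \ell$, $p_\ell(v) = \infty$ for $v \in \R^2 \setminus \ell$, and $p_\ell(\infty) = \infty$. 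Crucially $p_\ell \neq q$: they disagree at every nonzero $v \in \ell$.

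Given an arbitrary countable $C \subset X_2$, the set of lines through the origin spanned by elements of $C \cap (\R^2 \setminus \{0\})$ is countable, while the projective line $\mathbb{P}^1(\R)$ is uncountable. Choose a line $\ell$ disjoint from $C \setminus \{0, \infty\}$. Then $p_\ell$ and $q$ agree on all of $C$ (both fix $0$ and $\infty$, and for any other $c \in C$ one has $c \notin \ell$, so $p_\ell(c) = \infty = q(c)$), yet $p_\ell \neq q$. By Proposition \ref{countable} there is no countable local basis for the topology of $E$ at $q$, so $E$ is not first countable and $(X_2, \GL(2,\R))$ is not tame$_\mathbf{1}$.

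The only step requiring any actual verification is the pointwise identification of $p_\ell = \lim_n h_n^{(\ell)}$; everything else is a cardinality argument matching the structure of Proposition \ref{two}(2). The conceptual hurdle is simply recognising that the restriction forced by linearity of elements of $E$ on $V_p$ must be circumvented by allowing the ``repulsing set'' of the competing idempotents to be a line through the origin rather than a single point.
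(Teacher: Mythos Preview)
Your proof is correct and follows essentially the same approach as the paper: both exhibit the parabolic idempotent $q=p_\infty$ (the limit of the scalar matrices $nI$) and, for each line $\ell$ through the origin, an element of $E(X_2)$ that agrees with $q$ off $\ell$ but differs on it, then invoke the uncountability of $\mathbb{P}^1(\R)$ against any countable determining set via Proposition \ref{countable}. The only cosmetic difference is that the paper's competing elements $q_L=\lim\begin{pmatrix}1&0\\0&n\end{pmatrix}$ (after conjugation) act as the identity on the chosen line, whereas your $p_\ell=\lim R_\ell\,\mathrm{diag}(n,1/n)\,R_\ell^{-1}$ collapse the line to $0$; either variant works for the argument.
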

\begin{proof}
	 By Proposition \ref{countable} it is enough to show that there exists $p \in E(X_2)$ such that countable subsets $C(p)$ of $X_2$ cannot determine $p$. 
	 Take the following $\infty$-idempotent 
	 $$
	 p=p_{\infty} \colon X_2 \to X_2, \ p(x)=\infty \ \forall x \neq 0_2 \ \text{and} \ p(0_2)=0_2.
	 $$ 
	It really belongs to $E(X_2)$ because if $t_n \in \GL(2, \R)$ is the scalar matrix with 
	$t_n(x)=nx$ for every $x \in \R^2$, then $\lim t_n =p$ in $E(X_2)$. 	
	 Let, as before, $V_p : = p^{-1}(\R^2)$. Then for such $p$ we have $V_p=\{0_2\}$. 
	 Assume in contrary that $C(p)$ is a countable subset of $X_2$ which determines $p$. 
	 We can suppose that $C(p)$ does not contain $0_2$ (and also $\infty$). 
	 There exists a one-dimensional subspace $L$ in $\R^2$ which does not meet $C(p)$. 
	 Moreover, there exists $q_L \in E(X_2)$ such that $q_L^{-1}(\R^2)=L$ and $q_L(x)=x$ for every $x \in L$.	 
	 Up to a conjugation, we can suppose that 
	 	$$
	 L =\left\{\left( \begin{array}{cc} 
	 	a  \\
	 	0  
	 \end{array}  \right) :  \ a\in \R \right\},   \ \ \ g_n= 	\begin{pmatrix}
	 1 & 0 \\ 0 & n
 \end{pmatrix},    \ \ \ q_L =\lim g_n. 
	 $$   
	 Then $p(x)=q_L(x)=\infty$ for every $x \in C(p)$ (but clearly, $p \neq q_L$).  
	\end{proof}

\sk 

\begin{remark} 
	The enveloping semigroup $E(X_n)$ of the action from Theorem \ref{t:LinAreTame}   
	can be identified with the semigroup of all partial linear endomorphisms 
	of $\R^n$.  
To see this observe that the claim from the proof of Theorem \ref{t:LinAreTame} can be reversed. 
Namely, every partial linear endomorphism $f \colon V \to W$ of $\R^n$ defines an element $p \in E(X_n)$ such that 
$$
V=p^{-1}(\R^n), \ p(V)=W, \ 
p(x)=f(x) \ \forall x \in V, \ p(y)=\infty \ \forall y \notin V.
$$
 Moreover, that assignment is a semigroup isomorphism: (partial) composition corresponds
to the product of suitable elements from the enveloping semigroup. 
\end{remark}

\begin{remark}
	Theorem \ref{tri} predicts that $E(X_2)$ in Lemma \ref{Xn} should contain the
	Alexandroff compactification $A({\bf c})$, of a discrete space of size continuum. 
	The following subset of $E(X_2)$ is indeed a topological copy of $A({\bf c})$
	$$
	\{p_{\infty}\}\cup \{q_L: L \ \text{is a one-dimensional linear subspace in} \  \R^2\}.
	$$
\end{remark}

\br
 \begin{lem}\label{Y*}
	The action of $S$ on the space 
	$$
	Y_1^* =\left\{\left( \begin{array}{cc}
 			t  \\
 			1  
 		\end{array}  \right) :  \ t\in \R \right\} \cup \{\pm \infty\}
	$$  
	is  tame$_{\bf 1}$.
\end{lem}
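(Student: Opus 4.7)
The plan is to recognize $(Y_1^*,S)$ as a linearly ordered compact metric dynamical system and invoke Theorem \ref{t:LOT1}. Explicitly, the map $\begin{pmatrix}t\\1\end{pmatrix}\mapsto t$ (together with $\pm\infty\mapsto\pm\infty$) identifies $Y_1^*$ with the two--point compactification $[-\infty,+\infty]$ of $\R$. Each $g=\begin{pmatrix}a&b\\0&1\end{pmatrix}\in S$ acts as the affine map $t\mapsto at+b$ on $\R$, fixing both endpoints; since $a>0$, this is a strictly increasing homeomorphism of $[-\infty,+\infty]$. Thus $(Y_1^*,S)$ is a linearly ordered metric system and Theorem \ref{t:LOT1} gives tame$_\mathbf{1}$ at once.

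For a self--contained argument I would apply Proposition \ref{countable} directly. First I would show that every $p\in E:=E(Y_1^*,S)$ is monotone non--decreasing: if $g_\alpha\in S$ and $g_\alpha\to p$ pointwise, then for $s<t$ one has $g_\alpha(s)<g_\alpha(t)$ for every $\alpha$, and passing to the limit gives $p(s)\le p(t)$. Consequently the set $D_p\subset[-\infty,+\infty]$ of discontinuity points of $p$ is at most countable. Next, fix once and for all a countable dense set $D\subset Y_1^*$ (e.g.\ $D=\Q\cup\{\pm\infty\}$) and, for each $p\in E$, put $C(p):=D\cup D_p$, a countable subset of $Y_1^*$.

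The third step is to verify that $C(p)$ determines $p$ in $E$. Let $q\in E$ satisfy $q\rest C(p)=p\rest C(p)$. Then $q$ is also non--decreasing and agrees with $p$ on $D$. At any $x\in D_p$ we have $q(x)=p(x)$ by hypothesis; at $x\notin D_p$, $p$ is continuous at $x$, so by density of $D$ and monotonicity of $p$ and $q$,
$$p(x)=\sup_{c\in D,\,c<x}p(c)=\sup_{c\in D,\,c<x}q(c)=q(x^-)\le q(x)\le q(x^+)=\inf_{c\in D,\,c>x}q(c)=\inf_{c\in D,\,c>x}p(c)=p(x),$$
forcing $q(x)=p(x)$. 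Hence $p=q$, and Proposition \ref{countable} yields that $E$ is first countable at every $p$, i.e.\ $(Y_1^*,S)$ is tame$_\mathbf{1}$.

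The only real subtlety is the first step, namely the monotonicity of an arbitrary $p\in E$ at the endpoints $\pm\infty$; this is immediate because $S$ fixes each of $+\infty$ and $-\infty$ and $g(-\infty)<g(y)<g(+\infty)$ for every finite $y$ and every $g\in S$. Once monotonicity is secured, the rest is a standard Helly--type observation: a non--decreasing function on a compact interval is determined by its values on any countable dense set together with its (automatically countable) set of discontinuities.
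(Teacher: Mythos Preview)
Your proof is correct, and both of your arguments work. The approach, however, is genuinely different from the paper's.

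The paper proceeds by an explicit classification of $\Adh(Y_1^*,S)$: it lists every element (the families $p^o_{rs}$, $p^i_s$, $p_{\pm\infty}$, $p_{\pm\infty,s}$), observes that each is determined by its values on at most three points or on the integers, and then invokes Proposition \ref{countable}. This yields, as a byproduct, a complete concrete picture of the enveloping semigroup (cf.\ Figure~2), which is useful for the discussion that follows the lemma.

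Your route exploits the linear order on $Y_1^*\cong[-\infty,+\infty]$ and the fact that $S$ acts by increasing affine maps, so that every $p\in E$ is non-decreasing. Your first argument, appealing to Theorem \ref{t:LOT1}, is the slickest; note only that in the paper's organization this theorem appears \emph{after} Lemma \ref{Y*}, but there is no logical circularity since Theorem \ref{t:LOT1} does not rely on Lemma \ref{Y*}. Your second, self-contained argument is essentially the proof of Theorem \ref{t:lin-Helly} specialized to the interval: the determining set $C(p)=D\cup D_p$ matches the paper's general recipe $disc(p)\cup sing(X)\cup A$, with $sing(X)=\{\pm\infty\}$ already contained in your $D=\Q\cup\{\pm\infty\}$. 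One small point worth making explicit is that $\pm\infty$ are fixed by every $g\in S$ and hence by every $p\in E$, so the endpoint cases are handled trivially (as you note).

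In short: your argument is cleaner and more conceptual, while the paper's hands-on computation supplies the explicit structure of $E(Y_1^*,S)$ that motivates the surrounding remarks about factors of tame$_\mathbf{1}$ systems.
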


\begin{proof}
Clearly every element 
$p \in \Adh(Y_1^*, S)$ 
fixes the points $\pm \infty$.
Suppose now that $p \in \Adh(Y_1^*, S)$ satisfies 
$p 
\begin{pmatrix}
s \\
1 
\end{pmatrix} =
\begin{pmatrix}
s \\
1 
\end{pmatrix} 
$
for some $s \in \R$.
It is then easy to check that it has either the form 
$p^o_{0s}$ or $p^i_{0s}$, where 
for $s \in \R$,
$$
p^o_{0s} 
\begin{pmatrix}
t \\
1 
\end{pmatrix}
= 
\begin{cases}
\infty & t >s \\
\begin{pmatrix}
s \\
1 
\end{pmatrix} 
& t = s  \\
-\infty & t < s,
\end{cases}  
\qquad \qquad \quad 
p^i_{s} 
\begin{pmatrix}
t \\
1 
\end{pmatrix}
= 
\begin{pmatrix}
s \\
1 
\end{pmatrix}
{\text{for all $t \in \R$}}.
$$
Applying $g \in S$ with 
$g
\begin{pmatrix}
s\\
1 
\end{pmatrix} =
\begin{pmatrix}
r \\
1 
\end{pmatrix}
$
we get the elements $gp^o_{0s} =p^o_{rs}$, where
$$
p^o_{rs} 
\begin{pmatrix}
t \\
1 
\end{pmatrix}
= 
\begin{cases}
\infty & t >s \\
\begin{pmatrix}
r \\
1 
\end{pmatrix} 
& t = s  \\
-\infty & t < s,
\end{cases}  
$$
and 
 $gp^i_s = p^i_r$.

To complete the list of all the elements of $\Adh(Y_1^*,S)$ we have to add the limits:
$$
p_\infty = \lim_{s \to -\infty} p^o_{0s}, \qquad 
p_\infty
\begin{pmatrix}
t \\
1 
\end{pmatrix}
= \infty
\quad {\text{for all $t \in \R$}},
$$
and
$$
p_{-\infty}= \lim_{s \to \infty} p^o_{0s}, \qquad
p_{-\infty} 
\begin{pmatrix}
t \\
1 
\end{pmatrix}
= -\infty
\quad {\text{for all $t \in \R$.}}
$$
and finally
$$
p_{\pm\infty ,s}= \lim_{r \to \pm\infty} p^o_{rs}, 
\qquad
p_{\pm\infty} 
\begin{pmatrix}
t \\
1 
\end{pmatrix}
= 
\begin{cases}
\infty & t >s \\
\pm \infty
& t = s  \\
-\infty & t < s.
\end{cases}  
$$

\begin{figure}[h] \label{Fig2} 
    \begin{center}
		\includegraphics[width=5cm,frame]{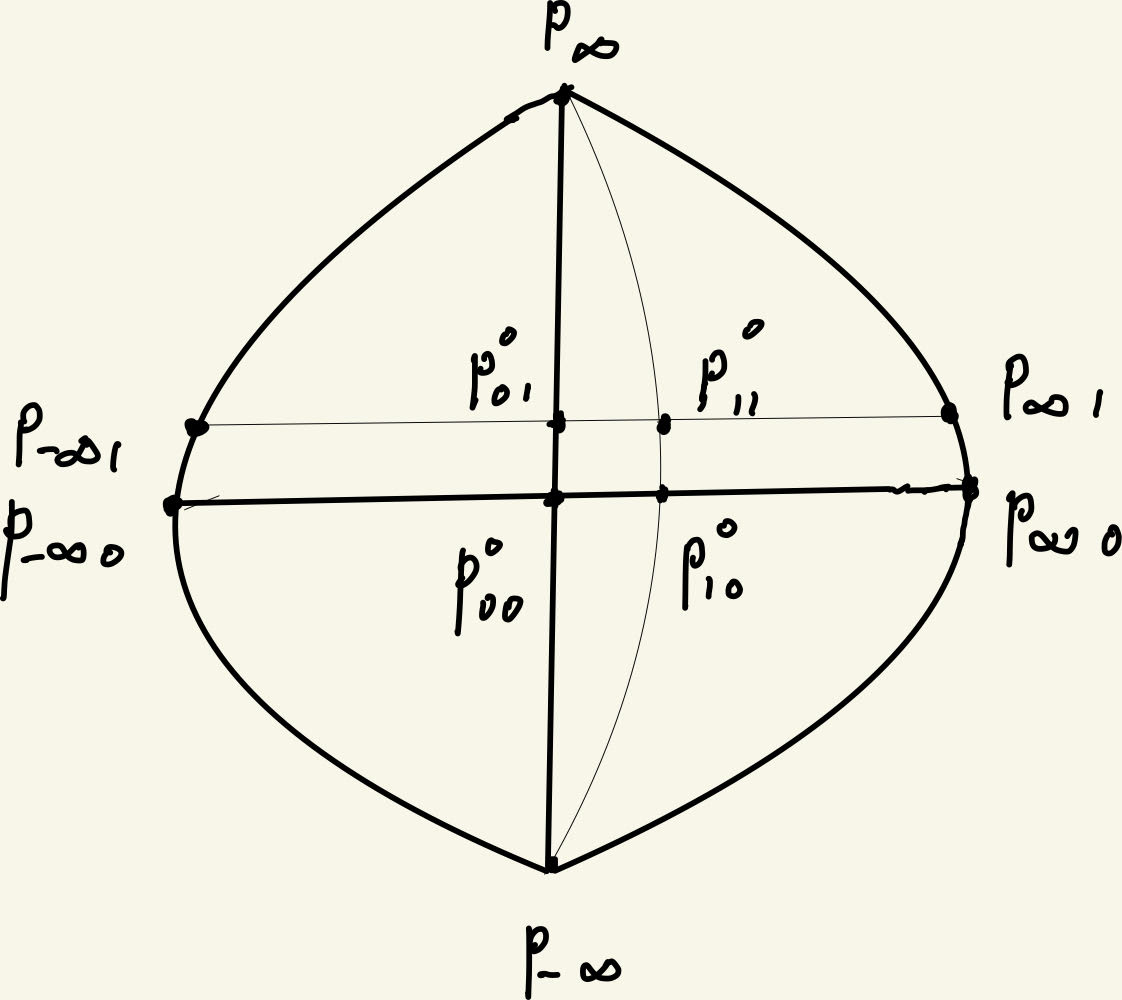}
	\end{center}
	\caption{A map of $E(Y_1^*,S)$}
\end{figure}

Now clearly each 
$p^o_{rs}, \  p^i_{s}, \ r, s \in \R$, 
is determined by its value at three points,
while the elements $p_{\pm \infty}$ and $p_{\pm \infty, \infty}$
are determined by their values on
the set 
$$
\left\{
\begin{pmatrix}
n \\
1 
\end{pmatrix}
 : n \in \Z 
 \right\}.
 $$
Now apply Proposition \ref{countable}.
\end{proof}

As in Examples \ref{e:EllisProj} and \ref{e:Akin}
we observe here a situation 
where a tame$_{\bf 1}$ system $(Y_1^*,S)$ admits the system $(Y_1,S)$ as a factor,
and the latter has an enveloping semigroup which is not first countable.

\br  
\section{Order preserving actions} \label{s:Order} 
\sk 

A standard example of a circularly ordered space is the circle $\T$. 
An abstract circular (some authors prefer the name, \textit{cyclical}) order $R$ on a set $X$ 
can be defined as a certain ternary relation.
Intuitively, it is a linear (i.e., total) order which has been bent into a ``circle".  
 For formal classical definitions and related new dynamical topics see  \cite{GM-c,GM-int}.

Let $(X,T)$ be a compact dynamical system. 
If $X$ is a circularly ordered (c-ordered) space, 
compact in its interval topology, 
and every $t$-translation $X \to X$ is circular ordered preserving (COP) 
then we say that $(X,T)$ is a \textit{circularly ordered system}. 

The class of  \textit{linearly ordered dynamical systems} (LOP) is defined similarly. 
Note that every linearly ordered \textbf{compact} $T$-system is a circularly ordered $T$-system. 
A prototypic example of a circularly ordered system is the circle $\T$ 
equipped
with the action of 
the group
$H_+(\T)$ (or, some of its subgroup) on $\T$. 
Several important Sturmian like symbolic dynamical systems
are circularly ordered dynamical systems, hence tame. 
This is so because every circularly ordered system is tame. See the details 
in \cite{GM-c}. 

Let $X, Y$ be linearly (circularly) ordered sets. Below we denote by $M_+(X,Y)$ the set of 
linear (circular) order preserving maps from $X$ into $Y$. Similarly, by $H_+(X)$ we denote the  group of all LOP (resp., COP) homeomorphisms $X \to X$. 
When $X$ is compact, 
$H_+(X)$ is a topological group carrying the usual compact open topology.

Our results show that
there are naturally defined circularly ordered tame systems of all three types. 
Moreover, the (pointwise compact) space $M_+(X,Y)$ is first countable for every compact metrizable linearly ordered spaces $X, Y$ (Theorem \ref{t:lin-Helly}). 
This result is a generalized version of the Helly theorem (for $X=Y=[0,1]$).  

This 
fact
implies that every linearly ordered metric dynamical $T$-system is tame$_\mathbf{1}$ (Theorem \ref{t:LOT1}). 
In contrast, the circular version of the Helly space $M_+(\T,\T)$ is not first countable (Remark \ref{r:c-Helly}). 
This 
result demonstrates once more 
the relative complexity of circular orders 
when compared with linear orders. 

\br 
\textbf{Orderable enveloping semigroups and the Tame$_\mathbf{2}$ class}
\sk 

By \cite{LB} in linearly ordered topological spaces (LOTS) the separability is hereditary. That is, in a separable LOTS every topological subspace is separable. This
can be easily extended to the case of c-ordered spaces, using the 
\textit{covering space}, \cite{GM-c} (which is linearly ordered).

\begin{lem}
	Let $R$ be a  circular order on $X$ such that $X$ in its interval topology is separable.  Then every topological subspace of $X$ is separable. 
\end{lem}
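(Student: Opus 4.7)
The plan is to reduce to the known LOTS result cited from \cite{LB}. The natural bridge is the linearly ordered covering space $p \colon \tilde X \to X$ of a circularly ordered space $X$, as constructed in \cite{GM-c}: informally, $\tilde X$ is obtained by ``unwinding'' the circle into a line, so that $\tilde X$ carries a natural linear order whose interval topology makes it a LOTS, while $p$ is a continuous surjection which is locally an order-preserving homeomorphism and has countable (indeed $\Z$-indexed) fibers.

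First I would verify that $\tilde X$ is separable. Given a countable dense $D \subseteq X$, the preimage $p^{-1}(D)$ is countable, and density in $\tilde X$ follows from $p$ being a local homeomorphism: any nonempty open $U \subseteq \tilde X$ contains a smaller open $V$ which is mapped homeomorphically by $p$ onto an open subset of $X$; that open subset meets $D$, and pulling back to $V$ produces a point of $p^{-1}(D) \cap U$. Next, for any subspace $Y \subseteq X$, consider $p^{-1}(Y) \subseteq \tilde X$. This is a subspace of a separable LOTS, hence separable by \cite{LB}. Finally, $Y = p(p^{-1}(Y))$ is a continuous image of a separable space, so it is separable too.

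The only point requiring care is confirming that the covering really has the properties invoked, namely that its interval topology makes it a LOTS and that $p$ behaves as described; but this is precisely what is packaged into the covering-space construction of \cite{GM-c}, so no new work is needed. An even more direct variant avoids the covering entirely: if $Y = X$ there is nothing to prove, and otherwise one picks a point $x_0 \in X \setminus Y$, cuts the circular order at $x_0$ to obtain a linear order on $X \setminus \{x_0\}$ whose interval topology agrees with the subspace topology from $X$, observes that $X \setminus \{x_0\}$ is separable (use $D \setminus \{x_0\}$), and applies \cite{LB} to $Y \subseteq X \setminus \{x_0\}$ directly. Either route is routine once the order-theoretic bookkeeping is in place, and the principal obstacle—matching the interval topology of the linearization with the ambient topology—is handled by results already recorded in \cite{GM-c}.
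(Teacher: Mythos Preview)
Your proposal is correct and follows essentially the same route as the paper: linearize the circularly ordered space, invoke the Lutzer--Bennett result \cite{LB} for LOTS, and push separability forward along the continuous projection. The paper uses the simpler single-point split $X(c)=[c^-,c^+]$ from \cite[Prop.~2.10]{GM-c} (a map that is 2-to-1 at one point and 1-to-1 elsewhere) rather than the $\Z$-indexed universal cover you describe, but the argument structure is identical.
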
 
\begin{proof} 
	By \cite[Prop. 2.10]{GM-c} for every $c \in X$ there exists a LOTS $X(c):=([c^-,c^+],\leq_c)$ such that $X(c)$ we get from $X$ splitting the point $z$ into two new points $z^-, z^+$. 
The natural projection $q\colon [c^-,c^+] \to X$ (identifying the endpoints of $X(c)$) is a c-order preserving quotient map. 
	Here, the linear order $\leq_c$ on $X(c)$ is defined by 
	$$z^{-} \leq_z a \leq_z z^{+}, \ \ \ \  a <_z b \Leftrightarrow [z,a,b] \ \ \ \forall a \neq b \neq z \neq a, $$
	where $[z,a,b]$ indicates the fact that $(z,a,b) \in R$ (the triple is positively oriented). 

	Moreover, it is easy to see that the restriction of $q$ on every interval $[a,b]_{\leq_z}$ onto $[a,b]_{\circ}$  
	is 
	an isomorphism of linearly ordered sets, where $c^- \neq a$ or $c^+ \neq b$. 
	The topologies on $X$ and $X(c)$ are both interval topologies. It follows that if $D$ is a topologically dense subset of $X$ then the same set $D$ is dense in $[c^-,c^+]$. Hence, if the c-ordered space $X$ is separable then
the LOTS $X(c)$ is also separable. Since $X(c)$ is hereditarily separable and $q \colon [c^-,c^+] \to X$ is continuous onto, we obtain that $X$ is also hereditarily separable. 
\end{proof}

In particular, if the enveloping semigroup is c-ordered then its separability is hereditary. In view of our Definition \ref{d:Tame_k} this leads to the following sufficient condition.

\begin{thm}  \label{t:EisCODS} 
	If the enveloping semigroup $E(X)$, as a compact space, is circularly ordered 
	then the original metrizable dynamical system $X$ is tame$_\mathbf{2}$. 
\end{thm}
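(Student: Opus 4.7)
The plan is to derive both defining conditions of Tame$_\mathbf{2}$—tameness of $(X,T)$ and hereditary separability of $E(X)$—directly from the hypothesis, leveraging the lemma just proved above and the dynamical BFT dichotomy.

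First, I would record that since $(X,T)$ is a compact metric system, its enveloping semigroup $E(X)$ is separable (as noted in Definition \ref{d:Tame_k}). Combined with the hypothesis that $E(X)$ is compact and circularly ordered, the preceding lemma applies verbatim and yields that \emph{$E(X)$ is hereditarily separable}. This already covers half of the conclusion, and indeed is the content condition (2) of Definition \ref{d:Tame_k}.

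Next, I would establish tameness by appealing to the dynamical BFT dichotomy (Theorem \ref{f:DynBFT}). According to that dichotomy, either $E(X)$ is a separable Rosenthal compactum (i.e. $(X,T)$ is tame), or $E(X)$ contains a homeomorphic copy of $\beta\N$. The second alternative is incompatible with what we just obtained: a hereditarily separable compact Hausdorff space has cardinality at most $2^{\aleph_0}$, whereas $|\beta\N| = 2^{2^{\aleph_0}}$ (equivalently, $\beta\N$ itself contains uncountable discrete subspaces and so is not hereditarily separable; any subspace of a hereditarily separable space is hereditarily separable). Therefore the second alternative is excluded, $E(X)$ is a Rosenthal compactum, and $(X,T)$ is tame.

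Combining these two facts—that $(X,T)$ is tame and that $E(X)$ is hereditarily separable—gives exactly that $(X,T)\in \mathrm{Tame}_{\mathbf{2}}$ by Definition \ref{d:Tame_k}(2). There is essentially no hard obstacle here: the substance of the argument has already been packed into the preceding lemma (transferring separability to hereditary separability via a splitting/covering LOTS and the classical fact that separable LOTS are hereditarily separable), and what remains is simply to read off tameness from the cardinality/discreteness gap that separates hereditarily separable compacta from $\beta\N$.
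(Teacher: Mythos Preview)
Your argument is correct and follows essentially the same route as the paper: both invoke the preceding lemma to conclude that the separable c-ordered space $E(X)$ is hereditarily separable, which is the content of Definition~\ref{d:Tame_k}(2). One minor remark: by that definition, tame$_\mathbf{2}$ means \emph{only} that $E(X)$ is hereditarily separable, so your separate verification of tameness via the BFT dichotomy is redundant (it is already a consequence, recorded in Proposition~\ref{r:incl}).
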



\begin{cor} \label{cor:St}  
	The Sturmian like cascades  
	$\Split(\T, R_\al; A)$ are tame$_\mathbf{2}$ (but not HNS).  	
\end{cor}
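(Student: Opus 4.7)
The plan is to invoke Theorem \ref{t:EisCODS}, so I aim to show that the enveloping semigroup of the Sturmian-like cascade $X = \Split(\T, R_\al; A)$ is a circularly ordered compact space. First I would observe that $X$ carries a natural circular order inherited from $\T$ (being obtained by splitting certain orbit points), and every $T$-translation is c-order preserving, making $(X,T)$ a circularly ordered dynamical system. Consequently, every $p \in E(X,T)$, as a pointwise limit of COP homeomorphisms, is itself a c-order preserving map $X \to X$.

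Next, I would carry out the structural analysis of Proposition \ref{p:AlmostAut} in this setting (the prototype $A = \{n\al : n \in \Z\}$ already covers the argument, the general $R_\al$-invariant countable $A$ being identical in spirit). The enveloping semigroup decomposes as
$$
E(X,T) = \{T^n : n \in \Z\} \cup \{p_\ga^\pm : \ga \in \T\},
$$
where the $T^n$ are isolated points and $\Adh(X,T) = \{p_\ga^\pm\}$ is homeomorphic to the two-arrows (``double circle'') space. I would then promote this topological description to a \emph{circular order} on $E(X,T)$ by taking the natural c-order on the double circle and inserting each $T^n$ into the single ``gap'' between $p_{n\al}^-$ and $p_{n\al}^+$. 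Using the explicit description of convergence given in item (1) of Proposition \ref{p:AlmostAut} (namely, $T^{n_i} \to p_\ga^-$ iff $n_i\al \nearrow \ga$ and $T^{n_i} \to p_\ga^+$ iff $n_i\al \searrow \ga$), I would verify that the interval topology of this c-order coincides with the pointwise convergence topology on $E$.

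Having established that $E(X,T)$ is a c-ordered compact space, Theorem \ref{t:EisCODS} directly yields that $X$ is tame$_\mathbf{2}$. For the assertion that $X$ is not HNS, I would invoke the characterization of \cite{G-M-U}: it suffices to check that $E(X,T)$ is not metrizable, and this is immediate because $E(X,T)$ contains the two-arrows space as a subspace (Proposition \ref{p:AlmostAut}(4)), which is separable but not second countable.

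The main obstacle in this plan lies in the topological verification at the third step: one must confirm that inserting the discrete set $\{T^n\}$ at the orbit gaps of the double-circle c-order genuinely reproduces the pointwise topology on $E$, handling carefully the four kinds of convergence (sequences from the $\{T^n\}$ part converging from either side to a $p_\ga^\pm$, and convergences within the double circle itself at points inside or outside the orbit $A$).
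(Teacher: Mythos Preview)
Your proposal is correct and follows essentially the same approach as the paper: both arguments invoke Theorem \ref{t:EisCODS} after establishing that $E(X,T)$ is a circularly ordered compact space, and both obtain ``not HNS'' from the non-metrizability of $E$. The only difference is that the paper cites \cite[Cor.~6.5]{GM-c} for the c-order structure on $E$ (describing it as the subset $\T_\T \cup \Z$ of the lexicographic c-order on $\T \times \{-,0,+\}$), whereas you reconstruct this explicitly from the description in Proposition \ref{p:AlmostAut}; your insertion of $T^n$ in the gap between $p_{n\al}^-$ and $p_{n\al}^+$ is precisely that lexicographic order.
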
  
\begin{proof}
We can apply Theorem \ref{t:EisCODS} because for these systems the enveloping semigroup 
$E$ becomes (by \cite[Cor. 6.5]{GM-c}) a circularly ordered cascade, where $E=\T_{\T} \cup \Z$ is a c-ordered compact (nonmetrizable) subset of 
the c-ordered lexicographic order $\T \times \{-,0,+\}$. 
\end{proof}

%

\sk 
\begin{question} \label{q:c-ord}
	For which c-ordered systems the enveloping semigroup is c-ordered (at least as a compact space) ? 
\end{question}

\br
	
\textbf{Generalized Helly theorem and Tame$_\mathbf{1}$ class}
\sk 

It is a well known fact in classical analysis that every order preserving bounded real
valued  function $f \colon X \to \R$ on an interval $X \subseteq \R$ has one-sided limits. 
This can be extended to the case of linear order preserving functions 
$f \colon X \to Y$ such that $X$ is first countable and $Y$ is sequentially compact; see \cite{FS}. The following lemma is easy to verify.

\begin{lem} \label{l:propSIDE} Let $p \colon X \to Y$ be LOP where $X$ and $Y$ 
are
separable metrizable LOTS (linearly ordered topological spaces) and $Y$ is compact.
Then
	\begin{enumerate}
		\item $p$ has one-sided limits $p(a^-), p(a^+)$ at any $a \in X$ and $p(a) \in [p(a^-), p(a^+)]$. 
		
		\item $p$ is continuous at $a$ if and only if $p(a^-)=p(a^+)$ if and only if $p(a^-)=p(a^+)=p(a)$. 
		
		\item 
		$p$ has at most countably many discontinuity points. 
	\end{enumerate} 
\end{lem}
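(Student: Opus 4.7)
The plan is to prove the three parts in order, leveraging first countability of $X$ (separable metric), compactness of $Y$, and the fact that a compact metrizable LOTS has the order completeness property (monotone sequences converge to their suprema/infima, which exist).

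For part (1), fix $a \in X$. If $a$ is a limit from below, i.e.\ there exist points $x < a$ with $x \to a$, pick, using first countability of $X$, a sequence $x_n \nearrow a$. Because $p$ is LOP, the image sequence $p(x_n)$ is non-decreasing and bounded above by $p(a)$; compactness of $Y$ together with the LOTS structure guarantees that this monotone sequence converges to a point $y^{-} := \sup_n p(x_n) \in Y$ with $y^{-} \le p(a)$. I would then verify independence of the sequence by an interlacing argument: if $x_n \nearrow a$ and $x_n' \nearrow a$ are two such sequences, monotonicity of $p$ forces the two suprema to coincide, and I set $p(a^{-}) := y^{-}$. If $a$ has no approach from below (isolated on the left), I adopt the convention $p(a^{-}) := p(a)$. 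The definition of $p(a^{+})$ and the inequality $p(a) \le p(a^{+})$ are dual.

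For part (2), if $p$ is continuous at $a$ then any sequence converging to $a$ (in particular one-sided ones) must map to a sequence converging to $p(a)$, forcing $p(a^{-}) = p(a) = p(a^{+})$. Conversely, the inequality chain from (1) shows that $p(a^{-}) = p(a^{+})$ already implies $p(a^{-}) = p(a) = p(a^{+})$. For continuity: given $z_n \to a$ in $X$, partition $\{z_n\}$ into the subsequences with $z_n < a$, $z_n = a$, $z_n > a$. Each infinite part can, by passing to a monotone sub-subsequence (using first countability of $X$ and monotone convergence in compact LOTS), be compared with approach sequences from the corresponding side, so every cluster point of $p(z_n)$ lies in $[p(a^{-}), p(a^{+})] = \{p(a)\}$. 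Hence $p(z_n) \to p(a)$.

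For part (3), with each discontinuity $a$ associate the open interval $J_a := (p(a^{-}), p(a^{+}))$, which by part (2) is non-empty whenever $a$ is a discontinuity point (with the convention above taking care of edge cases by effectively forcing $p(a^{-}) < p(a)$ or $p(a) < p(a^{+})$, still yielding a non-empty open interval in $Y$). I would then show the intervals $\{J_a\}$ are pairwise disjoint: for $a < b$ both discontinuous, if there exists $c$ with $a < c < b$ then monotonicity gives $p(a^{+}) \le p(c) \le p(b^{-})$; if $a,b$ are consecutive then the conventions place $J_a$ to the left of $p(a) \le p(b)$ and $J_b$ to the right, so again $J_a \cap J_b = \emptyset$. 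Since $Y$ is separable, it contains at most countably many pairwise disjoint non-empty open sets, so the set of discontinuities is countable.

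The routine part is (1) and (2); the step needing most care will be (3), specifically verifying disjointness of the $J_a$ across the case where $a$ and $b$ are consecutive in $X$ and the one-sided limits are defined by the isolated-side convention rather than by genuine approach from that side.
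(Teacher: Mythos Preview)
The paper does not actually supply a proof of this lemma; it merely remarks that it is ``easy to verify'' and cites \cite{FS} for the existence of one-sided limits in this generality. Your approach is the natural one and parts (1) and (2) are fine. There is, however, a genuine gap in part (3).

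You assert that $J_a = (p(a^-), p(a^+))$ is non-empty whenever $a$ is a discontinuity, invoking part (2). But part (2) only yields $p(a^-) < p(a^+)$; in a general LOTS this does not force the open interval $(p(a^-), p(a^+))$ to be non-empty, because $p(a^-)$ and $p(a^+)$ may be \emph{consecutive} elements of $Y$ (a jump). For a concrete instance take $Y = \{0,1\}$, $X = [0,1]$, and $p = \ch_{(1/2,\,1]}$: then $a = 1/2$ is a discontinuity but $J_{1/2} = (0,1)_Y = \emptyset$. Your parenthetical about the isolated-side convention does not rescue this, since even a genuine two-sided discontinuity can land on a jump of $Y$.

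The patch is short. Since $Y$ is compact metrizable, it has at most countably many jumps (each jump determines a proper clopen set, and a compact metric space has only countably many clopen sets --- this is exactly the argument of Lemma \ref{l:sing}). Moreover, a given jump $(y_1,y_2)$ of $Y$ can arise from at most one discontinuity: if $a<b$ both satisfied $p(a^\pm)=p(b^\pm)=(y_1,y_2)$, then $p(x)\ge y_2$ for all $x>a$ and $p(x)\le y_1$ for all $x<b$ (these inequalities hold whether the relevant one-sided limit is genuine or conventional), which is contradictory both when $(a,b)\ne\emptyset$ and when $a,b$ are consecutive. Hence the discontinuities with empty $J_a$ are at most countable, and your disjointness-plus-separability argument handles the remaining ones.
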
 
%

\br 
Let $(X,\leq)$ be a linearly ordered set. Let us say that 
$u \in X$ is a \textit{singular point} 
and write $u \in sing(X)$, 
if there is a maximal element below $u$ or a minimal element above it. 
It is equivalent to saying that 
for every $v \neq u$, either $[u,v)$ or $(v,u]$ is a clopen set.

\begin{lem} \label{l:sing}  Let $(X,\leq)$ be a linearly ordered compact metric space. 
	Then $sing(X,\leq)$ is at most countable. 
\end{lem}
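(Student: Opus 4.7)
The plan is to parametrize singular points by the jumps they create. Call a pair $(a,b) \in X \times X$ a \emph{jump} if $a < b$ and the open interval $(a,b)$ is empty. Observe that a point $u \in X$ lies in $\mathrm{sing}(X,\leq)$ precisely when $u$ is the right endpoint of a jump (i.e., has an immediate predecessor $u^-$) or the left endpoint of a jump (i.e., has an immediate successor $u^+$). Since each jump contributes at most two singular points (its two endpoints), it suffices to show that the collection $J$ of all jumps in $X$ is at most countable.

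Next I would stratify $J$ by size using the metric: $J = \bigcup_{n \in \N} J_n$, where $J_n = \{(a,b) \in J : d(a,b) > 1/n\}$. It is then enough to prove that each $J_n$ is finite. Suppose for contradiction that some $J_n$ is infinite and choose a sequence of distinct jumps $(a_k, b_k)_{k \in \N} \subset J_n$. Note that the left endpoints $a_k$ are pairwise distinct, because the immediate successor of $a$ is uniquely determined. Using compactness of $X$, pass to a subsequence so that $a_k \to a$, and then to a further subsequence so that $b_k \to b$. Since $d(a_k, b_k) > 1/n$ we get $d(a,b) \geq 1/n > 0$.

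The key order-theoretic step is to extract a further monotone subsequence of $(a_k)$, which is possible in any linearly ordered set. Suppose (WLOG) the subsequence is strictly increasing, $a_1 < a_2 < \cdots$. Because $(a_k, b_k)$ is empty and $a_{k+1} > a_k$, the only way $a_{k+1}$ can avoid $(a_k, b_k)$ is if $a_{k+1} \geq b_k$, so that $a_k < b_k \leq a_{k+1}$. In a compact LOTS the metric topology coincides with the order topology, hence monotone sequences converge to their sup/inf; therefore $a_{k+1} \to a$ and, by the order-sandwich $a_k \leq b_k \leq a_{k+1}$, also $b_k \to a$. This forces $d(a_k, b_k) \to 0$, contradicting $d(a_k, b_k) > 1/n$. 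The strictly decreasing case is handled symmetrically by noting $a_{k+1} < b_{k+1} \leq a_k$.

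The main obstacle, such as it is, lies in the order-sandwich step: one must ensure that convergence of $a_k$ in the metric genuinely forces the intermediate $b_k$ to converge to the same limit. This is exactly where the identification of the metric topology with the order topology on a compact LOTS is used, together with the fact that basic order-neighborhoods are intervals, so that eventual containment $a_k, a_{k+1} \in (c,d)$ automatically implies $b_k \in (c,d)$. Once this is in place, each $J_n$ is finite, $J$ is countable, and thus $\mathrm{sing}(X,\leq)$ is countable, completing the proof.
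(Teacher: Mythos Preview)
Your proof is correct, but it takes a genuinely different route from the paper's.

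The paper's argument is a one-line trick: to each $u \in \mathrm{sing}(X,\leq)$ it assigns a nonempty clopen half-interval determined by the adjacent jump (e.g.\ if $u$ has an immediate successor $u^+$ then $(-\infty,u] = (-\infty,u^+)$ is clopen), observes that this assignment is injective, and then invokes the general fact that a compact metric space has only countably many clopen subsets (any clopen set is a finite union of basic open sets from a fixed countable base). No metric estimates are needed beyond second countability.

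Your approach is more hands-on: you parametrize singular points by jumps, stratify the jumps by metric width $J_n = \{(a,b): d(a,b) > 1/n\}$, and show each $J_n$ is finite via a monotone-subsequence/sandwich argument in the order topology. This works and is self-contained, but it is longer and uses the metric more heavily (both to stratify and to derive the contradiction). The paper's proof buys brevity and a cleaner conceptual reason --- countability of $\mathrm{clop}(X)$ --- while yours has the minor advantage of not needing to identify the right clopen set or to check injectivity of that assignment. A small redundancy in your write-up: once you extract the monotone subsequence and run the sandwich, the preliminary passage to a convergent subsequence of the $b_k$ (and the point $b$) is never actually used; you could drop it.
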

\begin{proof}
	For every $u \in sing(X)$ choose exactly one clopen (nonempty) set $[u,v)$
	or $(v,u]$, where $v \neq u$. 
	This assignment defines a  1-1 map from $sing(X)$ 
	into the set $clop(X)$ of all clopen subsets of $X$. 
	Now observe that $clop(X)$ is countable for every compact metric space (take a countable 
	basis $\mathcal{B}$ of open subsets; 
	then every clopen subset is a finite union of some members of $\mathcal{B}$).
\end{proof}

Recall that the Helly space $M_+([0,1],[0,1])$ is first countable 
(see for example, \cite[page 127]{SS}).  
The following theorem 
was inspired by this classical fact.

\begin{thm}[Generalized Helly space] \label{t:lin-Helly}
	Let $X$ and $Y$ be linearly ordered sets with their interval topologies such that 
	$X$ and $Y$ are compact metric spaces. Then the (compact) set $M_+(X,Y)$ of all LOP maps is first countable in the pointwise
	topology. 
\end{thm}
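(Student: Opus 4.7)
The plan is to invoke Proposition \ref{countable} with $E = M_+(X,Y)$ (which is pointwise compact, since in the Hausdorff LOTS $Y$ the order is closed and thus preserved under pointwise limits). So for each $p \in M_+(X,Y)$ I need a countable set $C \subseteq X$ that ``determines'' $p$ inside $M_+(X,Y)$: whenever $q \in M_+(X,Y)$ satisfies $q|_C = p|_C$, then $q = p$.

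I will take
\[
C = D \,\cup\, \mathrm{sing}(X) \,\cup\, \mathrm{disc}(p) \,\cup\, \{\min X, \max X\},
\]
where $D$ is a countable dense subset of the compact metric space $X$, $\mathrm{sing}(X)$ is the set of singular points of $X$ (countable by Lemma \ref{l:sing}), $\mathrm{disc}(p)$ is the set of discontinuities of $p$ (countable by Lemma \ref{l:propSIDE}(3)), and we include the extrema of $X$ if they exist. This is a countable union of countable or finite sets.

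Given $q \in M_+(X,Y)$ with $q|_C = p|_C$, I must show $q(x)=p(x)$ for every $x \in X$. The case $x \in C$ is trivial, so suppose $x \notin C$. Then $x$ is a non-extreme, non-singular continuity point of $p$. Non-singularity of $x$ from both sides together with density of $D$ allows one to choose $c_n, d_n \in D$ with $c_n \nearrow x$ and $d_n \searrow x$; these sequences exist because between any $y<x$ and $x$ the open interval is nonempty (by non-singularity from the left) and hence meets $D$, and symmetrically on the right. By Lemma \ref{l:propSIDE}(1,2) applied to $p$ at the continuity point $x$, the monotone sequences $p(c_n)$ and $p(d_n)$ converge to $p(x^-)=p(x^+)=p(x)$. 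Since $q|_C = p|_C$, we have $q(c_n) = p(c_n) \to p(x)$ and $q(d_n) = p(d_n) \to p(x)$. Finally, $q$ being LOP gives the squeeze $q(c_n) \le q(x) \le q(d_n)$, and since the order on the compact Hausdorff LOTS $Y$ is closed, passing to the limit forces $q(x)=p(x)$. Proposition \ref{countable} then yields first countability of $M_+(X,Y)$ at $p$.

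The main obstacle is choosing the correct auxiliary countable data for $C$: each of the four pieces is needed. Dropping $\mathrm{sing}(X)$ destroys the ability to approach $x$ from one side through $D$; dropping $\mathrm{disc}(p)$ breaks the identity $p(x^{\pm})=p(x)$ used in the squeeze; omitting the extrema (when they exist) leaves only a one-sided squeeze, which is insufficient to pin down $q(x)$. Once the right $C$ is in hand, the rest is a routine squeeze in a closed-ordered compact space.
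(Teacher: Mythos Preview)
Your proof is correct and follows essentially the same route as the paper: both apply Proposition \ref{countable} with the determining set $C$ built from a countable dense subset of $X$, the set $\mathrm{sing}(X)$, and $\mathrm{disc}(p)$, and then run a squeeze argument at non-singular continuity points of $p$. The paper phrases the squeeze via a single open interval $(x_1,x_2)$ and a contradiction, while you use monotone sequences from $D$; this is a cosmetic difference.

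Your inclusion of $\{\min X,\max X\}$ in $C$ is in fact a genuine improvement: the paper's argument tacitly assumes a two-sided open interval around $x_0$, which is unavailable at an endpoint. For instance, with $X=[0,1]$, $Y=[-1,1]$, $p=\mathrm{id}$ and $q$ agreeing with $p$ on a dense subset of $(0,1]$ but with $q(0)=-1$, one has $q\in M_+(X,Y)$, $q\ne p$, yet $q$ agrees with $p$ on $\mathrm{disc}(p)\cup\mathrm{sing}(X)\cup A$ if $0\notin A$. Your extra two points close this gap.
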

\begin{proof}  Let $p \in M_+(X,Y)$. 
	Lemma \ref{l:propSIDE} guarantees that $p$ has at most countably many discontinuities. 
	Denote this set by $disc(p)$.  
	Since $X$ is compact and metrizable, there exists a countable dense subset $A$ in $X$. 
	By Lemma \ref{l:sing} the set $sing(X)$ is countable. 
	Consider the following countable set 
	$$
	C:=disc(p) \cup sing(X) \cup A.
	$$
	It is enough to show that $C$ satisfies the condition of Lemma \ref{countable} with $E:= M_+(X,Y)$.	
	So, let $q \in M_+(X,Y)$ be such that $q(c)=p(c)$ \ $\forall x \in C$. We have to show that 
	$q(c)=p(c)$ \ $\forall x \in X$. 
	
	Assuming the contrary, let $q(x_0) \neq p(x_0)$ for some $x_0 \in X$. Then, clearly, 
	$x_0 \in cont(p)$. 
	Choose $y_1,y_2 \in Y$ such that $p(x_0) \in (y_1,y_2)$ and 
	$$q(x_0) \notin (y_1,y_2).$$
	There exists an open neighborhood $O$ of $x_0$ in $X$ such that $p(O) \subset (y_1,y_2)$. 
	Since open intervals form a topological basis of the topology of a circular order, 
	we can suppose that $O$ is an open interval $(x_1,x_2)$ containing $x_0$. 
	By our choice, $x_0$ is not singular. Therefore, $(x_1,x_0)$ and $(x_0,x_2)$ are nonempty. Since $A$ is dense in $X$, there exist $a_1,a_2 \in A$ such that $a_1 \in (x_1,x_0)$ and $a_2 \in (x_0,x_2)$. Then 
	$a_1< x_0 < a_2$. So, $q(a_1) \leq q(x_0) \leq q(a_2)$. 
	Since $q(a_1)=p(a_1), q(a_2)=p(a_2)$ and $p(a_1), p(a_2) \in (y_1,y_2)$ it follows that 
	$y_1 < q(x_0) < y_2$. 
	We get 
	$$
	q(x_0) \in (y_1,y_2). 
	$$
	This contradiction completes the proof. 
\end{proof}

\sk 
\begin{thm} \label{t:LOT1}
	Every linearly ordered compact metric dynamical system is  tame$_\mathbf{1}$.
\end{thm}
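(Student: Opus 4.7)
The plan is to reduce the statement directly to the generalized Helly theorem (Theorem \ref{t:lin-Helly}) by showing that the enveloping semigroup embeds into $M_+(X,X)$.

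First I would observe that since each $t \in T$ acts on $X$ as a linear order preserving homeomorphism, every element $t \in T$ belongs to the set $M_+(X,X)$ of (weakly) order preserving maps from $X$ to itself, viewed as a subset of $X^X$ with the pointwise topology. Next I would verify that $M_+(X,X)$ is closed in $X^X$: if $g_i \in M_+(X,X)$ and $g_i \to p$ pointwise, then for any $a \le b$ in $X$ we have $g_i(a) \le g_i(b)$ for all $i$, so, since the order relation is closed in $X \times X$ (the interval topology is compact and Hausdorff, so the graph of $\le$ is closed), $p(a) \le p(b)$. Hence $p \in M_+(X,X)$. Consequently
\[
E(X,T) \;=\; \overline{j(T)}^{\,X^X} \;\subseteq\; M_+(X,X).
\]

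Then I would apply Theorem \ref{t:lin-Helly} to the compact metric linearly ordered space $X$, which tells us that $M_+(X,X)$ is first countable in the pointwise topology. Since first countability is inherited by subspaces, the closed subspace $E(X,T) \subseteq M_+(X,X)$ is first countable as well. By Definition \ref{d:Tame_k}(1) this means exactly that $(X,T)$ is tame$_\mathbf{1}$.

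There is essentially no obstacle once the generalized Helly theorem is in hand; the only subtle point worth checking is the closedness of $M_+(X,X)$ under pointwise limits, which follows immediately from the closedness of the order relation in the compact metric LOTS $X$. No argument via $G_\delta$ points or Proposition \ref{countable} is needed at this stage, because Theorem \ref{t:lin-Helly} has already absorbed that work into its proof.
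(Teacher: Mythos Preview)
Your proof is correct and follows exactly the same approach as the paper: show that $M_+(X,X)$ is pointwise closed in $X^X$, deduce $E(X,T)\subseteq M_+(X,X)$, and then invoke Theorem~\ref{t:lin-Helly} to conclude first countability. You have simply supplied more detail on the closedness of $M_+(X,X)$ than the paper does.
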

\begin{proof}Let $X$ be a compact metrizable linearly ordered dynamical system. 
	Every element $p \in E(X)$ is a LOP selfmap $X \to X$, because 
	$M_+(X,X)$ is pointwise closed. 
	So, $E$ is a subspace of $M_+(X,X)$ which is first countable by Theorem \ref{t:lin-Helly}. 
\end{proof}

\begin{ex} \label{ex:Helly} 
	Consider the linearly ordered system $([0,1], H_+([0,1]))$. 
	The enveloping semigroup of this c-order preserving system is a (compact) subspace of the Helly space which is first countable. So, this system is tame$_\mathbf{1}$. It is not tame$_{\bf 2}$. 
	In fact, it is (like the Helly space) not hereditarily separable. 
	There exists a discrete uncountable subspace in the enveloping semigroup. 
	For each $z \in [0,1]$ consider the functions 
	$$
	f_z \colon [0,1] \to [0,1], \ \ \ f_z =
	\begin{cases}
		0 & {\text{for}} \ x < z\\
		\frac{1}{2} & {\text{for}} \ x =z\\
		0 & {\text{for}} \ x > z.
	\end{cases}
	$$	
	Then $\{f_z \colon z \in (0,1)\}$ is an uncountable discrete subset of $E(X)$. 
\end{ex}

 Recall again that every separable linearly 
    
\br
\textbf{Circularly ordered systems, in general, are not tame$_\mathbf{1}$}
\sk

\begin{prop} \label{ex:T} 
	Let $H_+(\T)$ be the Polish topological group of all c-order preserving homeomorphisms of the circle $\T$. The minimal circularly ordered dynamical system $(\T,H_+(\T))$ is tame but not tame$_\mathbf{1}$. 
\end{prop}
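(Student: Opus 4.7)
The plan is to derive tameness from the general fact recalled in the Preliminaries that every circularly ordered dynamical system is tame \cite{GM-c}, and to extract the failure of first countability of $E(\T,H_+(\T))$ from Proposition \ref{two}(2). Tameness is then immediate, since $(\T,H_+(\T))$ is circularly ordered by definition.

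To apply Proposition \ref{two}(2) I would first note that the system is minimal (in fact $H_+(\T)$ acts transitively on $\T$), so the unique minimal subset $Z$ appearing in that proposition is the whole circle. The core step is then to construct, for a fixed basepoint $a\in\T$, an uncountable family of loxodromic idempotents $p_{(a,b)}\in E(\T,H_+(\T))$ with common attracting point $a$ and pairwise distinct repulsing points $b$ ranging over $\T\setminus\{a\}$. I would produce $p_{(a,b)}$ as a pointwise limit of orientation-preserving homeomorphisms $h_n^{(b)}\in H_+(\T)$ that fix $b$ and compress $\T\setminus\{b\}$ into a neighborhood of $a$ of radius $1/n$; such $h_n^{(b)}$ are easy to write down after identifying $\T\setminus\{b\}$ with an open interval and using explicit piecewise-linear contractions. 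The limit satisfies $p_{(a,b)}(x)=a$ for every $x\neq b$ and $p_{(a,b)}(b)=b$, which is the loxodromic idempotent condition. The same construction incidentally shows that the system is proximal: applied to any two distinct points $x,y\in\T$ and any $b\neq x,y$, it produces a limit collapsing both $x$ and $y$ to $a$, so the hypotheses of Proposition \ref{two} are in force.

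Since $\T\setminus\{a\}$ is uncountable, Proposition \ref{two}(2) then yields that the parabolic idempotent $p_a\equiv a$ admits no countable local base in $E(\T,H_+(\T))$, so $E(\T,H_+(\T))$ is not first countable and $(\T,H_+(\T))$ is not tame$_\mathbf{1}$. The only genuine (and mild) obstacle is verifying that the squeezing sequences $h_n^{(b)}$ can really be chosen in $H_+(\T)$ and that their pointwise limit behaves correctly from both sides at $b$; both are routine once the interval parametrization is fixed, and nothing beyond elementary circle homeomorphism constructions is needed.
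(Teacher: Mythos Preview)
Your proposal is correct and follows essentially the same route as the paper: tameness from \cite{GM-c}, and failure of first countability via Proposition \ref{two}(2) by exhibiting, for a fixed $a$, loxodromic idempotents $p_{(a,b)}$ for every $b\neq a$. You supply details the paper leaves implicit --- the explicit construction of the $p_{(a,b)}$ as limits of squeezing homeomorphisms and the verification of proximality needed to invoke Proposition \ref{two} --- but the architecture is identical.
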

\begin{proof}
	 This system is tame being a circularly ordered system, \cite{GM-c}. Let us show that the enveloping semigroup of this c-order preserving system is not first countable. 
	Choose any point $a \in \T$. For every $b \neq a$ in $\T$ 
	the pair $(a, b)$ is the target pair of a loxodromic idempotent $p=p_{(a, b)}$
	with attracting point $a$ and a repulsing point $b$. 
	
	\begin{figure}[h]
		\begin{center} \label{F2}
			\scalebox{0.7}{\includegraphics{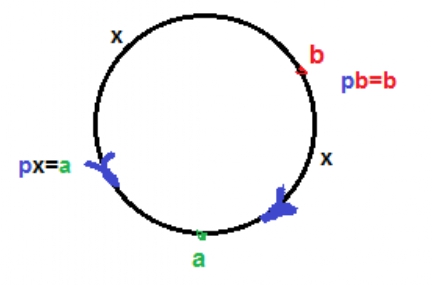}}
			\caption{loxodromic idempotent $p$}
		\end{center}
	\end{figure}

	Then, according to Proposition \ref{two}.2, the parabolic idempotent $p_a$ defined by $p_ax = a, \ \forall x \in \T$, does not admit a countable basis for its topology. 
	This example, in particular, shows that a \textit{null system} need not be tame$_\mathbf{1}$. 
\end{proof}

\sk 
\begin{remark} \label{r:c-Helly} 
	Theorems \ref{t:lin-Helly} and \ref{t:LOT1} cannot be extended, in general, to circular orders. Indeed, the ``circular analog of Helly's space" $M_+(\T,\T)$ (which is a separable Rosenthal compactum) is not first countable. Also its subspace, the enveloping semigroup $E(\T)$ of the circularly ordered system 
	$(\T, H_+(\T))$
	from Example \ref{ex:T}, is not first countable. 
\end{remark}

\begin{remark} \label{r:int}
	Let $\Q_0=\Q/\Z \subset \T$ be the circled rationals with the discrete topology and $G=\Aut(\Q_0)$  the automorphism group (in its pointwise topology) of the circularly ordered set $\Q_0$. In \cite{GM-int} we 
	have shown that 
	the universal minimal $G$-system of the Polish topological group $G$ can be identified with the circularly ordered compact metric space $\mathrm{Split}(\T; \Q_\circ)$, which is constructed by splitting the points of $\Q_0$ in $\T$. 
In a similar 
way to the proof of Proposition \ref{ex:T} one shows that
the $G$-system $M(G)$ is tame but not tame$_\mathbf{1}$. 
\end{remark}

\br

In view of Examples \ref{e:EllisProj}, \ref{e:Akin},  Lemmas \ref{Xn} and \ref{Y*}, and 
the examples in the present section, we pose the following.

\begin{question}\label{q:21}
Let $(X,T)$ be a metric, point transitive, tame system which is not tame$_{\bf 1}$.
Is there always an extension $\pi \colon (X^*,Y) \to (X,T)$ such that
$(X^*,T)$ is metric, point transitive and tame$_{\bf 1}$ ?
Is there such an extension $\pi$ which is also at most two-to-one ?
\end{question}

It is though worth mentioning that all of these examples are connected to ordered structures,
which may be the secrete behind the existence of the tame$_{\bf 1}$ extension.

\br 
\section{Minimal left ideals}\label{mi}
\sk

According to Ellis' theory every enveloping semigroups $E(X,T)$ of a dynamical system $(X,T)$
contains at least one minimal left ideal. These minimal left ideals are the same as the minimal subsets
of the dynamical system $(E(X,T), T)$.
A minimal left ideal $M$ has the following form:
The set $J$ of idempotents in $M$ is nonempty.
We choose an arbitrary idempotent $u \in J$ and set $G = uM$.
Then $G$ is a group, $M = J \cdot G = \bigcup_{v \in J} vG$, and the representation of an element $p \in M$
as $p = v\al$, with $v \in J$ and $\al \in G$ is unique.
For each $\al \in G$, the map $R_\al \colon p \mapsto p\al$ is a homeomorphism of $M$ onto itself
which commutes with the (left) action of $T$ on $M$, so that $R_\al$ is an automorphism
of the dynamical system $(M,T)$.
Moreover, the map $\al \mapsto R_\al$ is a group isomorphism between $G$ and $\Aut(M,T)$.

A minimal system $(X,T)$ admits a single left minimal ideal if and only if the proximal relation $P \subset X \times X$ is
an equivalence relation, but in general even a minimal metrizable system can admit $2^{\cb}$ 
minimal left ideals (see \cite{G-G}).
It may happen that the proximal relation $P$ on a minimal system $(X,T)$ 
is an equivalence relation but is not closed (see \cite{S}).
The relation $P$ is closed if and only if 
$E(X,T)$ has a unique minimal ideal $M$ and $J$ is a closed subset of $M$.

Recall that a metric minimal tame dynamical system $(X,T)$ which admits an invariant measure
is almost automorphic \cite{G-18}, and in particular its proximal relation is a closed equivalence relation.
Thus the enveloping semigroup of such a system always admits a unique minimal left ideal $M$
and its subset of idempotents $J$ is a closed subset of $M$.
In fact, in many such systems it may happen that this unique minimal left ideal coincides with the 
adherence semigroup $A(X,T)$.

Let $(X,T)$ be a metric minimal tame system admitting a $T$-invariant probability measure. 
Then the system $(X,T)$ is almost automorphic and the extension $\pi \colon X \to~Y$,
where $Y$ is the largest equicontinuous factor of $X$, is an almost one-to-one map.
Let, as before 
$$
X_0 =\{x \in X : \pi^{-1}(\pi(x)) = \{x\}\}, \quad
Y_0 = \pi (X_0) = \{y \in Y : |\pi^{-1}(y)| = 1\}.
$$

\sk 
\begin{prop}\label{mli}
Let $(X,T)$ be a metric minimal tame system admitting a $T$-invariant probability measure. 
Then the enveloping semigroup $E(X,T)$ contains
a unique minimal left ideal $M \subset E(X,T)$, and the space 
$M$ is a separable Rosenthal compactum.
If $Y \setminus Y_0$ is countable,
i.e. when  $(X,T)$ is AA$_c$, then $M$ is first countable.
\end{prop}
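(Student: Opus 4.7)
I would handle the three assertions in order.

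\emph{Uniqueness of $M$.} By \cite[Corollary 5.4.(2)]{G-18}, any minimal metric tame system carrying a $T$-invariant probability measure is almost automorphic, with $\pi\colon X \to Y$ an almost one-to-one extension of its maximal equicontinuous factor. For an AA minimal system the proximal relation coincides with the fiber relation $R_\pi$: points in the same fiber are pushed, along a suitable net in $T$, onto the one-point fibers over $Y_0$, while distinct $\pi$-images remain distal in equicontinuous $Y$. Since $R_\pi$ is a closed equivalence relation, Ellis' theory yields a unique minimal left ideal $M$ in $E(X,T)$, with $J$ closed in $M$.

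\emph{$M$ is a separable Rosenthal compactum.} Tameness and the dynamical BFT dichotomy (Theorem \ref{f:DynBFT}) give that $E := E(X,T)$ is a separable Rosenthal compactum. The closed subspace $M \subset E$ is again Rosenthal, which is immediate from the Baire-class-$1$ characterization. For separability, fix any $p_0 \in M$; the right translation $R_{p_0}\colon q \mapsto qp_0$ is continuous on $E$ (since $E$ is a right topological semigroup), and its image $Ep_0$ is a nonempty left ideal contained in $M$, hence equals $M$ by minimality. Thus $M$ is a continuous image of the separable space $E$, and so separable.

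\emph{First countability under AA$_c$.} The plan is to invoke Proposition \ref{countable} by producing, for each $p \in M$, a countable set $C(p) \subset X$ that determines $p$ within $M$. Fix once and for all a countable $A \subset X_0$ with $\pi(A)$ dense in $Y$; any $q \in M$ agreeing with $p$ on $A$ must satisfy $\pi_*(q)=\pi_*(p)$, because $E(Y,T)$ consists of homeomorphisms of the metric equicontinuous space $Y$. Once $\pi_*(q)=\pi_*(p)$ is fixed, $p(x)=q(x)$ holds automatically at every $x$ with $\pi_*(p)\pi(x)\in Y_0$, both values being the unique $\pi$-preimage. The only remaining points at which $p$ and $q$ can differ lie in $\pi^{-1}(\pi_*(p)^{-1}(Y \setminus Y_0))$; here the AA$_c$ hypothesis enters, because $Y \setminus Y_0$ is countable and $\pi_*(p)$ is a homeomorphism of $Y$, so this is a union of only countably many non-trivial fibers. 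To each such fiber one adjoins a chosen countable subset to $C(p)$, keeping $C(p)$ countable.

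The main technical obstacle is verifying that this $C(p)$ really determines $p$ inside $M$: namely, that for $p,q \in M$ with $\pi_*(p)=\pi_*(q)$ the restrictions $p \rest \pi^{-1}(y)$ and $q \rest \pi^{-1}(y)$ to a non-trivial fiber are pinned down by their values on a chosen countable subset of that fiber. I would derive this rigidity from the Ellis decomposition $p = v\alpha$ with $v \in J$ and $\alpha \in G = uM$ (for a fixed $u \in J$): minimal idempotents $v$ satisfy $\pi_*(v)=\id_Y$ and act on each fiber as a retraction, while the group $G$ controls how distinct elements of $M$ sitting above the same element of $E(Y,T)$ can differ. Combined with the structural consequences of tameness for AA minimal systems with invariant measure (the isomorphism-extension property of \cite{G-18} and regularity from \cite[Corollary 3.7]{FGJO}), this should force $p \rest \pi^{-1}(y)$ to be determined by countably many values, as is concretely visible in the Floyd case (Theorem \ref{t:Floyd}) and the two arrows example (Proposition \ref{p:AlmostAut}).
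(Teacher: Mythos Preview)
Your treatment of the first two assertions is fine and matches the paper: almost automorphy from \cite{G-18} gives a closed proximal relation, hence a unique minimal left ideal $M$ with $J$ closed; and $M$ is Rosenthal as a closed subspace, separable as a minimal $T$-system (or, as you note, as a continuous image $Ep_0$ of $E$).

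The gap is in the first-countability argument. You correctly identify the ``main technical obstacle'': once $\pi_*(p)=\pi_*(q)$, you must show that $p$ and $q$ agree on each nontrivial fiber $\pi^{-1}(y)$ as soon as they agree on some countable subset of it. But such fibers can be uncountable (as in the Floyd system or Example \ref{cos}), and nothing you invoke---the Ellis decomposition $p=v\alpha$, isomorphic-extension or regularity properties---forces a general $p\in M$ to be determined on a fiber by countably many values. Your proposal remains a sketch at exactly the point that carries the content.

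The paper sidesteps this difficulty by a reduction you do not make. First, since $E(Y,T)\cong K$ is a metrizable compact group and $\tilde\pi\colon M\to K$ satisfies $\tilde\pi^{-1}(e)=J$, the set $J$ is a closed $G_\delta$ in $M$. Second, because $\{R_\alpha:\alpha\in G\}\cong\Aut(M,T)$ acts by homeomorphisms and every $p\in M$ has the form $v\alpha$ with $v\in J$, it suffices to show each $v\in J$ is a $G_\delta$ point of $J$. Third---and this is the step that dissolves your obstacle---$J$ is actually \emph{metrizable}: choosing a single point $x_y\in\pi^{-1}(y)$ for each $y\in Y\setminus Y_0$, the countable family of continuous maps $\eva_y\colon v\mapsto vx_y$ separates points on $J$. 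The reason a single point per fiber suffices is purely algebraic: for $v,w\in J$ one has $wv=w$, so if $(x,x_y)$ is proximal (i.e.\ $vx=vx_y$ for some $v\in J$) then $wx=wvx=wvx_y=wx_y$ for \emph{every} $w\in J$. Thus idempotents are constant on proximal cells, and agreement at one representative per bad fiber forces agreement everywhere.
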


\begin{proof}
It is shown in \cite{G} that the minimal system $(X,T)$ is almost automorphic; i.e it is an almost one-to-one 
extension of its largest equicontinuous factor $(Y,T)$.
It then follows that the proximal relation on $X$ is a closed equivalence relation and thus
$E(X,T)$ admits a unique minimal left ideal $M$ and its set of idempotents $J$ is closed.
Being a closed subset of a Rosenthal compactum $M$ is also a Rosenthal compactum.
Each $T$-orbit in $M$ is dense and thus $M$ is separable.

The dynamical system $(Y,T)$ is minimal and equicontinuous and therefore has the form
$Y = K/L$ where $K$ is a compact second countable topological group and $L \le K$
is a closed subgroup. The enveloping semigroup of the system
$(Y,T) = (K/L,T)$ is the system $(K,T)$ and it follows
that there is a homomorphism 
$$
\tilde\pi \colon (M,T) \to (K,T),
$$
which is a proximal extension.
it also follows that the restriction of $\tilde\pi$ to $G$ is a group isomorphism
$$
\tilde\pi \rest G \colon G \to K.
$$
We also have $\tilde\pi^{-1}(e) = J$, where $e$ is the identity element of $K$
and it follows that $J$ is a closed $G_\del$ subset of $M$.

We observe that it suffices to show that each $v \in J$ is a $G_\del$ point of $J$.
In fact, as $\{R_\al : \al \in G\} \cong \Aut(M,T)$ it will follow that every
$p = v\al \in vG$ is also a $G_\del$ point of $M$.

Now it follows from our assumption the $Y \setminus Y_0$ is countable that 
in fact $J$ is metrizable. For each $y \in Y \setminus Y_0$ we choose a point 
$x_y \in \pi^{-1}(y)$ and define the evaluation map 
$$
{\rm{eva}}_y \colon v \mapsto vx_y, \quad J \to \pi^{-1}(y).
$$
Now the countable collection of continuous maps $\{{\rm{eva}}_y\}_{y \in Y \setminus Y_0}$
separates points on $J$ and our claim follows.
This completes the proof of the proposition. 
\end{proof}

\sk 
\begin{remark}\label{metricJ}
If in the above proposition we replace the condition that $Y \setminus Y_0$ be countable
by the assumption that the set $J$ of minimal idempotents is metrizable, we obtain the same result.
\end{remark}

\begin{exa}   \ 
\begin{enumerate}
\item
In the case of a hyperbolic group acting on its Gromov boundary the unique minimal ideal
is isomorphic to the action on the boundary, hence actually metrizable (see Corollary \ref{grom}).
\item
The Sturmian system is an example where $\Adh(X,T)$ is minimal
and hence it coincides with the unique minimal left ideal. Topologically it is the split circle
(see \cite[Example 14.10]{G-M}). In this example $|J|=2$.
\item
For any system which satisfies the assumptions of Theorem \ref{asym} 
we have that $\Adh(X,T)$ is minimal and hence coincides with the unique minimal left ideal.
\item
For the Floyd system $(X,T)$ we have
$$
E(X,T) \supsetneq \Adh(X,T) \supsetneq M,
$$
where $M$ is the unique minimal left ideal in $E(X,T)$ (see \cite{H-J}).
\item
In the example \cite[Example 8.43]{GM-tLN} we see that  
$\Adh(X,T) =E(X,T) \setminus 	\Z = M$ is the unique minimal left ideal of $E(X,T)$.
Algebraically it is the product set $\T^d \times \mathcal F$, where $\mathcal F$ is the collection of
ordered orthonormal bases for $\R^d$. It is easy to see that $J = \Fcal$ and that
the topology induced on $J$ is actually the natural compact metric topology on $\Fcal$.
Thus, by Remark \ref{metricJ} the system $(X,T)$ is tame$_\mathbf{1}$.
Note that this AA system is not AA$_c$.
\end{enumerate}
\end{exa}



\br  
\section{Are the groups $vG, \ v \in J$ always homeomorphic to each other ?}
\sk

For the Sturmian system $(X,T)$ (or any other system of the form $\Split(\T, R_\al; A)$), the 
Adherence semigroup $\Adh(X,T)$ is the Ellis' two circles system and it coincides with
the unique minimal left ideal $M$ of  $E(X,T)$.
The set $J$ of minimal idempotents in $M$ consists of two elements 
$$
J = \{u_+, u_{-}\}.
$$ 
If we choose $u_+ =u$ then the group $G = uM$ is algebraically isomorphic to the circle group $\T$,
and as a topological space it is homeomorphic to the Sorgenfrey circle.
The map $g \mapsto - u_{-}g$ is a topological isomorphism from $G$ onto $u_{-}G$.

\br

We next consider a similar but more complicated example where the same phenomenon holds
(compare with Subsection \ref{semi}).

\begin{exa}\label{cos}
Let $\al \in (0,1)$ be an irrational number.
We start our construction with the dynamical system $(\T, R_\al)$ defined on the circle $\T = \R/\Z$
by $R_\al \theta = \theta + \al \ \pmod  1$.
Let

$$
F(\theta) = 
\begin{cases}
	\cos(\frac{2\pi}{\theta}) & 0 < \theta < 1/2 \\
	2\theta & 1/2 \leq \theta < 1\\
	0 & \theta =0
\end{cases}
$$
\begin{figure}[h]
	\begin{center}
		\includegraphics[width=7cm,frame]{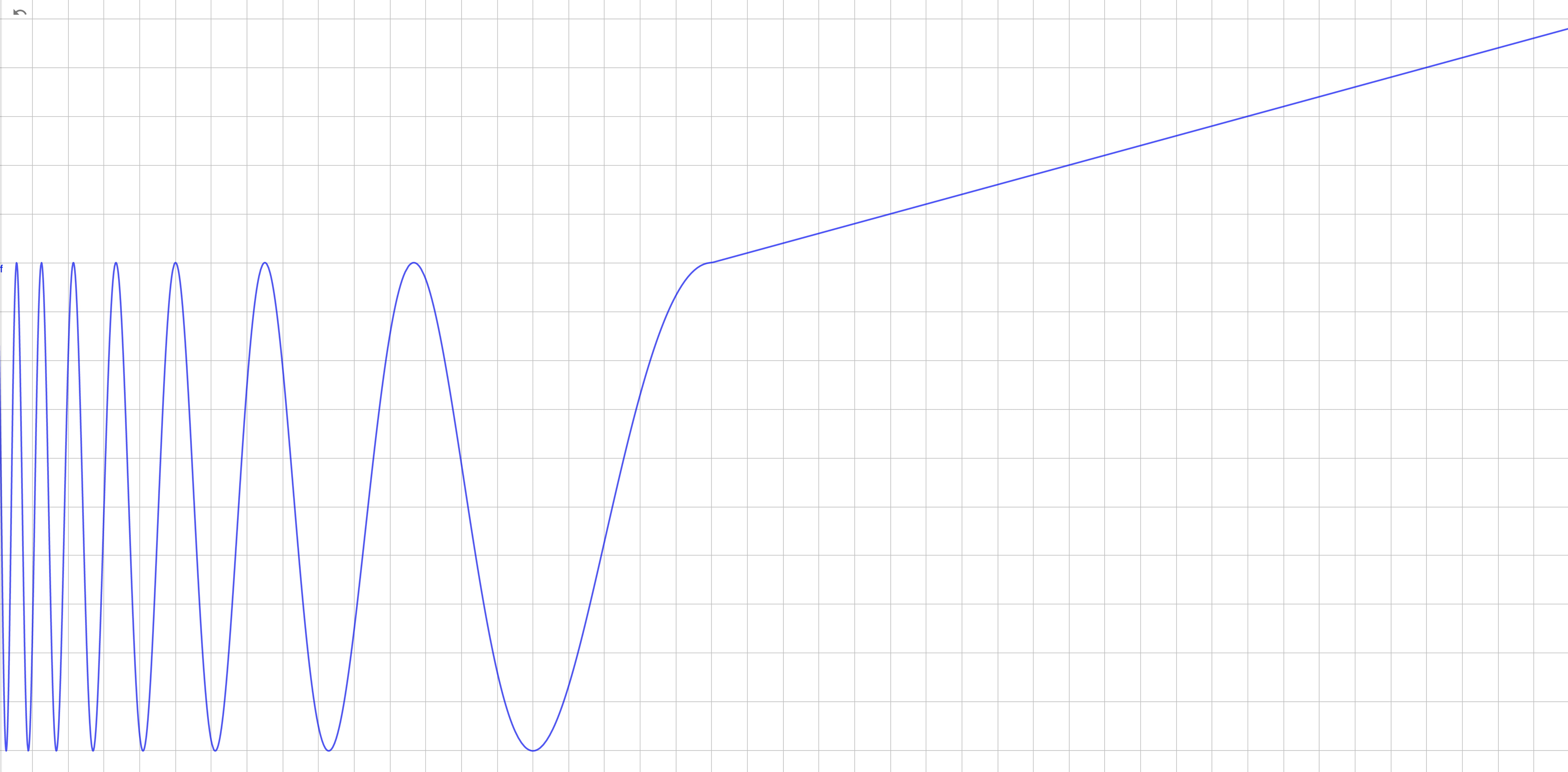}
	\end{center}
	\caption{The graph of the function $F(\theta)$}
\end{figure}
a function on $\T$ with $0$ as the unique discontinuity point.
Set $f(n) = F(n\al)$, a function in $\Om =[-1, 2]^\Z$.
Let $x_0 \in \T \times [-1,2]^\Z$ be defined by
$$
x_0(n) = (0, F(n\al)) = (0, f(n)), \quad n \in \Z.
$$
Let $T \colon \T \times \Om  \to \T \times \Om$ be defined by
$$
T(\theta, \om) = (\theta +\al, S\om), 
$$
where for $\om \in \Om$ the shift homeomorphism $S \colon \Om \to \Om$ 
is defined by $S\om(n) = \om(n+1)$.
Let 
$$
X = \cls \Ocal_T(x_0) = \overline{\{T^n x_0 : n \in \Z\}} \subset  \T \times \Om.
$$
We let $\pi \colon X \to \T$ denote the projection on the $\T$ coordinate.
Clearly $\pi \colon (X,T) \to (\T, R_\al)$ is a homomorphism of dynamical systems.
One can also easily see that $\pi$ is an almost one-to-one extension, with
$\pi^{-1}(\pi(x)) = \{x\}$ for every $x \in X$ such that $\pi(x) \not \in \Ocal_{R_\al}(0)$.
In fact for a sequence $n_i$ such that $\lim n_i\al = \theta \not \in \Ocal_{R_\al}(0)$, we have:
$$
\lim_{i \to \infty} T^{n_i}x_0 = x, \quad \text{with} \quad 
x(n) = (\theta, F(\theta + n\al)),\quad (n \in \Z).
$$
It then follows that the system $(X,T)$ is minimal and almost automorphic.
Let $n_i \in \Z$ be a sequence such that $n_i \al \to 0$ with $n_i \al \in [1/2,1)$.
Assume that $x_2 := \lim T^{n_i}x_0$ exists. It is then easy to see that $x_2$ has the form
$$
x_2(n)  = 
\begin{cases}
	(0, f(n))  & n \not = 0\\
	(0, 2) & n=0.
\end{cases}
$$
On the other hand if $n_i \in \Z$ is a sequence such that
$n_i \al \to 0$ with $n_i \al \in (0, 1/2)$  and such that $x_t := \lim T^{n_i}x_0$ exists, then 
$x_t$ has the form 
$$
x_t(n)  = 
\begin{cases}
	(0, f(n))  & n \not = 0\\
	(0, t) & n=0,
\end{cases}
$$
for a unique $t \in [-1,1]$.

This gives us a full description of the fiber over $0$:
$$
\pi^{-1}(0) = \{x_2\} \cup \{x_t : t \in [-1,1]\}.
$$
Of course this defines also $\pi^{-1}(n\al)$ for every $n \in \Z$ and, as
$\pi^{-1}(\theta)$ is a singleton for every $\theta$ which is not in the $R_\al$-orbit of $0$,
we now have a complete description of all the points of $X$.

As a base for the topology at $x_2$ we can take the collection $\{V_k(x_2): \  k =2,3,\dots\}$, where
$$
V_k(x_2) =\pi^{-1}(1- 1/k, 1) \cup \{x_2\}.
$$
The collection $\{V_{k,l}(x_t): \  k, l  =2,3,\dots\}$, where
$$
V_{k,l}(x_t)   = \left(\pi^{-1}(0, 1/k) \cap \{x : x(0) = (\pi(x),s)\  \& \  |s -t| < 1/l\}\right) \cup \{x_s : |t -s| < 1/l\},
$$
will serve as a basis for the topology at $x_t \ (0 \le t \le 1)$.

Of course we have $T^nV_k(x_2) = V_k(T^nx_2)$ and 
$T^nV_k(x_t) = V_k(T^nx_t)$, for $n \in \Z$. Finally for a point $x \not \in \pi^{-1}(\Ocal_{R_\al}(0))$
the collection $\{V_k(x), \  k =2,3,\dots\}$, where
$$
V_k(x) =\pi^{-1}(\pi(x)- 1/k, \pi(x) +1/k),
$$ 
is a basis for the topology at $x$.

\br

Next note that, as the elements of $\pi^{-1}(0)$ differ from each other only at one coordinate,
the extension $\pi$ is asymptotic. It then follows that our system $(X,T)$ is tame and that
$E(X,T)$ is first countable (see Theorem \ref{asym}).
Thus we proved the following:

\begin{claim}
The system $(X,T)$ is AA$_{c}$ (but not AA$_{cc}$) and it is tame$_{\bf 1}$.
\end{claim}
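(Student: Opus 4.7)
The plan is to verify the three parts of the claim separately, relying on the structural description of $\pi^{-1}(0)$ given in the construction together with Theorem \ref{asym}.

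First I would check that the system is AA$_c$. By the explicit form of $\pi^{-1}(\theta)$ given above, the set of points with non-singleton fiber is exactly the orbit $Y \setminus Y_0 = \Ocal_{R_\al}(0) = \{n\al \pmod 1 : n \in \Z\}$, which is countable. Together with the fact (already established in the construction) that $(X,T)$ is a minimal almost automorphic extension of the equicontinuous system $(\T, R_\al)$, this gives AA$_c$. On the other hand, the fiber $\pi^{-1}(0) = \{x_2\} \cup \{x_t : t \in [-1,1]\}$ has cardinality $\mathfrak{c}$, so
\[
X \setminus X_0 = \pi^{-1}(Y \setminus Y_0) \supset \pi^{-1}(0)
\]
is uncountable; this rules out AA$_{cc}$.

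Next I would verify the asymptotic property, which is the key step. Every pair of points $x, x' \in \pi^{-1}(0)$ agrees in every coordinate $n \in \Z$ except possibly at $n = 0$ (the $\T$-coordinate is $0$ for both, and the sequence parts differ only in the entry at $n=0$). Since $T$ acts by the shift $S$ on the sequence part, the translated points $T^k x$ and $T^k x'$ agree in every coordinate except $n = -k$. Now pick any $p \in \Adh(X,T)$. Since $(X,T)$ is tame (its enveloping semigroup is Fr\'echet--Urysohn), there is a sequence $k_i \in \Z$ with $T^{k_i} \to p$ in $E(X,T)$. As $p \notin \{T^k : k \in \Z\}$ (the orbit of the identity is discrete in the enveloping semigroup of any non-equicontinuous almost automorphic system here, or one passes to a subsequence), we may assume $|k_i| \to \infty$. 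For any fixed coordinate $n$, we have $n \ne -k_i$ for all sufficiently large $i$, so $(T^{k_i}x)(n) = (T^{k_i}x')(n)$ eventually; passing to the pointwise limit gives $px = px'$. This is the definition of an asymptotic extension, and it extends (by $T$-equivariance) to all fibers $\pi^{-1}(n\al)$.

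Finally, I would invoke Theorem \ref{asym}: $(X,T)$ is a minimal metric system with $T = \Z$ abelian, is AA$_c$, and the extension $\pi \colon X \to \T$ onto its largest equicontinuous factor is asymptotic. The theorem concludes that $(X,T)$ is tame$_{\bf 1}$. The main delicate point in the argument is the step requiring $|k_i| \to \infty$ when representing $p \in \Adh(X,T)$: one needs to know that a bounded subsequence would force $p$ into $T$ itself, which follows because $\Adh(X,T) = E(X,T) \setminus \{T^n\}$ once one checks that the homeomorphism orbit is discrete in $E(X,T)$ (equivalently, the system is not weakly rigid, which is clear here since it is a non-equicontinuous almost automorphic cascade).
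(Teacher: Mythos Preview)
Your proof is correct and follows exactly the paper's route: note that any two points in $\pi^{-1}(0)$ differ only at the single coordinate $n=0$, deduce that $\pi$ is an asymptotic extension, and then invoke Theorem \ref{asym} (together with the obvious count showing AA$_c$ but not AA$_{cc}$). You are in fact more careful than the paper about the step $|k_i|\to\infty$ / non-weak-rigidity, which the paper leaves implicit; your justification via the one-coordinate discrepancy already forces $\lim T^{k_i}x=\lim T^{k_i}x'$ for distinct $x,x'$ in a fiber, so $\Id\notin\Adh(X,T)$ and the concern is resolved.
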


\br

We will next determine the nature of the elements of the
adherence semigroup $\Adh(X,T) = E(X,T) \setminus \{T^n : n \in \Z\}$.

We start by determining the idempotents in $\Adh(X,T)$.
If $v = v^2$ is such an idempotent, then clearly $\pi_*(v) = \Id_\T = 0$,
where $\pi_* \colon E(X,T) \to E(\T, R_\al) \cong \T$ is the canonical homomorphism induced by $\pi$,
and where $E(\T, R_\al)$ is identified with $\T$. Now if $T^{n_i} \to v$, then it follows that
$n_i\al \to 0$, and thus $T^{n_i}x_0 \to vx_0 = x_\ep$, with either $\ep=2$ or $\ep = t \in [-1,1]$.
We now note that this determines $v$, since then $vx =x$ when $x \not \in \Ocal_T(x_0)$
and $vT^jx = T^jvx = T^j x_\ep$ for every $x \in \pi^{-1}(0)$ and $j \in \Z$. We then write $v = v_\ep$.
Conversely if $n_i$ is a sequence in $\Z$ such that $n_i\al \to 0$ and 
$T^{n_i}x_0 \to x_\ep$, then we must have $T^{n_i} \to v_\ep$ in $E(X,T)$.
Note that clearly $v_\ep$ is a minimal idempotent.

Let now $p = \lim T^{n_j}$ be an element of $\Adh(X,T)$, with $\lim n_j\al = \ga \in \T$.
For $x \in \pi^{-1}(-\ga)$ we have $px = x_\ep$ for some $\ep$ and it follows that
$p(\pi^{-1}(-\ga)) = \{x_\ep\}$. 
We now check and see that $v_\ep p =p$.
It follows immediately that $\Adh(X,T) = M$, the unique minimal left ideal of $E(X,T)$.
Denote by $J$ the set of minimal idempotents in $\Adh(X,T)$; thus 
$$
J = \{v_2\} \cup \{v_t : t \in [-1,1]\}.
$$ 
Note that $v_\ep v_\eta = v_\ep$ for every $\ep$ and $\eta$.
For convenience let $u = v_2$ and we set $G = uM$.
With this notation we have that for $p$ as above,  $p = v_\ep p$ and  denoting $g_\ga = up \in G$,
we get $p = v_\ep g_\ga$. Thus
$$
G = uM = \{g_\ga : \ga \in \T\}  \cong \T,
$$
and every $p \in M$ has uniquely the form $p = v_\ep g_\ga$ with $g_\ga \in G$ and $v_\ep \in J$.

Given $p \in \Adh(X,T)$ with $\pi_*(p) = \ga$, in order to determine the unique $\ep$ for which
$v_\ep p =p \in v_\ep G$, we have to evaluate $px(0) = (0, \ep)$ for some (any) $x \in \pi^{-1}(-\ga)$.

The collection $G$ is actually a group which algebraically is isomorphic to $\T$,
with $g_\ga \leftrightarrow \ga$, and the map $r_\ga \colon p \mapsto pg_\ga$
defines a (continuous) automorphism of the minimal system $(M, T) = (\Adh(X,T), T)$.
Thus $G \cong \Aut(\Adh(X,T),T) = \{r_\ga : \ga \in \T\}$.

\br

As the topology of $A(X,T)$ is the topology of pointwise convergence, it is now easy to see
that the topology induced on $J$ is such that $u = u_2$ is an isolated point of $J$ and that the
collection $\{v_t : t \in [-1,1]\}$ is naturally homeomorphic to the interval $[-1, 1]$.

\br

For $x \in X$ let $\eva_{x} \colon  \Adh(X,T) \to X$ denote the evaluation map defined by
$$
\eva_x(p) = px, \quad (p \in \Adh(X,T)).
$$
Clearly the collection of sets 
$$
\{\eva_x^{-1}(V) : x \in X, \ {\text{$V$ open in $X$}}\}, 
$$  
forms a sub-base for the topology of $\Adh(X,T)$.

We will next show that $G$, with the topology it inherits from $\Adh(X,T)$, is homeomorphic
to the Sorgenfrey circle (i.e. the half open intervals topology on $\T$). 

\br

We now have that when $x$ ranges over $X$, and we write $\pi(x) = -\theta$,
sets of the form:
\begin{gather*}
	G \cap \eva_{x}^{-1}(V_k(x_2)) = 
	\{g_\ga \colon \ga - \theta \in (1 -1/k, 1)\}   \cup \{g_\theta\} \cong (\theta - 1/k,\theta], \\
	G \cap \eva_{x}^{-1}(V_{k,l}(x_t)) = 
	\{g_\ga \colon \ga - \theta \in (0, 1/k)\}  \cong (\theta,\theta + 1/k), \\
	{\text{and for $x' \not\in \pi^{-1}(\Ocal_{R_\al})$ with $\pi(x') = \theta'$}}\\
	G \cap \eva_{x}^{-1}(V_k(x')) = 
	\{g_\ga \colon \ga - \theta \in (\theta' -1/k, \theta' + 1/k)  
	\cong (\theta' + \theta - 1/k,\theta' + \theta + 1/k), \\
	\quad k , l =2,3,\dots, \ t \in [-1,1],
\end{gather*}
together with all their $T^n$-translates, form a basis for the topology of $G$.
This completes the proof
of the claim that the relative topology on $G$ is the 
left-half-open interval Sorgenfrey topology on $\T$.

Similarly we see that for each $t \in [-1,1]$ the topology induced on the group $v_tG$
is the right-half-open interval Sorgenfrey topology on $\T$.

\br

Thus for $t, s \in [-1,1]$ the  map
$$
L_{v_t} \colon v_s G \to v_tG,   \quad  v_sg_\ga  \mapsto  v_tg_\ga
$$
is a topological isomorphism, while for $u$ and $t$ the map
$$
L_u \colon v_t G \to G,    \quad  v_tg_\ga  \mapsto - g_\ga
$$
is a topological isomorphism.
\end{exa}

\begin{question} \label{pr:2-idemp}
	Is there a minimal dynamical system $(X,T)$ such that in 
	$E(X,T)$ there is
	a minimal left ideal $M$ and two idempotents $u, v \in M$ such that
	the groups $uM$ and $vM$ (with the induced topology from $M$) are not homeomorphic ?
\end{question}

\br  
\section{The $\beta$-rank of a tame system}
\sk

%
%

Let $(X,T)$ be a metric (tame) dynamical system, $E = E(X,T)$ its enveloping semigroup.
For $p \in E$ define the the {\em oscillation function of $p$ at $x \in X$} as
$$
\osc(p, x) = \inf \{\sup_{x_1, x_2 \in V} d(px_1, px_2) : V \subset X \ {\text{open}} \ , x \in V\}
$$ 
and similarly, for $A$ a subset of $X$ with $x \in A$,
$$
\osc(p, x, A) = \osc(p \rest A, x).
$$ 

Consider, for each $\ep >0$, the derivative operation 
$$
A \mapsto A'_{\ep,p} = \{x \in A : \osc(p, x, A) \ge \ep\}
$$
and by iterating define $A^\al_{\ep,p}$ for $\al < \om_1$.
Let
$$
\beta(p, \ep, A) = 
\begin{cases}
{\text{least}} \ \al \ {\text{with}}\  A^\al_{\ep,p} = \emptyset, & {\text{if such an $\al$ exists}}\\
\om_1 & {\text{otherwise}},
\end{cases}
$$
Set
$\beta(p, \ep) = \beta(p, \ep, X)$ and define the 
{\em oscillation rank} $\beta(p)$ of $p$ by
$$
\beta(p) = \sup_{\ep > 0} \beta(p,\ep).
$$
Note that $p$ is continuous iff $\beta(p) = 1$.

Finally define the {\em $\beta$-rank of the system $(X,T)$} as the ordinal
$$
\beta(X,T) = \sup \{\beta(p) : p \in E\}.
$$

The following claim follows from \cite{Bour-80}, and 
\cite[Theorem 3, Theorem 8]{Ke-L} 
(where the 
$\al, \beta$ and $\ga$ ranks are shown to be essentially equivalent for bounded Baire class 1 functions).

\begin{claim}
For a metric tame dynamical system $(X,T)$ we have
$\beta(X,T) < \om_1$.
\end{claim}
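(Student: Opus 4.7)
The plan is to reduce the claim to Bourgain's uniform-boundedness theorem for oscillation ranks of Baire class $1$ functions in a Rosenthal compactum. First I would recall that by the dynamical BFT dichotomy (Theorem~\ref{f:DynBFT}), tameness of the metric system $(X,T)$ is equivalent to $E := E(X,T)$ being a separable Rosenthal compactum in $X^X$, so every $p \in E$ is of Baire class $1$ as a map $X \to X$.

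Next I would reduce to scalar-valued coordinates. Fix a countable family $\{\phi_k\}_{k \ge 1} \subset C(X,[0,1])$ separating points of $X$ (e.g.\ distance functions $\phi_k(x) = d(x,x_k)$ for a countable dense $\{x_k\} \subset X$). For each $k$ the pointwise-continuous map $E \to [0,1]^X$, $p \mapsto \phi_k \circ p$, has image $E_k := \{\phi_k \circ p : p \in E\}$, which is pointwise compact and separable; since $(X,T)$ is tame each continuous $\phi_k$ is a tame function, so $E_k$ is a Rosenthal compactum of bounded Baire class $1$ functions $X \to [0,1]$. I would then invoke Bourgain's theorem (\cite{Bour-80}, via the equivalence of ranks in \cite[Theorems~3 and~8]{Ke-L}) to conclude
\[
\gamma_k \;:=\; \sup_{p \in E} \beta(\phi_k \circ p) \;<\; \omega_1 \qquad (k \ge 1),
\]
and set $\gamma := \sup_k \gamma_k$, still countable as a countable supremum of countable ordinals.

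The last step lifts the scalar bound to a bound on $\beta(p)$ for $p \colon X \to X$. Given $\epsilon > 0$, compactness of $X$ and the separating property of $\{\phi_k\}$ provide a finite subfamily $\{\phi_{k_1},\ldots,\phi_{k_N}\}$ and $\eta = \eta(\epsilon) > 0$ such that $d(y,y') \ge \epsilon$ forces $|\phi_{k_j}(y) - \phi_{k_j}(y')| \ge \eta$ for some $j \le N$. Combined with the first countability of $X$ and pigeon-holing along a nested neighborhood basis at each point of $X^\alpha_{\epsilon, p}$, a transfinite induction on $\alpha$ yields the inclusion $X^\alpha_{\epsilon,p} \subseteq \bigcup_{j=1}^{N} X^\alpha_{\eta, \phi_{k_j} \circ p}$ for every ordinal $\alpha$. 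Setting $\alpha = \gamma$ makes each term on the right empty by the choice of $\gamma$, so $\beta(p,\epsilon) \le \gamma$ for every $p \in E$ and every $\epsilon > 0$, whence $\beta(X,T) \le \gamma < \omega_1$.

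The hard part will be the transfinite induction at successor ordinals: the index $k_j$ witnessing oscillation at a given point of $X^{\alpha+1}_{\epsilon,p}$ depends a priori on the neighborhood chosen, and one must extract a single $j$ working along a cofinal nested neighborhood sub-basis while simultaneously ensuring that the witnessing pairs lie in the corresponding scalar $\alpha$-derivative rather than merely in the $X$-valued $\alpha$-derivative. If this reduction turns out to be too delicate, the cleaner alternative is to bypass the scalar detour and invoke directly the $Y$-valued form of Bourgain's theorem, to the effect that any pointwise compact family of Baire class $1$ maps $X \to Y$ ($Y$ Polish) with no independent subsequence has uniformly countable $\beta$-rank; this form follows from the standard tree/obstruction argument combined with Rosenthal's $\ell^1$ dichotomy applied to $Y$-valued sequences, and it applies immediately to $E \subset X^X$.
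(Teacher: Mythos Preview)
The paper does not actually prove this claim; it simply records that it follows from Bourgain's boundedness theorem \cite{Bour-80} together with the equivalence of the $\alpha,\beta,\gamma$ ranks in \cite{Ke-L}. Your identification of these references and of the mechanism (uniform countable rank over a Rosenthal compactum of Baire class~1 maps) matches the paper exactly.

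Your scalar reduction, however, has the gap you yourself flag, and it is genuine: the inclusion
\[
X^{\alpha}_{\epsilon,p}\ \subseteq\ \bigcup_{j=1}^{N} X^{\alpha}_{\eta,\ \phi_{k_j}\circ p}
\]
does \emph{not} survive the successor step. If $x\in X^{\alpha+1}_{\epsilon,p}$, the witnessing pairs $x_1,x_2$ lie in $X^{\alpha}_{\epsilon,p}$, hence (by induction) in \emph{some} $X^{\alpha}_{\eta,\phi_{k_{j_1}}\circ p}$ and $X^{\alpha}_{\eta,\phi_{k_{j_2}}\circ p}$; but nothing forces $j_1=j_2$, nor forces either to equal the index $j$ that detects $|\phi_{k_j}(px_1)-\phi_{k_j}(px_2)|\ge\eta$, nor forces $x$ itself to lie in that same $X^{\alpha}_{\eta,\phi_{k_j}\circ p}$. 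Pigeonholing along a countable neighborhood base fixes only one of these three indices at a time. So the induction, as written, fails.

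Your fallback is the right move and is what the paper's bare citation implicitly invokes: Bourgain's tree argument in \cite{Bour-80} is insensitive to the target space and applies verbatim to families of Baire class~1 maps $X\to Y$ with $Y$ Polish (the oscillation derivative only uses the metric on $Y$), giving a uniform countable bound over any separable pointwise-compact family with no independent subsequence. Applied to $E(X,T)\subset X^X$, which is Rosenthal by Theorem~\ref{f:DynBFT}, this yields $\beta(X,T)<\omega_1$ directly. I would drop the scalar detour and state this $Y$-valued version as the argument.
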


\begin{claim}
For the Sturmian system we have $\beta(X,T) = 2$.
\end{claim}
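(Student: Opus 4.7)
The plan is to use the explicit description of $E(X,T)$ from the proof of Proposition \ref{p:AlmostAut}, namely
$$
E(X,T) = \{T^n : n\in\Z\} \cup \{p_\gamma^\pm : \gamma \in \T\},
$$
where $p_\gamma^+(\theta^\pm) = (\theta+\gamma)^+$ and $p_\gamma^-(\theta^\pm) = (\theta+\gamma)^-$. Since each $T^n$ is a homeomorphism we immediately have $\beta(T^n) = 1$, so the task reduces to showing that $\beta(p_\gamma^\pm) = 2$ for every $\gamma \in \T$; by symmetry it suffices to handle $p := p_\gamma^+$.

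A direct inspection of $p$, using the basis of neighborhoods recalled in the proof of Proposition \ref{p:AlmostAut}, will show that $p$ is continuous at every $\theta^+$ and at every $\theta^-$ with $\theta+\gamma \notin \{m\alpha : m\in\Z\}$, while at each of the (countably many) remaining points $(m\alpha-\gamma)^-$ we have
$$
\osc(p,(m\alpha-\gamma)^-) \;=\; d\bigl((m\alpha)^-,(m\alpha)^+\bigr) \;=:\; \delta_m \;>\; 0.
$$
In particular $p$ is discontinuous, so $\beta(p) \geq 2$.

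The main step is the following \emph{Key Lemma}: for every $\ep>0$, the set $\{m \in \Z : \delta_m \geq \ep\}$ is finite. Granting it, $X'_{\ep,p}$ is a finite subset of $X$ for every $\ep>0$; since a finite subspace of $X$ is discrete in the relative topology, the restriction $p \rest X'_{\ep,p}$ has oscillation zero at each of its points, and hence $X^2_{\ep,p} = \emptyset$. Thus $\beta(p,\ep) \leq 2$ for every $\ep$, giving $\beta(p) \leq 2$, and together with $\beta(p) \geq 2$ the equality $\beta(p) = 2$; taking the supremum over $E(X,T)$ yields $\beta(X,T) = 2$.

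The Key Lemma is the main obstacle and I plan to prove it by a compactness argument. Suppose, toward a contradiction, that $\delta_{m_i} \geq \ep$ for infinitely many distinct $m_i$; passing to a subsequence, $(m_i\alpha)^-\to x^-$ and $(m_i\alpha)^+\to x^+$ in $X$ with $d(x^-,x^+)\geq \ep$. Both points project via $\pi \colon X \to \T$ to $\theta := \lim m_i\alpha$, so $|\pi^{-1}(\theta)|\geq 2$ forces $\theta = k\alpha$ for some $k\in\Z$. Passing to a further subsequence I may assume $m_i\alpha$ approaches $k\alpha$ from one fixed side in $\T$; the explicit basis for the topology of $X$ at $(k\alpha)^\pm$ then shows that both $(m_i\alpha)^-$ and $(m_i\alpha)^+$ converge in $X$ to the same endpoint of $\pi^{-1}(k\alpha)$, contradicting $d(x^-,x^+)\geq \ep$.
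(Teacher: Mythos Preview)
Your proof is correct and follows the same line as the paper's: both hinge on the finiteness of $\{x \in X : \osc(p,x) \geq \ep\}$ for each fixed $\ep$, which makes $X'_{\ep,p}$ finite and hence $X^2_{\ep,p} = \emptyset$. The paper is terser---it computes $\beta$ only for the idempotents $u_\pm$ and then invokes the right-translation automorphisms $r_\ga$ of $\Adh(X,T)$ to transport the result to every $p_\ga^\pm$---while you treat general $\ga$ directly and supply the compactness argument for the Key Lemma that the paper leaves as an ``observation''. One small wording slip: when $\ga \notin \Z\al$ the point $(m\al-\ga)^+$ coincides with $(m\al-\ga)^-$, so your assertion that $p$ is continuous at \emph{every} $\theta^+$ is not literally true; but your identification of the discontinuity set and the oscillation values $\delta_m$ is correct, and that is all the argument actually uses.
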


\begin{proof}
In order to compute $\beta(u_+)$ we observe that for a fixed $\ep >0$ there are only
finitely many points, all of the form $x=T^nx_0^-$, where $\osc(u_+, x) \ge \ep$.
Thus $\beta(u_+, \ep) =2$ for every $\ep >0$, hence $\beta(u_+)=2$.
Similarly $\beta(u_-)=2$ and, as $\beta(pg_\ga)=\beta(r_\ga(p)) = \beta(p)$
for every $p \in \Adh(X,T)$ and $\ga \in G$, we conclude that $\beta(p) =2$
for every $p \in \Adh(X,T)$. Of course $\beta(T^n) = 1$ and we conclude that $\beta(X,T) =2$,
as claimed.
\end{proof}

A similar proof will show that:

\begin{claim}
For  the ``$\cos(\frac{1}{x})$-system" $(X,T)$ (Example \ref{cos}) we have
$\beta(X,T) < \om_1$.
\end{claim}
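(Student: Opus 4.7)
The plan is to imitate the Sturmian computation and in fact show that $\beta(X,T)=2$, which is more than enough. Example \ref{cos} gives a complete description of $E(X,T)$: beyond the discrete subset $\{T^n:n\in\Z\}$, every $p\in\Adh(X,T)=M$ has the form $p=v_\epsilon g_\ga$ with $\epsilon\in\{2\}\cup[-1,1]$ and $g_\ga\in G$. Each $T^n$ is a homeomorphism, hence of oscillation rank $1$, so the task reduces to bounding $\beta(p)$ for $p\in M$.

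First I would fix a minimal idempotent $v_\epsilon\in J$ and locate its discontinuities. By the explicit formulas in Example \ref{cos}, $v_\epsilon x=x$ whenever $\pi(x)\notin\Ocal_{R_\al}(0)$, and $v_\epsilon(T^j x_\eta)=T^j x_\epsilon$ for every $\eta\in\{2\}\cup[-1,1]$ and $j\in\Z$. Using the explicit bases $V_k(x_2)$, $V_{k,l}(x_t)$, $V_k(x)$ recalled in Example \ref{cos}, a direct check shows that $v_\epsilon$ is continuous at every $x\notin\pi^{-1}(\Ocal_{R_\al}(0))$, while at $T^j x_\eta$ with $\eta\neq\epsilon$ the oscillation is exactly $d(T^jx_\epsilon,T^jx_\eta)$.

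Next, for fixed $\delta>0$, I would describe the first derivative $X'_{\delta,v_\epsilon}$. Identifying the fiber $\pi^{-1}(0)\setminus\{x_2\}$ with $[-1,1]$, this derivative is a countable union, over $j\in\Z$, of sets of the form $\{T^j x_\eta: d(x_\epsilon,x_\eta)\ge\delta\}$, together with those $T^j x_2$ whose oscillation is at least $\delta$. The central observation is that $v_\epsilon$ restricted to $X'_{\delta,v_\epsilon}$ is locally constant: inside any basic neighborhood $V_{k,l}(x_\eta)\cap X'_{\delta,v_\epsilon}$ we only see fiber points, all of which $v_\epsilon$ sends to the single point $T^j x_\epsilon$; and each $T^j x_2$ is isolated inside $X'_{\delta,v_\epsilon}$ because its left-sided basic neighborhoods consist of generic (hence continuity) points not lying in $X'_{\delta,v_\epsilon}$. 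Thus $(X'_{\delta,v_\epsilon})'_{\delta,v_\epsilon}=\emptyset$, giving $\beta(v_\epsilon,\delta)=2$ for every $\delta>0$, and so $\beta(v_\epsilon)=2$.

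For a general $p=v_\epsilon g_\ga\in M$ I would invoke the automorphism $r_\ga\in\Aut(M,T)$ exactly as in the Sturmian computation, concluding $\beta(v_\epsilon g_\ga)=\beta(r_\ga(v_\epsilon))=\beta(v_\epsilon)=2$. Combined with $\beta(T^n)=1$, this yields $\beta(X,T)=2<\om_1$. The main obstacle is ensuring that the $\cos(1/\theta)$ oscillation, which manifests as the \emph{continuum} of fiber points $\{x_t:t\in[-1,1]\}$ sitting over $0$, does not require a further iteration of the derivative. This is handled by the key feature of the idempotents $v_\epsilon$: no matter how complicated the topological structure of $\pi^{-1}(\Ocal_{R_\al}(0))$ is, $v_\epsilon$ collapses each whole fiber to a single point, and the restriction of $v_\epsilon$ to $X'_{\delta,v_\epsilon}$ becomes trivial in one step.
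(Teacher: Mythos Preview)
Your strategy matches the paper's intent exactly (the paper gives no proof beyond ``a similar proof will show'') and you go further, aiming for the sharper $\beta(X,T)=2$. There is, however, a gap in the ``locally constant'' step. You assert that $V_{k,l}(x_\eta)\cap X'_{\delta,v_\epsilon}$ contains only fiber points over $0$, all sent by $v_\epsilon$ to the single point $x_\epsilon$. But $V_{k,l}(x_\eta)$ contains all of $\pi^{-1}(0,1/k)$ whose $0$-th coordinate is near $\eta$, and since the orbit $\{j\alpha\}$ is dense in $\T$ this set picks up entire fibers $\pi^{-1}(j\alpha)$ for infinitely many $j$ (whenever $j\alpha\in(0,1/k)$ and $F(j\alpha)$ is near $\eta$, which happens because $\cos(2\pi/\theta)$ oscillates as $\theta\to 0^+$). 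Those points are mapped by $v_\epsilon$ to $T^j x_\epsilon\neq x_\epsilon$, so $v_\epsilon$ is not locally constant on the set you describe. Your own description of $X'_{\delta,v_\epsilon}$ as a union over \emph{all} $j\in\Z$ already signals the tension. The missing ingredient is the asymptotic nature of $\pi$: since $x_\epsilon$ and $x_\eta$ differ only in the $0$-th shift coordinate, $T^jx_\epsilon$ and $T^jx_\eta$ differ only in coordinate $-j$, whence $\diam(T^j\pi^{-1}(0))\to 0$ as $|j|\to\infty$. Thus for each fixed $\delta>0$ only \emph{finitely many} fibers contribute to $X'_{\delta,v_\epsilon}$; for $k$ large these finitely many fibers are excluded from $V_{k,l}(x_\eta)$, and \emph{then} your locally-constant claim goes through.

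A smaller point: the identity $\beta(v_\epsilon g_\gamma)=\beta(v_\epsilon)$ via $r_\gamma\in\Aut(M,T)$ is exactly how the paper argues in the Sturmian case, but note that $r_\gamma$ is a self-homeomorphism of $M$, not of $X$, so it does not transparently transport oscillation ranks of elements viewed as maps $X\to X$. This can be bypassed directly: for $\gamma=k\alpha$ one checks $v_\epsilon g_\gamma=v_\epsilon T^k$, and precomposition with the homeomorphism $T^k$ does preserve $\beta$; for $\gamma$ off the orbit the discontinuity set of $p=v_\epsilon g_\gamma$ is the discrete set $\{x_{n\alpha-\gamma}:n\in\Z\}$ of generic points (each fiber $\pi^{-1}(m\alpha)$ is collapsed to the singleton $x_{m\alpha+\gamma}$, so $p$ is continuous there), and the same ``finitely many with oscillation $\ge\delta$'' count gives $\beta(p)=2$ at once.
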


\begin{claim}
The Dynkin-Maljutov system $(\Om, F_2)$ (Example \ref{dynkin}) has $\beta$-rank $2$.
\end{claim}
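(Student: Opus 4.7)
The plan is to describe the members of $\Adh(\Om,F_2)$ explicitly using the convergence group property, observe that each such element has at most one discontinuity point, and then compute the oscillation rank directly.

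First I would recall from the discussion around Theorem \ref{uniform} and the characterization in Corollary \ref{grom} that $F_2$ acts on $\Om=\partial F_2$ as a uniform convergence group. In particular, for any wandering sequence $\gamma_n \in F_2$ we can extract a subsequence $\gamma_{n_k}$ and a pair of (possibly equal) points $a,b \in \Om$ such that $\gamma_{n_k} x \to a$ uniformly on compact subsets of $\Om \setminus \{b\}$, while $\gamma_{n_k} b$ converges to some point $c \in \Om$. Passing to pointwise limits in $E(\Om,F_2)$, it follows that every $p \in \Adh(\Om,F_2)$ has the form
\[
p(x)=a \ \ (x\in \Om\setminus\{b\}), \qquad p(b)=c,
\]
for some triple $(a,b,c)\in \Om^3$. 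Thus $p$ is continuous on $\Om\setminus\{b\}$, and the only possible discontinuity is at the single point $b$, where the oscillation equals $d(a,c)$.

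Next I would use this description to compute $\beta(p)$ for each $p\in E(\Om,F_2)$. If $p=\gamma\in F_2$, then $p$ is continuous and $\beta(p)=1$. If $p\in \Adh(\Om,F_2)$ is of the form above with $c=a$ (the parabolic case), then $p$ is the constant map, hence continuous, hence $\beta(p)=1$. If instead $c\neq a$ (the generic/loxodromic type), fix $\ep>0$. When $\ep > d(a,c)$ the set $X'_{\ep,p}$ is empty and $\beta(p,\ep)=1$. When $0<\ep\le d(a,c)$ we have $X'_{\ep,p}=\{b\}$, which is isolated in itself (being a single point in $X=\Om$), so one further derivative yields $(X'_{\ep,p})'_{\ep,p}=\emptyset$ and $\beta(p,\ep)=2$. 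Taking the supremum over $\ep$ gives $\beta(p)=2$.

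Combining these cases, $\beta(p)\le 2$ for every $p\in E(\Om,F_2)$, and the value $2$ is attained, for instance by any loxodromic idempotent $p_{(a,b)}$ whose existence is guaranteed by Theorem \ref{uniform}. Hence $\beta(\Om,F_2)=\sup_{p\in E}\beta(p)=2$, as claimed. The only mildly subtle step is the initial claim that every element of the adherence semigroup is of the trichotomous form $(a,b,c)$; this is where the uniform convergence group hypothesis does the essential work, and I would quote (or briefly re-derive) the relevant convergence statement rather than grind through the combinatorics of reduced words on $a,b,a^{-1},b^{-1}$.
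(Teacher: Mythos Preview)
Your proof is correct and follows essentially the same approach as the paper's: classify the elements of $E(\Om,F_2)$ into those in $F_2$ (continuous, rank $1$), constant limits (rank $1$), and limits with a single discontinuity point (rank $2$), then take the supremum. Your version is actually more careful than the paper's terse argument, since you explicitly describe every $p\in\Adh(\Om,F_2)$ via the triple $(a,b,c)$ coming from the convergence property, whereas the paper only names the parabolic and loxodromic \emph{idempotents} and leaves implicit that all other adherence elements have the same discontinuity structure.
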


\begin{proof}
We have $\beta(t) =1$ for every $t \in F_2$, $\beta(p) =1$ for a parabolic $p \in \Adh(\Om, F_2)$,
and $\beta(p) =2$ for a loxodromic $p$. Thus $\beta(\Om, F_2) =2$ as claimed.
\end{proof}

\br

The Sturmian system is obtained by considering a cover of $\T$ by the two closed sets
$[0, \al]$ and $[\al, 1]$. In \cite{Pa} Paul shows that, in fact, 
every almost automorphic {\bf symbolic} minimal system arises from 
a finite separating cover of its maximal equicontinuous factor.
Both functions $\ch_{[0,\al]}$ (in the Sturmian system) and the function 
$F$ (in the $\cos(\frac{1}{x})$-system) are semicontinuous, hence with $\beta$-rank $2$
(as Baire class 1 functions),
and, as we have seen, the $\beta$-rank of these dynamical systems is $2$.
We will show next that for 
some separating cover $A_0, A_1$ of the circle by two closed sets
(so that again the $\beta$ rank of the function $\ch_{A_0}$ is $2$),
the corresponding almost automorphic system $(X,T)$ is not even tame,
so that $\beta(X,T) = \om_1$.

The following propositions and theorem are proved in 
\cite[Proposition 3.5, Proposition 3.6, Corollary 3.7]{FGJO}.

\begin{prop}
Let $(\T, R_\al)$ be the rotation system with $\al$ irrational and let $B \subset \T$ be a residual subset. 
Let $A_0, A_1 \subset \T$ be two closed subsets that satisfy
\begin{enumerate}
\item
$\cls\inte(A_0) = A_0$ and $\cls\inte(A_1) = A_1$.
\item
$\inte(A_0) \cap \inte(A_1) = \emptyset$.
\item
$C = \partial A_0 = \partial A_1$ is a Cantor set.
\end{enumerate}
Then there exists an infinite subset $I \subset \Z$ such that for every $a \in \{0,1\}^I$,
there is a point $x \in B$ with $T^ix  = x +i\al \in \inte(A_{a(i)})$ for every $i \in I$.
\end{prop}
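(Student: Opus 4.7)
Since $B$ is residual, fix a decreasing sequence of dense open sets $O_n \subset \T$ with $\bigcap_n O_n \subset B$. The plan is to construct, by induction on $n \in \N$, integers $i_1 < i_2 < \cdots$ and a Cantor scheme of nonempty open sets $\{V_s\}_{s \in 2^{<\N}}$ in $\T$ satisfying:
\begin{itemize}
\item[\textup{(i)}] $\cls V_{s^\frown 0} \cup \cls V_{s^\frown 1} \subset V_s$ and $\cls V_{s^\frown 0} \cap \cls V_{s^\frown 1} = \emptyset$;
\item[\textup{(ii)}] $\diam V_s < 2^{-|s|}$;
\item[\textup{(iii)}] $V_s + i_k \alpha \subset \inte(A_{s_k})$ for every $k \le |s|$;
\item[\textup{(iv)}] $V_s \subset O_n$ for every $n \le |s|$.
\end{itemize}
Once this is done, for each $a \in \{0,1\}^\N$ the nested compact sets $\cls V_{a \rest n}$ have vanishing diameter and so meet in a single point $x_a$. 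By (iv), $x_a \in \bigcap_n O_n \subset B$; by (iii), $x_a + i_k\alpha \in \inte(A_{a_k})$ for every $k$. Setting $I = \{i_k : k \in \N\}$ and identifying $a$ with an element of $\{0,1\}^I$ then gives the conclusion.

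For the inductive step at stage $n+1$, I would also maintain, as a working invariant, that each $V_s$ (with $s \in 2^n$) is chosen as an open neighborhood of some designated point $c_s \in V_s \cap C$. The irrationality of $\alpha$ (density of $\alpha\Z$ in $\T$) permits choosing $i_{n+1} \in \Z$ with $\|i_{n+1}\alpha\|$ so small that $c_s \in V_s + i_{n+1}\alpha$ for every $s \in 2^n$ at once. Because $c_s \in C = \partial A_0 = \partial A_1$ and $A_e = \cls \inte(A_e)$ by hypothesis~(1), the open neighborhood $V_s + i_{n+1}\alpha$ of $c_s$ meets both $\inte(A_0)$ and $\inte(A_1)$, so $V_s \cap R_\alpha^{-i_{n+1}}(\inte(A_e)) \neq \emptyset$ for each $e \in \{0,1\}$. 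Inside each of these two disjoint open subsets of $V_s$ one selects a small open neighborhood $V_{s^\frown e}$ of a new point $c_{s^\frown e} \in C$, further shrunk to lie in $O_{n+1}$ and to have diameter $< 2^{-(n+1)}$, thereby verifying (i)--(iv) at the next stage.

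The main obstacle is preserving the working invariant: one must find, within the perfect set $V_s \cap C$, points $c_{s^\frown 0}$ and $c_{s^\frown 1}$ in $C$ whose $R_\alpha^{i_{n+1}}$-images lie in $\inte(A_0)$ and $\inte(A_1)$ respectively. Here the Cantor-set hypothesis on $C$ is essential: $V_s \cap C$ is a nonempty relatively open subset of the Cantor set $C$, hence itself a perfect uncountable set whose two-sided accumulation points are dense (the one-sided points of $C$---endpoints of complementary arcs---form only a countable subset). Combining this abundance with the freedom to choose $i_{n+1}$ among integers with $\|i_{n+1}\alpha\|$ small, one can arrange that the translate $(V_s \cap C) + i_{n+1}\alpha$ meets both $\inte(A_0)$ and $\inte(A_1)$. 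Handling all $2^n$ parent sets simultaneously is feasible because the full constraint on $i_{n+1}$ is a finite conjunction of open conditions on the single quantity $i_{n+1}\alpha \in \T$, which by Kronecker density is satisfied by infinitely many integers.
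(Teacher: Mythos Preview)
The paper does not prove this proposition; it simply quotes it from \cite{FGJO}. So there is no ``paper's proof'' to compare against, and your Cantor-scheme strategy is a natural approach that is likely close in spirit to the original.

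Your branching mechanism can be made rigorous, though the last paragraph is too vague as written. The precise fact you need is: any open interval $V$ with $V\cap C\neq\emptyset$ contains a complementary arc of each type whose closure lies in $V$ (this follows from hypothesis (1), the equality $\partial A_0=\partial A_1=C$, and perfectness of $C$). Taking the left endpoint $\ell_{s,e}\in V_s\cap C$ of such a type-$e$ arc $J_{s,e}\subset V_s$, one has $\ell_{s,e}+\delta\in\inte(A_e)$ for every $0<\delta<|J_{s,e}|$; so any $i_{n+1}$ with $i_{n+1}\alpha\in(0,\min_{s,e}|J_{s,e}|)$ works for all $2^n$ parent sets at once. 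This replaces your hand-waving about ``abundance'' and ``open conditions''.

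There is, however, a genuine incompatibility between your condition (iv) and your working invariant $c_s\in V_s\cap C$. Consider the residual set $B=\T\setminus C$. Any sequence of dense open sets $O_n$ with $\bigcap_n O_n\subset B$ must satisfy $C\subset\bigcup_n(\T\setminus O_n)$; by the Baire category theorem applied inside the complete space $C$, some $\T\setminus O_n$ has nonempty interior relative to $C$, i.e.\ $O_n\cap C$ misses a nonempty relatively open subset $U$ of $C$. As soon as some $V_s\cap C\subset U$ (which your shrinking diameters will eventually force), the step ``further shrunk to lie in $O_{n+1}$'' destroys the invariant: no open subset of $V_s\cap O_n$ can meet $C$. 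In particular, for $B=\T\setminus C$ your scheme produces limit points $x_a$ that lie in $C$ (since $V_{a\upharpoonright n}\cap C\neq\emptyset$ and $\diam V_{a\upharpoonright n}\to 0$), hence \emph{outside} $B$. You need a different device to land in $B$ --- for instance, arguing that the full realization set $X_a=\bigcap_k R_\alpha^{-i_k}(\inte(A_{a_k}))$ is large enough to meet any residual set, rather than pinning down a single point via (ii) and (iv).
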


Now let $F= \ch_{A_1}$ i.e. 
$$
F(\theta) =
\begin{cases}
1 & \theta \in A_1,\\
0 & {\text{otherwise}}.
\end{cases}
$$
Choose a point $y_0 \in \inte{A_1}$ and let $x_0 = f \in \{0,1\}^\Z$
be defined by $x_0(n) = f(n) = F(y_0 + n\al), \ (n \in \Z)$.
We set $X = \cls \{T^n x_0 : n \in \Z\}$, where $T$ is the shift map on $\{0,1\}^\Z$.

The dynamical system $(X,T)$ is almost automorphic with a maximal equicontinuous factor $(\T, R_\al)$,
where the factor map $\pi \colon (X,T) \to (\T, R_\al)$ is an almost one-to-one extension and, in particular,
$\pi^{-1}(y_0) = \{x_0\}$. It is also a cut and project system as follows (see \cite{Pa}, \cite{BJL}):

\begin{prop}
The almost automorphic subshift $(X,T)$ is isomorphic to the cut and project dynamical system 
$(\Om(\curlywedge(W)),\Z)$ obtained from the 
cut and project scheme $(\Z,\T,L)$, with lattice $L = \{(n, n\al) : n \in \Z\} \subset \Z \times \T$, where
\begin{itemize}
\item
$0 \in\T$ has a unique preimage under the factor map $\beta \colon (\Om(\curlywedge(W)),\Z) \to  (\T, R_\al)$.
\item 
$W = \beta([1])$, where $[1] = \{\xi \in \{0,1\}^\Z  : \xi(0) =1\}$.
\end{itemize}
Moreover, the window $W$ is proper, that is, $\inte(W ) = W$ .
\end{prop}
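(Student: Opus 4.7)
The plan is to recognize $(X,T)$ as the coding hull of a model set for the explicit scheme $(\Z, \T, L)$ and verify the asserted properties by dictionary translation. Recall the formalism: for a window $W \subset \T$, each parameter $t \in \T$ gives a model set $\La(t) = \{n \in \Z : t + n\al \in W\}$, coded as $\xi_t \in \{0,1\}^\Z$ via $\xi_t(n) = \ch_W(t + n\al)$. The hull $\Om(\curlywedge(W))$ is the closure in $\{0,1\}^\Z$ of the $\Z$-orbit $\{T^n \xi_0 : n \in \Z\}$, and $\beta$ is the unique continuous $\Z$-equivariant extension of $T^n \xi_0 \mapsto n\al$.

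Set $W := A_1 - y_0$. Then
$$
x_0(n) = F(y_0 + n\al) = \ch_{A_1}(y_0 + n\al) = \ch_W(n\al) = \xi_0(n),
$$
so $x_0 = \xi_0$ and consequently $X = \cls\{T^n x_0 : n \in \Z\} = \Om(\curlywedge(W))$ as subshifts. Under this identification, $\pi$ agrees with $\beta$ after composition with the translation $t \mapsto t - y_0$ of $(\T, R_\al)$, which is a dynamical automorphism of the rotation system. One may absorb this translation into the choice of basepoint for the lattice to recover the statement exactly as written.

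The two bulleted properties and the properness of $W$ then follow. The condition that $0 \in \T$ has a unique $\beta$-preimage translates to $\pi^{-1}(y_0) = \{x_0\}$, which holds because $y_0 \in \inte(A_1)$ forces $F$ to be continuous at $y_0$: any sequence $n_i$ with $n_i\al \to 0$ yields $T^{n_i}x_0 \to x_0$ coordinatewise. For $W = \beta([1])$, the inclusion $\beta([1]) \subseteq W$ follows from closedness of $W$ and continuity of $\beta$, while the reverse inclusion uses properness $\cls\inte(W) = W$ together with the observation that every $t \in \inte(W)$ is a limit $n_i\al \to t$ with $n_i\al \in \inte(W)$, hence the corresponding limit point $\xi \in \beta^{-1}(t)$ satisfies $\xi(0) = 1$. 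Properness itself is the direct transport of the hypothesis $\cls\inte(A_1) = A_1$ from the preceding proposition.

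The main obstacle is strictly bookkeeping: reconciling the two equivalent descriptions of the hull (orbit closure of the specific point $\xi_0$ versus closure of $\{\xi_t : t \in \T\}$) via minimality of $(\T, R_\al)$ and density of $\{n\al : n \in \Z\}$ in $\T$. No substantive new argument is needed beyond the verifications already present in the setup and the preceding proposition.
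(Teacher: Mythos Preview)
The paper does not supply its own proof of this proposition; it simply records that the result is \cite[Proposition 3.6]{FGJO}. Your proposal instead gives a direct verification, and the dictionary translation you set up (taking $W = A_1 - y_0$, identifying $x_0$ with $\xi_0$, and transporting $\pi$ to $\beta$ via a rotation of the base) is correct and is exactly how one unwinds the cut and project formalism in this example.

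One point needs tightening. To obtain $\beta^{-1}(0)=\{\xi_0\}$ you argue that $y_0 \in \inte(A_1)$ makes $F$ continuous at $y_0$, hence $T^{n_i}x_0 \to x_0$ whenever $n_i\al \to 0$. But coordinatewise convergence at coordinate $k$ requires continuity of $F$ at $y_0 + k\al$, not merely at $y_0$; so you really need the full $R_\al$-orbit of $y_0$ to miss $\partial A_1$. This is a meager condition, and the paper tacitly assumes it (it records $\pi^{-1}(y_0)=\{x_0\}$ just before the proposition), but your stated reason does not by itself justify it. Either strengthen the hypothesis on $y_0$ or simply cite the fact $\pi^{-1}(y_0)=\{x_0\}$ already established in the surrounding text. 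With that adjustment your argument is complete.
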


Now in this situation the following theorem applies.

\begin{thm}
Suppose that $(\Z,\T,L)$ is a cut and project scheme.
If $W$ is a proper window and 
$\partial W$ is a Cantor set, then the system $(\Om(\curlywedge(W)),\Z)$
admits an infinite free set $I \subset G$ i.e.
for every $a \in \{0,1\}^I$,
there is a point $x \in \Om(\curlywedge(W))$ with 
$x(i) =  a(i)$ for every $i \in I$.
Thus the system $(\Om(\curlywedge(W)),\Z)$ is not tame.
\end{thm}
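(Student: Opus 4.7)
The plan is to reduce the theorem to the two results stated immediately before it and then invoke the definition of tameness via Rosenthal independent sequences. First, I would translate the cut and project data into the indicator-function subshift framework of the discussion preceding the theorem. Set $A_1 := \cls W$ and $A_0 := \cls(\T \setminus W)$. Since $W$ is proper, a direct check shows that $\cls\inte(A_i) = A_i$ for $i = 0,1$, that $\inte(A_0) \cap \inte(A_1) = \emptyset$, and that $\partial A_0 = \partial A_1 = \partial W$, which by hypothesis is a Cantor set. Thus the pair $(A_0, A_1)$ satisfies the three hypotheses of the preceding proposition. Let $F = \chi_{A_1}$, and let $(X, T)$ be the subshift constructed from $F$ as in the paragraph above. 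By the intermediate proposition, $(X,T)$ is isomorphic to the cut and project system $(\Om(\curlywedge(W)),\Z)$, so it suffices to prove that $(X,T)$ admits an infinite free set and is not tame.

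Next, apply the preceding proposition with $B$ equal to the residual subset of $\T$ consisting of points $y$ whose fiber under the almost one-to-one factor map $\pi\colon X \to \T$ is a singleton (this subset is residual because $\pi$ is an almost one-to-one extension). The proposition produces an infinite $I \subset \Z$ such that for every $a \in \{0,1\}^I$ there exists $y \in B$ with
\[
y + i\alpha \in \inte(A_{a(i)}) \qquad \text{for every } i \in I.
\]
Let $x \in X$ denote the unique point above $y$. Because $y + i\alpha$ lies in the interior of $A_{a(i)}$, the value $F(y + i\alpha)$ equals $a(i)$, and by the definition of the coding we obtain $x(i) = a(i)$ for every $i \in I$. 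Hence $I$ is an infinite free set for the subshift $(X,T)$.

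Finally, the existence of such $I$ contradicts tameness. Consider the continuous function $f \in C(X)$ defined by $f(x) = x(0)$, so that $(f \circ T^i)(x) = x(i)$. The freeness of $I$ means that for every partition $I = I_0 \sqcup I_1$ there is a point $x \in X$ with $f(T^ix) = 0$ for $i \in I_0$ and $f(T^ix) = 1$ for $i \in I_1$. This says exactly that $\{f \circ T^i : i \in I\}$ is an independent sequence in the sense of Rosenthal, so $f$ is not tame; by the dynamical Bourgain--Fremlin--Talagrand dichotomy (Theorem~\ref{f:DynBFT}) the system $(\Om(\curlywedge(W)),\Z)$ is not tame.

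The main step requiring care is the first one: checking that the closed sets $A_0, A_1$ built from $W$ actually satisfy all three conditions of the preceding proposition, and in particular that $\partial A_0 = \partial A_1 = \partial W$. Once this identification is in place, the rest is a direct application of two earlier results together with the defining property of tame functions.
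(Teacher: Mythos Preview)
The paper does not supply its own proof of this theorem: together with the two preceding propositions it is quoted verbatim from \cite[Proposition~3.5, Proposition~3.6, Corollary~3.7]{FGJO}, and no argument is given here. So there is nothing in the paper to compare your proof against line by line; what you have written is essentially a reconstruction of how the corollary in \cite{FGJO} is meant to follow from the two propositions, and the overall strategy is the right one.

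One point does need care. The second proposition, as stated, runs in the direction \emph{subshift $\Rightarrow$ cut and project}: it starts from the subshift $(X,T)$ built out of $A_0,A_1$ and asserts that it is isomorphic to the cut and project system whose window is $W':=\beta([1])$. You are using it in the reverse direction: you begin with a given proper window $W$, set $A_1=\cls W$, build the subshift, and then assert that this subshift is isomorphic to $(\Om(\curlywedge(W)),\Z)$. For that you need the extra identification $\beta([1])=W$ (equivalently, that the symbolic coding of the cut and project system by its own window reproduces the system). This is true and standard for proper windows, but it is not literally the content of the second proposition as quoted; you should either justify it or cite it separately.

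A minor stylistic remark: once you have exhibited an infinite independent family $\{f\circ T^i : i\in I\}$ for $f(x)=x(0)$, you have shown directly that $f\notin\Tame(X)$ and hence $\Tame(X)\neq C(X)$; invoking Theorem~\ref{f:DynBFT} is unnecessary.
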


\sk 

\begin{question} \label{pr:rank}
Is there, for every ordinal $\al < \om_1$, a tame metric system $(X,T)$ with $\beta(X,T) = \al$ ?
Or, to be more modest, find a dynamical system with $\beta(X,T) =3$.
\end{question}

 \br 
\section{More open questions} 
\sk



A separable Banach space $V$ is Rosenthal if and only if its \textit{enveloping semigroup} 
$\E(V)$ is a (separable) Rosenthal compactum  
(see the BFT dichotomy for Banach spaces \cite[Theorem 5.22]{GM-AffComp}). 

	 
Another characterization, due to 
Odell and Rosenthal is, that a separable Banach space $V$ is Rosenthal if and only if every element 
$x^{**} \in B^{**}$ is a Baire class 1 function on $B^*$, iff $B^{**}$ is a separable Rosenthal compactum. 
One can ask what is the nature of the spaces $B^{**}$  in terms of the
 Todor\u{c}evi\'{c} classes mentioned in Theorem \ref{tri}. 
	
	Now, as  noted by Bourgain \cite{Bour77} the point $0$ is not a $G_{\delta}$ subset of $B^{**}$ for any Rosenthal space which is not Asplund. In particular, it follows that $B^{**}$ is not first countable (at $0$). 
	This implies that the enveloping semigroup $\E(V)$ of $V$ is not hereditarily separable (because, by \cite[Lemma 2.6.2]{GM-AffComp} the weak-star compact ball $B^{**}$ is a continuous image of $\E(V)$). 

\begin{question} \label{q:Ban}
	Is there a separable Rosenthal Banach space $V$ such that $\E(V)$ is first countable but not metrizable (equivalently, $V$ is not Asplund)~?
\end{question}

\begin{question} \label{q:dendr} 
	Every continuous topological group action on a dendrite $D$ is tame, \cite{GM-dendr}. Is it always tame$_\mathbf{1}$ ? 
\end{question}

This is the case for the simplest nontrivial dendrite $D=[0,1]$. 
This can be proved by a minor modification of the arguments in Example \ref{ex:Helly}.
Note that the circle $S^1$, which is a local dendrite, does admit an action of $\mathrm{GL}(d,\R)$
which is tame but not tame$_{\bf 1}$ (Example \ref{e:EllisProj}). 
See also Proposition \ref{ex:T}. 

\sk 

For other questions in the present work see:
Questions 
\ref{pr:G-delta}, \ref{pr:RosComp=E}, \ref{q-rigid}, \ref{q:abel},
\ref{q:c-ord},  \ref{q:21},
\ref{pr:2-idemp}  and \ref{pr:rank}.

 
 \br 
 
\bibliographystyle{amsplain}

\end{document}